\newcounter{lookcounter} \setcounter{lookcounter}{0}
\newcommand{\Hausdorff}{\mathrm{H}}  
\newcommand{\Dir}{\mathrm{D}}  
\newcommand{\Neu}{\mathrm{N}}  
\newtheorem{theorem}{Theorem}[section]
\newtheorem{lemma}[theorem]{Lemma}
\newtheorem{corollary}[theorem]{Corollary}
\theoremstyle{definition}
\newtheorem{remark}[theorem]{Remark}
\newtheorem{definition}[theorem]{Definition}
\newcommand{\HS}{\mathcal{H}}           
\newcommand{\A}{\mathcal{A}}    
\renewcommand{\a}{\mathfrak{a}}    
\newcommand{\clo}[1]{\overline{{#1}}} 
\newcommand{\abs}[2][{}]{\lvert{#2}\rvert_{{#1}}}    
\newcommand{\R}{\mathbb{R}} 
\newcommand{\N}{\mathbb{N}} 
\newcommand{\Z}{\mathbb{Z}} 
\renewcommand{\L}{\mathsf{L}_2} 
\renewcommand{\H}{\mathsf{H}^1} 
\newcommand{\Ho}{\mathsf{H}^1_0} 
\newcommand{\CC}{\mathsf{C}^{\infty}} 
\newcommand{\Id}{\mathrm{I}} 
\newcommand{\dd}{\, \mathrm d} 
\newcommand{\e}{_\eps}
\newcommand{\eps}{\varepsilon}
\newcommand{\ie}{_{i\eps}}
\newcommand{\I}{\mathcal{I}\e}
\renewcommand{\phi}{\varphi}
\newcommand{\ds}{\displaystyle}
\newcommand{\cupl}{\bigcup\limits}
\newcommand{\suml}{\sum\limits}
\newcommand{\intl}{\int\limits}
\newcommand{\maxl}{\max\limits}
\DeclareMathOperator{\supp}   {supp}
\DeclareMathOperator{\dom}    {dom}
\DeclareMathOperator{\dist}   {dist}
\DeclareMathOperator{\diam}   {diam}
\DeclareMathOperator{\capty}  {cap}
\newcommand{\pp}{{1\prime}}
\newcommand{\restr}[1]{{\restriction}_{#1}} 
\def\XXint#1#2#3{{\setbox0=\hbox{$#1{#2#3}{\int}$ }
\vcenter{\hbox{$#2#3$ }}\kern-.6\wd0}}
\begin{document}
\parskip=3mm
\parindent=0mm

\title[Operator estimates for the crushed ice problem]
{Operator estimates for the crushed ice problem}

\author{Andrii Khrabustovskyi}%
\address{Institut f\"ur Numerische Mathematik, Technische Universit\"at Graz, 8010 Graz, Austria} %
\email{khrabustovskyi@math.tugraz.at}

\author{Olaf Post}
\address{Fachbereich 4 -- Mathematik,
  Universit\"at Trier,
  54286 Trier, Germany}
\email{olaf.post@uni-trier.de}

\subjclass[2010]{Primary 58J50; Secondary 35B27, 35P15, 47A10}

\begin{abstract} 
  Let $\Delta_{\Omega\e}$ be the Dirichlet Laplacian in the domain
  $\Omega\e:=\Omega\setminus\left(\cup_i D_{i\eps}\right)$. Here
  $\Omega\subset\mathbb{R}^n$ and $\{D_{i\eps}\}_{i}$ is a family of
  tiny identical holes (``ice pieces'') distributed periodically in $\mathbb{R}^n$ with
  period $\eps$.  We denote by $\capty(D_{i\eps})$ the capacity of a
  single hole.  It was known for a long time that $-\Delta_{\Omega\e}$
  converges to the operator $-\Delta_{\Omega}+q$ in strong resolvent
  sense provided the limit $q:=\lim_{\eps\to 0}
  \capty(D_{i\eps})\eps^{-n}$ exists and is finite.  In the current
  contribution we improve this result deriving estimates for the rate
  of convergence in terms of operator norms. As an application, we
  establish the uniform convergence of the corresponding semi-groups
  and (for bounded $\Omega$) an estimate for the difference of the
  $k$-th eigenvalue of $-\Delta_{\Omega\e}$ and
  $-\Delta_{\Omega\e}+q$.  Our proofs relies on an abstract scheme for
  studying the convergence of operators in varying Hilbert spaces
  developed previously by the second author.  \bigskip

  \noindent{\emph{Keywords:} crushed ice problem; homogenization; norm
    resolvent convergence; operator estimates; varying Hilbert spaces}
\end{abstract}

\maketitle

\section{Introduction}
\label{sec:intro}

In the current work we revisit one of the classical problems in homogenization theory -- homogenization of the Dirichlet Laplacian in a domain with a lot of tiny holes. It is also known as \emph{crushed ice problem}.
Below, we briefly recall the setting of this problem and the main result. 

Let $\Omega$ be an open domain in $\R^n$ ($n\geq 2$) and
$\left\{D\ie\right\}_i$ be a family of small holes. The holes are
identical (up to a rigid motion) and are distributed evenly in
$\Omega$ along the $\eps$-periodic cubic lattice -- see
Figure~\ref{fig1}. We set
\begin{gather*}
  \Omega\e := \Omega \setminus\Bigl( \bigcup_{i}\clo{ D\ie}\Bigr).
\end{gather*}
The domain $\Omega\e$ is depicted in Figure~\ref{fig1}. More precise
description of this domain will be given in the next section.

\begin{figure}[h]
\begin{picture}(160,155)
\includegraphics{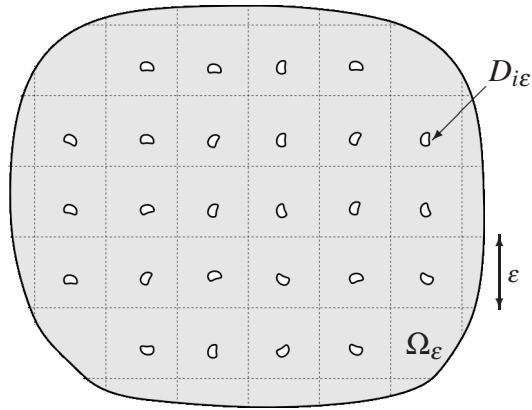}
\put(5,48){\vector(0,1){17}}
\put(5,48){\vector(0,-1){11}}
\put(8,43){$^\eps$}
\put(1,123){$D\ie$}
\put(0,121){\vector(-1,-1){20}}
\put(-30,20){$\Omega\e$}
\end{picture}
\caption{The domain $\Omega\e$ obtained from $\Omega$ by removing the
  obstacles $D\ie$.  To avoid technical problems with the boundary of
  $\Omega$, the obstacles are only placed into cells which lie
  entirely in $\Omega$.}
\label{fig1}
\end{figure}

In $\Omega\e$ we study the following problem:
\begin{gather*}
-\Delta_{\Omega\e} u\e + u\e = f\restr{\Omega\e},
\end{gather*}
where $\Delta_{\Omega\e}$ is the Dirichlet Laplacian in $\Omega\e$, $f\in \L(\Omega)$ is a given function, $f\restr{\Omega\e}$ is the restriction of $f$ to $\Omega\e$.
The goal is to describe the behaviour of the solution $u\e$ to this problem as $\eps\to 0$.

It turns out that the result depends on the limit $q:=\lim_{\eps\to
  0} \capty(D\ie)\eps^{-n}$ being finite or infinite (here
$\capty(D\ie)$ is the capacity of a single hole,
see~\eqref{capacity} for details). Namely, if $q=\infty$ then
$\|u\e\|_{\L(\Omega\e)}\to 0$ as $\eps\to 0$. Otherwise, if
$q<\infty$, $\|u\e-u\|_{\L(\Omega\e)}\to 0$ as $\eps\to 0$, where
$u$ is the solution to the problem
\begin{equation*}
  -\Delta_\Omega u +qu+ u = f.
\end{equation*}
This result was proven independently by V.A.~Marchenko,
E.Ya.~Khruslov~\cite{MK64} (the case $q<\infty$), J.~Rauch,
M.~Taylor~\cite{RT75} (the cases $q=0$ and $q=\infty$) and
D.~Cioranescu, F.~Murat~\cite{CM82} (all scenario) by using different
tools  --- potential theory, capacitary methods and variational approach (the so-called Tartar's energy method), respectively.  J.~Rauch and M.~Taylor also treated the case
of randomly distributed holes under assumptions resembling the case
$q>0$ in a deterministic case; the pioneer result in this direction
was obtained by M.~Kac in~\cite{Kac74}, who investigated the case of
{uniformly} distributed holes.

Note, that this result remains valid if on the external boundary
(i.e.\ on $\partial\Omega \setminus\left( \bigcup_{i} { \partial
    D\ie}\right)$) one imposes Neumann, Robin, mixed or any other
\emph{$\eps$-independent} boundary conditions (then $-\Delta_\Omega$
is the Laplace operator subject to these conditions on
$\partial\Omega$).

Besides the resolvent convergence one can study the convergence of
spectrum or the convergence of the semi-group
$\exp(\Delta_{\Omega\e}t)$.  In the later case the name \emph{crushed
  ice problem} is indeed reasonable\footnote{\label{foot1}Let us
  assume that $\Omega$ is an isolated container occupied by a
  homogeneous medium, while the sets $D\ie$ are regarded as a small
  pieces of ice. Under a certain idealization (the ice pieces do not
  melt and move) the heat distribution in $\Omega\e$ at time $t>0$ is
  described by the function $\exp(\Delta_{\Omega\e} t)u_0$, where
  $\Delta_{\Omega\e}$ is the Laplace operator subject to Dirichlet
  conditions on the boundary of the ice pieces and Neumann conditions
  on $\partial\Omega$ (since the container is isolated), $u_0$ is the
  heat distribution at $t=0$.}. Also domains with a lot of Dirichlet
holes have interesting scattering properties (fading/solidifying
obstacles, cf.~\cite{R75b,RT75}).

For more details on the topic we refer also to
articles~\cite{Ba88,Be95,Kh72,Oza83,R75a,PV80}, as well as to the
monographs~\cite{Br02,Cha84,CPS07,MK74,MK06,Si05}.

In what follows, we focus on the case $q<\infty$.

In the language of operator theory one can reformulate the above
result as follows: the operator $-\Delta_{\Omega\e}$ converges to the
operator $-\Delta_\Omega+q$ in strong resolvent sense. Strictly
speaking, we are not able to treat the classical resolvent convergence
(since the underlying operators act in different Hilbert spaces), but
we have its natural analogue for varying domains with $\Omega_\eps
\subset \Omega$:
\begin{gather}\label{strong}
\forall f\in \L(\Omega):\ \|(-\Delta_{\Omega\e}+\Id)^{-1}J\e f-J\e(-\Delta_{\Omega}+q+\Id)^{-1}f\|_{\L(\Omega\e)}\to 0\text{ as }\eps\to 0,
\end{gather}
where $J\e f:= f\restr{\Omega\e}$.

In the recent preprint~\cite{CDR17} the authors
improved~\eqref{strong} by proving (a kind of) \emph{norm} resolvent
convergence, namely
\begin{gather}\label{uniform}
\|J'\e(-\Delta_{\Omega\e}+\Id)^{-1}-(-\Delta_{\Omega}+q+\Id)^{-1}J'\e\|_{\mathcal{L}(\L(\Omega\e),\L(\Omega))}\to 0\text{ as }\eps\to 0,
\end{gather}
where $J'\e\colon\L(\Omega\e)\to \L(\Omega)$ is the operator of
extension by zero.  The authors assumed that $D\ie$ are balls,
distributed $\eps$-periodically in $\Omega$. 
For bounded $\Omega$ their proof resembles the variational approach developed in~\cite{CM82}, for unbounded $\Omega$ they also utilize a rapid decay of the Green's function of $-\Delta+\Id$. 

In the current work we extend the result of~\cite{CDR17} providing an
estimate for the rate of convergence in~\eqref{uniform} (see
Theorem~\ref{thm:main:2} below). We also improve~\eqref{strong} (see
Theorem~\ref{thm:main:1}) deriving the operator estimate
\begin{gather*}
\|(-\Delta_{\Omega\e}+\Id)^{-1}J\e-J\e(-\Delta_{\Omega}+q+\Id)^{-1}\|_{\mathcal{L}(\L(\Omega),\L(\Omega\e))}\leq 4\delta\e,
\end{gather*} 
where
$\delta\e=\left|q-\lim_{\eps\to 0}\capty(D\e)\eps^{-n}\right|+\gamma\e$
with $\gamma\e=o(1)$ depending on the dimension $n$ (for the
``physical'' cases $n=2$ and $n=3$ one has
$\gamma\e=\mathcal{O}(\eps\ln\eps)$ and $\gamma\e=\mathcal{O}(\eps)$,
respectively).

As a consequence of our main results, we establish uniform convergence
of the corresponding semi-groups and (for bounded $\Omega$) an
estimate for the difference between the $k$-th eigenvalue of
$-\Delta_{\Omega\e}$ and $-\Delta_{\Omega}+q$ --- see
Theorems~\ref{thm:main:3}--\ref{thm:main:4}.

Let us stress that in all our results (except Theorem~\ref{thm:main:4}) we do not assume that the domain $\Omega$ is \emph{bounded}.

Our proofs are based on the abstract scheme for studying the convergence of
operators in varying Hilbert spaces which was developed by the second author of the present article in~\cite{P06} and in more detail in the monograph~\cite{P12}.

Before proceeding to the main part of the work let us mention several
related results:
\begin{itemize}

\item Some estimates for the rate of convergence in~\eqref{strong}
  were obtained in~\cite[{\S}16]{CPS07}. Namely, assuming that $n=3$, $D\ie$ are balls of radius $\eps^3$ (that is
  $\capty(D\ie)\eps^{-3}=4\pi=q$) distributed $\eps$-periodically, and
  the function $f$ belongs to the H\"older class
  $\mathsf{C}^{0,\a}(\overline{\Omega})$, the authors derived the
  estimates
\begin{gather}
  \nonumber
  \|(-\Delta_{\Omega\e}+\Id)^{-1}J\e f-J\e(-\Delta_{\Omega}+q+\Id)^{-1}f\|_{ \L(\Omega\e)}
  \leq {C}\eps\|f\|_{\mathsf{C}^{0,\a}(\overline{\Omega})}.
  \\\label{H1est}
  \|(-\Delta_{\Omega\e}+\Id)^{-1}J\e f-J\e \varphi\e (-\Delta_{\Omega}+q+\Id)^{-1}f\|_{ \H(\Omega\e)}\leq {C}\eps\|f\|_{\mathsf{C}^{0,\a}(\overline{\Omega})},
\end{gather}
where $\varphi\e$ is the operator of multiplication by a certain
cut-off function.

\item \eqref{H1est}-like estimates were also obtained
  in~\cite{Be95}. In this work the holes are distributed
  $\eps$-periodically in a bounded domain $\Omega\subset\mathbb{R}^n$
  ($n\geq 2$), no special assumptions on the geometry of holes are
  imposed.  Let $f \in \mathsf L_\infty(\Omega)$. Then one has the
  estimate
 \begin{gather*}
 \|(-\Delta_{\Omega\e})^{-1}J\e f-J\e \psi\e (-\Delta_{\Omega}+q)^{-1}f\|_{ \H(\Omega\e)}\leq
 \Lambda\e\|f\|_{ \mathsf{L}_\infty(\Omega)},\ \lim_{\eps\to 0}\Lambda\e= 0,
 \end{gather*}
 where the small factor $\Lambda\e$ is expressed in terms of the first eigenvalue of the Laplace
 operator on a period cell subject to the Dirichlet conditions on the hole boundary and the periodic
 conditions on the external part of the period cell boundary; the  function $\psi\e$ is built on the
 basis of the corresponding eigenfunction.
 
\item One can also study a surface distribution of holes, i.e.\ holes
  being located near some hypersurface $\Gamma$ intersecting $\Omega$.
  This problem was first considered in~\cite{MK64}; it was proved that
  the limit operator is $-\Delta_\Omega+q\delta_\Gamma$.  Here
  $q\in\mathsf{L}_\infty(\Gamma)$ is a positive function, and
  $\delta_\Gamma$ is a delta-distribution supported on $\Gamma$. For
  the case $n=2$, the norm resolvent convergence with estimates on the
  rate of convergence were obtained in~\cite{BCD16}, where even more
  general elliptic operators were treated.  The proofs in~\cite{BCD16}
  rely on variational formulations for the pre-limit and the
  homogenized resolvent equations (the key object of their analysis is
  a certain integral identity associated with the difference of the
  resolvents).  Note that the method we use in the current works
  allows to treat surface distributions of holes as
  well.  Nevertheless, to simplify the presentation, we focus on the
  bulk distribution of holes only.
  
\item Operator estimates in homogenization theory is a rather young
  topic.  The classical homogenization problem concerning elliptic
  operators of the form
  \begin{gather*} 
    \A\e = -\mathrm{div}\left(A\Bigl(\frac . \eps\Bigr)\nabla\right),
    \text{where $A(\cdot)$ is a $\mathbb{Z}^n$-periodic function,}
  \end{gather*}
  was first treated in~\cite{BS04,BS07,G04,G06,Z05a,Z05b,ZP05}, see
  the recent papers~\cite{ZP16,S16} for further references.  In
  particular, the article~\cite{Z05b} deals with a perturbation which
  is defined by rescaling an abstract periodic measure.  The technique
  developed in~\cite{Z05b} can be applied for deriving operator
  estimates is the case of periodically perforated domains provided
  the sizes of holes and distances between them are of the same
  smallness order (evidently, this does not hold for the problem we
  study in the current paper).  Operator estimates were also obtained
  for elliptic operators with frequently alternating boundary
  conditions (see, e.g.,~\cite{BBC10}), for problems in domains with
  oscillating boundary~\cite{BCFP13}, or for the ``double-porosity''
  model in~\cite{CK17}. For more results we refer to the
  paper~\cite{BCD16} containing a comprehensive overview on operator
  estimates in homogenization theory.

\item In~\cite{anne-post:pre17} we treat (possibly non-compact)
  manifolds with an increasing (even infinite) number of balls removed
  (similarly as in~\cite{RT75}), and show operator estimates using
  similar methods as in this article.

\end{itemize}

\subsection*{Acknowledgements}

This research was carried on when the first author was a postdoctoral researcher in Karlsruhe Institute of Technology. He gratefully acknowledges financial support by the Deutsche
Forschungsgemeinschaft (DFG) through CRC~1173 ``Wave phenomena: analysis and numerics''.

\section{Setting of the problem and main results}
\label{sec:main}

Let $n\geq 2$ and let $\Omega \subset \R^n$ be a domain (not
necessarily bounded) with $\mathsf{C}^2$-boundary $\partial\Omega$.
We also assume that there exists a constant $\theta_\Omega>0$
such that the following map is injective on $\partial\Omega \times [0, \theta]$ provided
$\theta<\theta_\Omega$:
\begin{gather}\label{tau}
(x, t) \mapsto x +  t \nu(x)
\end{gather}
where $\nu \colon \partial\Omega\to \mathbb{S}^{n-1}$ the unit
inward-pointing normal vector field on $\partial\Omega$.

Additionaly, we require $\Omega$ to be \emph{uniformly regular} in the sense of Browder \cite{Br59}. This requirement is automatically fulfilled, for example, for domains with compact smooth boundaries or for compact, smooth perturbations of half-spaces. Under this assumption the Dirichlet Laplacian in $\Omega$ defined via
$$\dom(\Delta_\Omega)=\mathsf{H}^2(\Omega)\cap \H_0(\Omega),\quad \Delta_\Omega f:=\Delta f$$
is a self-adjoint operator (see, e.g., the recent paper \cite{BLLR17} for more details and references on this issue).

We note, that our results remain valid under less restrictive
assumptions on $\partial\Omega$, see Remark~\ref{remark-smooth} below.

In what follows we denote by $C$, $C_1$ etc.\ generic constants
depending only on the dimension $n$.

We set $\square:=(-1/2,1/2)^n$.

\begin{figure}[h]
  \setlength{\unitlength}{1mm}
  \begin{picture}(60,60)
    \includegraphics{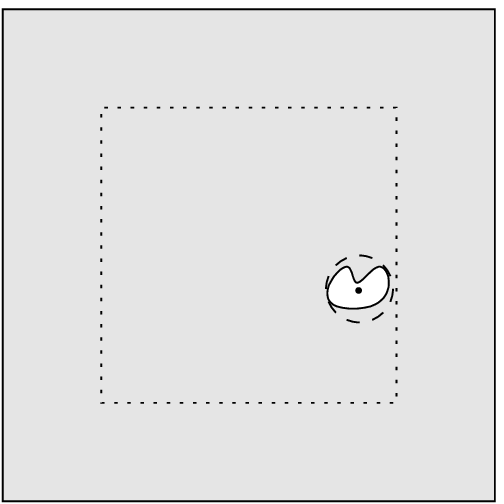}
    \put(-55,25){\vector(0,1){25}}
    \put(-55,25){\vector(0,-1){25}}
    \put(-58,25){$\eps$}
    \put(-15,35){\vector(1,0){5}}
    \put( 5, 35){\vector(-1,0){5}}
    \put(-8,34){$\kappa \eps$}
    \put(1,0){$\square\ie$}
    \put(-20,28){$D\ie$}
    \put(-18,27){\vector(1,-2){2.5}}
    \put(-30,14){$B(D\ie)$}
    \put(-22,17){\vector(2,1){4.8}}
  \end{picture}
  \qquad
  \begin{picture}(60,60)
    \includegraphics{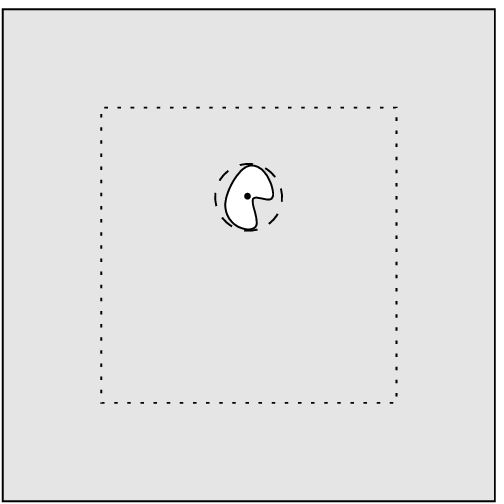}
    \put(-15,35){\vector(1,0){5}}
    \put( 5, 35){\vector(-1,0){5}}
    \put(-8,34){$\kappa \eps$}
    \put(1,0){$\square_{j\eps}$}
    \put(-35,24){$D_{j\eps}$}
    \put(-30.5,26){\vector(1,1){4.5}}
    \put(-25,19){$B(D_{j\eps})$}
    \put(-22,23){\vector(-1,3){1.8}}
  \end{picture}
  \caption{Two scaled cells $\square\ie$ and $\square_{j\eps}$ and
    possible positions of the obstacles $D\ie$ and $D_{j\eps }$
    (white).  The smallest ball $B(D\ie)$ (dashed circle) containing
    the obstacle $D\ie$ has security distance $\kappa\eps$ from the
    boundary of $\square\ie$, i.e., it should stay inside the dotted
    cube of side length $(1-2\kappa)\eps$.}
  \label{fig2}
\end{figure}
Now we describe a family of holes in $\Omega$ (see
Figure~\ref{fig2}).  Let $D\e$ be a Lipschitz domain in $\R^n$
depending on a small parameter $\eps>0$. We denote by $d\e$ the radius
of the smallest ball containing $D\e$. It is assumed that
\begin{gather}\label{d}
  (d\e)^{n-2}
  \leq C\eps^n
  \quad\text{as $n\geq 3$,}\quad
  |\ln d\e|^{-1}
  \leq C\eps^2
  \quad\text{as $n=2$,}\quad
  \text{or}\\
  \nonumber
  d\e \le
  \begin{cases}
    C_1 \eps^{n/(n-2)},& \text{as $n \ge 3$},\\
    \exp(-1/(C\eps^2)),& \text{as $n=2$}
  \end{cases}
\end{gather}
(hence, in particular, $d\e=o(\eps)$). 
For $i \in\mathbb{Z}^n$, let $D\ie$ be a set enjoying the following properties:
\begin{gather} 
  \nonumber
D\ie\text{ coinsides with }D\e\text{ up to a rigid motion},\\ 
  \nonumber
B(D\ie)\subset \square\ie:=\eps(\square+i),\\ 
\label{eq:dist.obst}
\mathrm{dist}\big(B(D\ie),\partial\square\ie\big)\geq\kappa\eps\text{ for some }\kappa>0,   
\end{gather}
where $B(D\ie)$ is the smallest ball containing $D\ie$ (the radius of this ball is $d\e$). 

Finally, we set 
\begin{gather*}
  \Omega\e := \Omega \setminus\left( \bigcup_{i \in \I}\clo{ D\ie}\right),
\end{gather*}
where 
\begin{gather*}
\I :=\left\{i \in \Z^n:\ \square\ie \subset \Omega\right\},
\end{gather*}
i.e.\ the set of those indices for which the rescaled unit cell
$\square\ie$ is entirely in $\Omega$ (with positive distance to
$\partial\Omega$).  The domain $\Omega\e$ is depicted in
Figure~\ref{fig1}.

By $\A\e$ we denote the Dirichlet Laplacian on $\Omega\e$, i.e.\
the operator acting in the Hilbert space $\L(\Omega_\eps)$ associated with the closed densely defined positive sesquilinear form 
\begin{gather*}
  \a\e[u,v]
  := \int_{\Omega\e} \nabla u\cdot\nabla \bar v \dd x,
  \qquad
  \dom(\a\e):= \Ho(\Omega_\eps).
\end{gather*}

Our goal is to describe  the behaviour of the resolvent $(\A\e+\Id)^{-1}$ as $\eps\to 0$ under the assumption that the following limit exists and is finite:
\begin{gather}\label{q}
q=\lim_{\eps\to 0}\frac {\capty(D\e)}{\eps^{n}},
\end{gather}
where $\capty(D\e)$ is the capacity of the set $D\e$. Recall (see, e.g.,~\cite{T96}), that
for $n\geq 3$ the capacity of a set $D\subset\R^n$ is defined via
\begin{gather}
  \label{capacity}
  \capty(D)=\int_{\R^n}|\nabla H(x)|^2\dd x,
\end{gather}
where $H$ is a solution to the problem
\begin{gather}
  \label{cap-problem}
  \begin{cases}
    \Delta H(x)=0,&x\in\R^n\setminus\clo{D},\\
    H(x)=1,&x\in\partial D,\\
    H(x)\to 0,&|x|\to\infty.
  \end{cases}
\end{gather}
One has also the following variational characterization of the capacity, namely
\begin{gather}
  \label{cap-min}
  \capty(D)=\min\int_{\R^n}|\nabla u(x)|^2\dd x,
\end{gather}
where the minimum is taken over $u\in \mathsf{C}_0^\infty(\R^n)$ being
equal to $1$ on a neighbourhood of $D$.

For $n=2$ the right-hand-side of~\eqref{cap-min} is zero for an arbitrary domain $D$, hence we need a modified definition. It is as follows:
\begin{gather}
  \label{capacity2}
  \capty(D)=\int_{B_1}|\nabla H(x)|^2\dd x,
\end{gather}
where $B_1$ is the unit ball concentric with $B(D)$ -- the smallest
ball containing $D$ (here we assume that the set $D$ is small enough
so that $D\subset B(D)\subset B_1$), $H$ solves the problem
\begin{gather}\label{cap-problem2}
  \begin{cases}
    \Delta H(x)=0,&x\in B_1\setminus\overline{D},\\
    H(x)=1,&x\in \partial D,\\
    H(x)=0,&x\in\partial B_1.
\end{cases}
\end{gather}

Further, proving the main results, we will use the following pointwice
estimates for the functions $H$ at some positive distance from
$B(D)$, see~\cite[Lemma~2.4]{MK06}.
\begin{lemma}
\label{lemma-Hest}
Let $x\in\R^n\setminus\overline{B(D)}$. We denote by $\rho(x)$ the
distance from $x$ to $B(D)$, and by $d$ the radius of
$B(D)$. One has:  
\begin{gather*}
\begin{array}{lll}
\abs{H(x)}\leq 
\dfrac{C \cdot d^{n-2}}{(\rho(x))^{n-2}},&
\abs{\nabla H(x)}\leq 
\dfrac{C \cdot d^{n-2}}{(\rho(x))^{n-1}}&\text{as }n\geq 3,\\[2ex]
\abs{H(x)}\leq 
\dfrac{C |\ln d|^{-1}}{|\ln \rho(x)|^{-1} },&
\abs{\nabla H(x)}\leq 
\dfrac{C |\ln d|^{-1}}{\rho(x)}&\text{as }n=2
\end{array}
\end{gather*}
provided $\rho(x)\geq C_0 d$ as $n\geq 3$ or
$\rho(x)\geq \exp\left(-C_0\sqrt{|\ln d|}\right)$ as $n\geq 3$,\, for some $C_0>0$.  
\end{lemma}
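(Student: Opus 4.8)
\emph{Approach.}
The plan is to reduce both estimates to two classical facts: the maximum principle, used to squeeze $H$ between $0$ and the \emph{explicit} equilibrium potential of the ball $B(D)$ (when $n\ge 3$), resp.\ of the annulus $B_1\setminus\clo{B(D)}$ (when $n=2$); and the interior gradient (Cauchy) estimate for harmonic functions, $\abs{\nabla u(p)}\le \tfrac{C_n}{r}\sup_{B_r(p)}\abs{u}$ for $u$ harmonic on $B_r(p)$. Throughout let $x_0$ denote the centre of $B(D)$; since $\rho(x)=\dist(x,B(D))$ one has $|x-x_0|=d+\rho(x)$, so $x$ lies outside $\clo{B(D)}$ (and, for $n=2$, inside $B_1$, this being part of the definition~\eqref{cap-problem2} of $H$).

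\emph{The case $n\ge 3$.}
I would take $\Phi(y):=\bigl(d/|y-x_0|\bigr)^{n-2}$, which is harmonic on $\R^n\setminus\clo{B(D)}$, equals $1$ on $\partial B(D)\supset D$, and vanishes at infinity. Since $D\subset B(D)$, the function $H$ is harmonic on $\R^n\setminus\clo{B(D)}$ as well and satisfies $0\le H\le 1$ by its own maximum principle; comparing $H$ and $\Phi$ on $\R^n\setminus\clo{B(D)}$ (where $H\le 1=\Phi$ on $\partial B(D)$ and both decay at infinity) yields $0\le H\le\Phi$, hence $\abs{H(x)}\le\bigl(d/(d+\rho(x))\bigr)^{n-2}\le\bigl(d/\rho(x)\bigr)^{n-2}$. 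For the gradient I would apply the Cauchy estimate on $B_{\rho(x)/2}(x)$, which lies in $\R^n\setminus\clo{B(D)}\subset\R^n\setminus\clo{D}$ because every $y$ in it has $|y-x_0|\ge d+\rho(x)/2$; combined with $H\le\Phi\le(2d/\rho(x))^{n-2}$ on that ball this gives $\abs{\nabla H(x)}\le \tfrac{2C_n}{\rho(x)}(2d/\rho(x))^{n-2}=C'd^{n-2}/\rho(x)^{n-1}$. (The hypothesis $\rho(x)\ge C_0d$ is not strictly needed in this argument, but it does no harm.)

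\emph{The case $n=2$.}
The same scheme, now with the logarithmic comparison function $G(y):=-\ln|y-x_0|/|\ln d|$ (harmonic on $\{d<|y-x_0|<1\}$, equal to $1$ on $\partial B(D)$ and $0$ on $\partial B_1$), produces $0\le H\le G$ on $B_1\setminus\clo{B(D)}$, hence $\abs{H(x)}\le-\ln(d+\rho(x))/|\ln d|\le|\ln\rho(x)|/|\ln d|$. The gradient estimate is the step I expect to be the main obstacle: the naive Cauchy estimate on a ball of radius $\sim\rho(x)$ only gives $\abs{\nabla H(x)}\le C|\ln\rho(x)|/(\rho(x)|\ln d|)$, with a parasitic logarithmic factor. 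To remove it I would pass to logarithmic coordinates: the conformal map $\sigma+i\tau\mapsto x_0+\euler^{\sigma+i\tau}$ identifies the half-cylinder $\mathcal C:=(-\infty,0)\times(\R/2\pi\Z)$ with $B_1\setminus\{x_0\}$ and turns $H$ into a function $\widetilde H$, harmonic on $\mathcal C\setminus K$, where $K$ (the image of $\clo D$) is contained in $\{\sigma\le\ln d\}$; moreover $\widetilde H=0$ on $\{\sigma=0\}$, $\widetilde H=1$ on $\partial K$, $0\le\widetilde H\le 1$, and the comparison function becomes the \emph{linear} function $\widetilde G(\sigma,\tau)=-\sigma/|\ln d|$, so $0\le\widetilde H\le\widetilde G$ on $\{\ln d<\sigma<0\}$. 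Because a conformal change of variables multiplies $|\nabla(\text{harmonic function})|$ by the modulus of its derivative, $\abs{\nabla H(x)}=|x-x_0|^{-1}\abs{\nabla\widetilde H(\sigma_x,\tau_x)}$ with $\sigma_x=\ln(d+\rho(x))$, so it remains to prove $\abs{\nabla\widetilde H(\sigma_x,\tau_x)}\le C/|\ln d|$.

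\emph{Conclusion of the $n=2$ gradient bound.}
For this I would apply the Cauchy estimate to $\widetilde H$ on the ball (in $\mathcal C$, lifted to the universal cover of the cylinder if it wraps in $\tau$) of radius $r:=\tfrac12\min\{|\sigma_x|,\ \sigma_x-\ln d\}$ centred at $(\sigma_x,\tau_x)$; this ball stays inside $\{\ln d<\sigma<0\}$, where $\widetilde H\le\widetilde G\le(|\sigma_x|+r)/|\ln d|$. Here the hypothesis $\rho(x)\ge\exp(-C_0\sqrt{|\ln d|})$ is exactly what produces the cancellation: it gives $|\sigma_x|\le|\ln\rho(x)|\le C_0\sqrt{|\ln d|}$, whereas $\sigma_x-\ln d=\ln(1+\rho(x)/d)\ge\tfrac12|\ln d|$ for $d$ small, so $r=\tfrac12|\sigma_x|$ and hence $|\sigma_x|+r=3r$. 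Thus $\sup_{B_r}\widetilde H\le 3r/|\ln d|$, and the Cauchy estimate yields $\abs{\nabla\widetilde H(\sigma_x,\tau_x)}\le(C/r)(3r/|\ln d|)=C'/|\ln d|$; combined with $|x-x_0|=d+\rho(x)\ge\rho(x)$ this gives $\abs{\nabla H(x)}\le C'|\ln d|^{-1}/\rho(x)$, as claimed. The only genuine difficulty is this removal of the spurious $|\ln\rho(x)|$ in the planar case; the remainder is the routine combination of the maximum principle with interior estimates, and the whole statement is in substance~\cite[Lemma~2.4]{MK06}.
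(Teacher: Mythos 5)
The paper does not prove this lemma; it cites it as~\cite[Lemma~2.4]{MK06}. Your proof, taken on its own merits, is correct and complete, and the structure is the natural one: squeeze $H$ between $0$ and the explicit radial equilibrium potential of $B(D)$ (resp.\ of the annulus $B_1\setminus\clo{B(D)}$) by the maximum principle, then convert the pointwise bound into a gradient bound by the interior Cauchy estimate.

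A few points worth flagging, all minor. You are right that for $n\ge 3$ the hypothesis $\rho(x)\ge C_0 d$ plays no role in your argument, so your proof in fact gives the bound unconditionally on $\R^n\setminus\clo{B(D)}$. For $n=2$ your pointwise bound is also unconditional; the hypothesis $\rho(x)\ge\exp(-C_0\sqrt{|\ln d|})$ enters only in the gradient estimate, and you use it correctly: it forces $|\sigma_x|\le C_0\sqrt{|\ln d|}\le\tfrac12|\ln d|\le\sigma_x-\ln d$ (for $d$ small), so the Cauchy radius $r$ is $\tfrac12|\sigma_x|$ and the log-linear barrier gives $\sup_{B_r}\widetilde H\le 3r/|\ln d|$, cancelling the $1/r$ in the Cauchy estimate. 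The logarithmic/conformal change of variables is exactly the right device to eliminate the spurious $|\ln\rho(x)|$ that the naive estimate would leave; an equivalent variant would be to combine the Cauchy estimate with Harnack in the same $(\sigma,\tau)$ coordinates, since $\widetilde H(\sigma_x,\tau_x)\le|\sigma_x|/|\ln d|$ and $r\sim|\sigma_x|$, but that is the same idea dressed differently. Your treatment of the cylinder (lifting to the universal cover so the Cauchy ball makes sense even when $r>\pi$) is also correct and necessary, since $|\sigma_x|$ can be as large as $C_0\sqrt{|\ln d|}$.
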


\begin{remark}\label{remark-cap}
Due to~\eqref{q} one has
\begin{gather}\label{O}
\capty(D\e)\eps^{-n}=\mathcal{O}(1).
\end{gather}
In fact, this condition also follows directly from~\eqref{d}. 
Indeed, using the monotonicity of the capacity,  we get $\capty(D\e)\leq \capty(B\e)$, where $B\e$ is ball of radius $d\e$ containing $D\e$.
For this ball the function $H$ can be computed explicitly:
\begin{equation*}
  H(x) = \dfrac{(d\e)^{n-2}}{|x|^{n-2}}\text{ as }n\geq 3,\quad
  H(x) = \dfrac{\ln x}{\ln d\e}\text{ as }n=2,
\end{equation*} 
hence $\capty(B\e)=(n-2)|\mathbb{S}^{n-1}|(d\e)^{n-2}$ as $n\geq 3$ and $\capty(B\e)=2\pi |\ln d\e|^{-1}$ as $n=2$, hence, due to~\eqref{d}, we get~\eqref{O}.
\end{remark}

Finally, we introduce the limiting operator $\A$. It acts in $\L(\Omega)$ and is defined by
$$\A : =-\Delta_\Omega+q\Id,\quad\dom(\A):=\dom(\Delta_\Omega)=\mathsf{H}^2(\Omega)\cap\Ho(\Omega).$$
By $\a$ we denote the associated form:
\begin{gather*}
  \a[u,v]
  := \int_{\Omega} \left(\nabla u\cdot\nabla \bar v + q\, u\bar v\right)\dd x,
  \qquad
  \dom(\a):= \Ho(\Omega).
\end{gather*}

The operators $\A\e$ and $\A$ act in different Hilbert spaces, namely
$\HS\e:=\L(\Omega\e)$ and $\HS:=\L(\Omega)$, respectively. Therefore
we are not able to apply the usual notion of resolvent convergence
and thus a suitable modification is needed. There are many ways how to
do this in a ``smart'' way. For example (cf.~\cite{IOS89,V81}), one
can treat the behaviour of the operator
\begin{equation*}(\mathcal{A\e}+\Id)^{-1}J\e-J\e(\A+\Id)^{-1}\in\mathcal{L}(\HS, \HS\e),
\end{equation*}
where $J\e \colon \HS\to \HS\e$ is a suitable bounded linear operator satisfying
\begin{gather}\label{as-un}
\forall f\in\HS:\ \|J\e f\|_{\HS\e}\to \|f\|_{\HS}\text{ as }\eps\to 0.
\end{gather} 

It is natural to
choose the operator $J\e$ as the operator of restriction to $\Omega\e$, i.e.\  
\begin{gather}\label{J}
(J\e f)(x)= 
\begin{cases}
f(x),&x\in\Omega\e,\\
0,&x\in\ds\cupl_{i\in \I }D\ie.
\end{cases}
\end{gather}
Due to~\eqref{d} one has for each compact set $K\subset\R^n$
\begin{gather}\label{totalD} 
\ds\suml_{i:\,D\ie\subset K}|D\ie| \leq \alpha\e|K|,\ \alpha\e=o(1)\text{ does not depend on }K,
\end{gather}
where $|K|$ stands for the Lebesgue measure of $K$.   Hence, evidently, \eqref{as-un} holds. The results of
\cite{CM82,MK64,RT75} can be reformulated as follows:
\begin{equation*}\forall f\in\HS\colon\quad\lim_{\eps\to 0}\left\|(\mathcal{A\e}+\Id)^{-1}J\e f-J\e(\A+\Id)^{-1} f\right\|_{\HS\e}\to 0, \end{equation*}
i.e.\ one has a kind of strong resolvent convergence.

Now, we can state our main result.

\begin{theorem}
\label{thm:main:1}
One has  
\begin{gather*}
\left\|(\A\e+\Id)^{-1}J\e -J\e(\A+\Id)^{-1} \right\|_{\mathcal{L}(\HS,\HS\e)}\leq 4\delta\e,
\end{gather*}
where $\delta\e$ is defined by 
\begin{gather}\label{delta}
\delta\e=|\capty(D\e)\eps^{-n}-q|+C_{\Omega,\kappa,\beta}\cdot
\begin{cases}
\eps|\ln\eps|,& n=2,\\[1ex]
\eps,& n=3\\[1ex]
\ds\eps^{1-\beta},\ \beta>0, & n=4,\\[1ex]
\maxl\left\{\eps;\, {d\e}{\eps}^{-1}\right\}, & n\geq 5,
\end{cases}
\end{gather}  
and the constant $C_{\Omega,\kappa,\beta}$ depends on the domain $\Omega$, the relative distance $\kappa$ of the obstacles from the period cell boundary (see~\eqref{eq:dist.obst}),  and, in the case $n=4$, on $\beta$.
\end{theorem}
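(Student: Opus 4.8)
The plan is to use the abstract framework for convergence in varying Hilbert spaces from \cite{P06,P12}: one constructs a pair of ``identification operators'' $J\e\colon\HS\to\HS\e$ and $J\e'\colon\HS\e\to\HS$, together with their first-order counterparts on the form domains, and then reduces the resolvent estimate to a handful of quasi-unitarity / quasi-intertwining inequalities whose right-hand sides must be shown to be $\mathcal O(\delta\e)$. Concretely, I would take $J\e$ to be the restriction operator \eqref{J} and define $J\e'$ as extension by zero, check $\|J\e J\e' - \mathrm{I}\|$ and $\|J\e' J\e - \mathrm{I}\|$ are controlled by $\alpha\e^{1/2}=o(1)$ using \eqref{totalD}, and then define the first-order identification operator $J\e^1\colon\dom(\a)\to\dom(\a\e)$ by cutting off near the holes: $J\e^1 u := (1-w\e)\,u\restr{\Omega\e}$, where $w\e$ is built cellwise from the capacitary potential $H$ of $D\e$ (so $w\e=1$ on $\partial D\ie$, $w\e$ supported in a ball of radius $\sim\eps$ around each hole, scaled so that $\int|\nabla w\e|^2 \approx \capty(D\e)\eps^{-n}\to q$ per unit volume). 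The reverse operator $J\e'^{\,1}$ on form domains can simply be extension by zero, since an $\Ho(\Omega\e)$ function extends to $\Ho(\Omega)$.

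The heart of the argument is then to verify the ``$\delta$-quasi-unitary'' estimates, of which the crucial one is the form-consistency bound
\begin{gather*}
\bigl| \a\e[J\e^1 u, v\e] - \a[u, J\e'^{\,1} v\e] \bigr| \leq \delta\e \,\|u\|_{\dom(\a)}\,\|v\e\|_{\dom(\a\e)}
\end{gather*}
for all $u\in\dom(\a)$, $v\e\in\dom(\a\e)$, together with the compatibility estimates $\|J\e u - J\e^1 u\|_{\HS\e}\leq\delta\e\|u\|_{\dom(\a)}$ and its partner. Expanding $\a\e[(1-w\e)u,v\e]$ one gets the main term $\a[u,v\e]$ (up to the measure-of-holes error from \eqref{totalD}), a term $\int \nabla w\e\cdot\nabla w\e\, u\bar v\e$ that, by the scaling and normalisation of $w\e$, reproduces $\int q\, u\bar v\e$ with error $|\capty(D\e)\eps^{-n}-q|$, plus genuine cross terms of the form $\int (\nabla w\e)(\nabla u)\bar v\e$ and $\int (\nabla w\e) u (\nabla\bar v\e)$. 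These cross terms are estimated cellwise: on each cell one uses that $v\e$ vanishes on $\partial D\ie$, applies a Poincaré-type inequality on the punctured cell (this is where the security distance $\kappa\eps$ from \eqref{eq:dist.obst} enters, to keep the Poincaré constant uniform), and combines it with the pointwise gradient bounds for $H$ from Lemma~\ref{lemma-Hest}; a Cauchy--Schwarz over cells and a summation then produce exactly the dimension-dependent factors $\eps|\ln\eps|$, $\eps$, $\eps^{1-\beta}$, $\max\{\eps,d\e\eps^{-1}\}$ in \eqref{delta}. Finally, the abstract theorem of \cite{P12} converts these quasi-unitarity data into the operator norm bound on $(\A\e+\Id)^{-1}J\e-J\e(\A+\Id)^{-1}$, with the explicit constant $4$ coming from the bookkeeping in that scheme.

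The main obstacle is the sharp bookkeeping of the cross terms $\int_{\Omega\e}\nabla w\e\cdot\nabla u\,\bar v\e\dd x$: naively Cauchy--Schwarz gives a factor $\|\nabla w\e\|_{\L}\sim q^{1/2}=\mathcal O(1)$, which is not small, so one must integrate by parts and exploit that $w\e$ is harmonic away from $\partial D\ie$ (being built from $H$), trading one gradient for a boundary term or for a better-decaying weight, and then genuinely use the vanishing of $v\e$ on $\partial D\ie$ via Poincaré. Getting the correct $\eps$-power in each dimension — in particular the logarithmic loss at $n=2$ and the $\beta$-loss at the critical dimension $n=4$ where the natural integral $\int_{d\e}^{\eps}\rho^{-1}\dd\rho$-type quantity is only borderline convergent — requires choosing the support radius of $w\e$ and the integration-by-parts region carefully, and this is the step where the restriction \eqref{d} on $d\e$ and the pointwise bounds of Lemma~\ref{lemma-Hest} are used most delicately.
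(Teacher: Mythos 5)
Your overall plan is the same as the paper's: invoke the $\delta\e$-closeness framework from~\cite{P06,P12} (specifically Theorem~\ref{thm:abstract:1}), with $J\e$ the restriction, $J\e'$ extension by zero, $J^{1\prime}\e = J'\e$, and a cut-off first-order operator $J^1\e$ built from the capacitary potential. Where you differ is in the form of $J^1\e$: you take a Cioranescu--Murat-style corrector $J^1\e u = (1-w\e)u$, whereas the paper uses the mean-split construction $J^1\e f = f - \sum_i (f-f\ie)\chi\ie - \sum_i f\ie H\ie\widehat\chi\ie$, with a narrow cut-off $\chi\ie$ on scale $d\e$ for the oscillatory part $f-f\ie$ and a wide cut-off $\widehat\chi\ie$ on scale $\kappa\eps$ for the constant part $f\ie$. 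That split is not cosmetic: because the capacity-carrying piece $Q\ie f = f\ie H\ie\widehat\chi\ie$ multiplies the \emph{constant} $f\ie$, the paper's main cross term $(\nabla u, \nabla Q\ie f)$ integrates by parts cleanly into $-f\ie u\ie\capty(D\e)$ plus a remainder $\zeta\e$ that is genuinely small, whereas in your version you also carry terms like $\int \nabla u\cdot\nabla w\e\,\bar v$ with $\nabla u$ non-constant, which the paper's split avoids altogether. Both corrector choices differ only by $\sum_i (f-f\ie)(\chi\ie - H\ie\widehat\chi\ie)$, which is small, so your route should in principle also close; but the error control is noticeably messier.

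There is, however, a genuine inaccuracy in your expansion. Expanding $\a\e[(1-w\e)u, v] - \a[u, J'\e v]$ gives exactly $-\int w\e\nabla u\cdot\overline{\nabla v} - \int u\nabla w\e\cdot\overline{\nabla v} - q\int u\bar v$; the term $\int |\nabla w\e|^2\,u\bar v$ that you claim carries the $q$-cancellation simply does not arise here (it would appear in the \emph{symmetric} quantity $\a\e[J^1\e u, J^1\e v]$, but condition~\eqref{cond5} is not of that form). The $q$-cancellation instead comes from integrating the middle term by parts to produce $\int u\,\Delta w\e\,\bar v$, replacing $u,\bar v$ by their cell means, and using the boundary-integral characterisation of capacity $\capty(D\e) = -\int_{\partial D\ie}\partial_\nu H\ie\,\dd s$ (Lemma on $\capty$ via $\partial_\nu H$); you do gesture at this in your ``main obstacle'' paragraph, but it contradicts the first description. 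One more point worth making explicit: the sharp $\eps$-powers in~\eqref{delta} require the form estimate~\eqref{cond5} to be taken with $k=2$, i.e.\ with the $\HS^2$-norm on the right, and elliptic regularity \eqref{h2h2} (which is where the $\Omega$-dependence of $C_{\Omega,\kappa,\beta}$ and the uniform-regularity hypothesis enter). With only the $\HS^1$-norm the $\L_p$--$\L_q$ H\"older/Sobolev step producing the dimension-dependent rates does not go through.
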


\begin{remark}
Via the same arguments as in Remark~\ref{remark-cap} one gets
$ (d\e)^{n-2}\eps^{-n}\geq C>0$ provided $q>0$, hence, using the definition of $\delta\e$,  we obtain   
\begin{equation*}
  q>0,\ n\geq 5:\quad
  \delta\e=|\capty(D\e)\eps^{-n}-q|+C_{\Omega,\kappa}\eps^{2/(n-2)}.
\end{equation*}
\end{remark}

Let $J\e' \colon \HS\e\to\HS$ be the operator of extension by zero:
\begin{gather}\label{J'}
(J\e'u)(x)=
\begin{cases}
  u(x),&x\in\Omega\e,\\
  0,&x\in \ds\cupl_{i\in\I}D\ie.
\end{cases}
\end{gather}
Then the main result of~\cite{CDR17} is equivalent to
\begin{equation*}\left\|J'\e(\mathcal{A\e}+\Id)^{-1}-(\A+\Id)^{-1}J'\e \right\|_{\mathcal{L}(\HS\e,\HS)}\to 0,\ \eps\to 0.\end{equation*}
The next theorem gives an improvement of this statement. 

\begin{theorem}\label{thm:main:2}
One has
\begin{gather*}
\left\|J'\e(\mathcal{A\e}+\Id)^{-1}-(\A+\Id)^{-1}J'\e \right\|_{\mathcal{L}(\HS\e,\HS)}\leq 6\delta\e,
\end{gather*}  
where $\delta\e$ is defined in~\eqref{delta}. Moreover, 
\begin{gather*}
\left\|J'\e(\mathcal{A\e}+\Id)^{-1}J\e-(\A+\Id)^{-1}\right\|_{\mathcal{L}(\HS,\HS)}\leq 9\delta\e,\\
\left\|(\mathcal{A\e}+\Id)^{-1}-J\e(\A+\Id)^{-1}J'\e \right\|_{\mathcal{L}(\HS\e,\HS\e)}\leq 13\delta\e.
\end{gather*} 
\end{theorem}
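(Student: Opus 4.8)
The plan is to deduce Theorem~\ref{thm:main:2} from Theorem~\ref{thm:main:1} by elementary operator manipulations exploiting the structure of the identification operators in~\eqref{J}--\eqref{J'}. Write $R\e:=(\A\e+\Id)^{-1}$ and $R:=(\A+\Id)^{-1}$, and put
\begin{gather*}
  T\e:=R\e J\e-J\e R\in\mathcal{L}(\HS,\HS\e),
\end{gather*}
so that Theorem~\ref{thm:main:1} reads $\|T\e\|\leq 4\delta\e$. I would first record the basic properties of $J\e$ and $J'\e$: with respect to the scalar products of $\HS=\L(\Omega)$ and $\HS\e=\L(\Omega\e)$ one has $J\e^{*}=J'\e$ (in particular $\|J\e\|=\|J'\e\|=1$), and, since $J\e$ is the restriction and $J'\e$ the extension by zero, $J\e J'\e=\Id_{\HS\e}$, whereas $\Id_{\HS}-J'\e J\e$ is the operator of multiplication by the indicator function $\mathbf{1}_{\Omega\setminus\Omega\e}$. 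Moreover $\|R\|_{\mathcal{L}(\HS,\HS)}\leq 1$ and, since $q\geq 0$ and $\a[v,v]+\|v\|_{\HS}^{2}=\langle(\A+\Id)v,v\rangle_{\HS}$, also $\|R\|_{\mathcal{L}(\HS,\H(\Omega))}\leq 1$.

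Using these identities, the three operators appearing in Theorem~\ref{thm:main:2} decompose as
\begin{gather*}
  J'\e R\e-R J'\e=T\e^{*},\qquad R\e-J\e R J'\e=T\e J'\e,\\
  J'\e R\e J\e-R=J'\e T\e+(J'\e J\e-\Id_{\HS})R,
\end{gather*}
the first identity using $R\e^{*}=R\e$, $R^{*}=R$ and $J\e^{*}=J'\e$, the second using $J\e J'\e=\Id_{\HS\e}$, and the third using $R\e J\e=J\e R+T\e$. Hence the whole statement reduces to Theorem~\ref{thm:main:1} together with one analytic estimate, the smallness in operator norm of $(\Id_{\HS}-J'\e J\e)R$:
\begin{gather}\label{eq:small.holes}
  \bigl\|(\Id_{\HS}-J'\e J\e)\,R\bigr\|_{\mathcal{L}(\HS,\HS)}\leq C_{\Omega,\kappa}\,\gamma\e,\qquad\gamma\e=o(1),
\end{gather}
where $\gamma\e$ is dominated, up to a constant, by the geometric part of $\delta\e$ in~\eqref{delta} for every $n\geq 2$. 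Granting~\eqref{eq:small.holes}, and using $\|J\e\|=\|J'\e\|=1$, one obtains $\|J'\e R\e-R J'\e\|\leq 4\delta\e$, $\|R\e-J\e R J'\e\|\leq 4\delta\e$, and $\|J'\e R\e J\e-R\|\leq 4\delta\e+C_{\Omega,\kappa}\gamma\e$; bounding $C_{\Omega,\kappa}\gamma\e$ by a multiple of $\delta\e$ then yields the constants $6\delta\e$, $13\delta\e$, $9\delta\e$ claimed in Theorem~\ref{thm:main:2}.

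It remains to establish~\eqref{eq:small.holes}, which is the only place where the size restriction~\eqref{d} on the holes re-enters. For $v\in\H(\Omega)$,
\begin{gather*}
  \bigl\|(\Id_{\HS}-J'\e J\e)v\bigr\|_{\L(\Omega)}^{2}=\sum_{i\in\I}\int_{D\ie}|v|^{2}\dd x\leq\sum_{i\in\I}\int_{B(D\ie)}|v|^{2}\dd x,
\end{gather*}
and the plan is to estimate each summand by rescaling the period cell $\square\ie$ to the unit cube and applying there a Sobolev--Poincar\'e inequality (the embedding $\mathsf{H}^{1}\hookrightarrow\mathsf{L}_{2n/(n-2)}$ if $n\geq 3$, a logarithmic Sobolev-type inequality if $n=2$), together with $B(D\ie)\subset\square\ie$ and $\diam B(D\ie)=2d\e$; since the cells $\square\ie$, $i\in\I$, are pairwise disjoint and contained in $\Omega$, summation gives $\|(\Id_{\HS}-J'\e J\e)v\|_{\L(\Omega)}\leq C(d\e/\eps)\,\|v\|_{\H(\Omega)}$ for $n\geq 3$ and a bound with an exponentially small factor for $n=2$. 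Taking $v=Ru$ and using $\|R\|_{\mathcal{L}(\HS,\H(\Omega))}\leq 1$ yields~\eqref{eq:small.holes}. The main, though modest, obstacle is to check that this small factor is controlled by the geometric part of $\delta\e$: by~\eqref{d} one has $d\e/\eps\leq C\eps^{2/(n-2)}$, which for $n=3,4$ is bounded by a constant times $\eps$ (hence by the geometric part of $\delta\e$), and for $n\geq 5$ is already one of the two terms in the maximum defining that part, while for $n=2$ the factor is exponentially small and trivially $\leq\eps|\ln\eps|$. Since $\Omega$ is allowed to be unbounded, this estimate has to be carried out cell by cell rather than through a global Sobolev embedding, which is the one point needing some care.
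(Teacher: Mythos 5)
Your proof is correct but takes a genuinely different (and more elementary) route than the paper. The paper deduces Theorem~\ref{thm:main:2} from Corollary~\ref{abs-cor}, which is a specialization of the abstract Theorem~\ref{thm:abstract:3} (with $\psi(\lambda)=(1+\lambda)^{-1}$) to pairs that are $\delta\e$-close; the constants $6,9,13$ come from chasing the discrepancies allowed by conditions \eqref{cond2}--\eqref{cond3b} in the abstract setting. You instead exploit the \emph{exact} relations $J\e^*=J'\e$ and $J\e J'\e=\Id_{\HS\e}$ (special to restriction/extension-by-zero, and not available in the abstract framework where \eqref{cond2} and \eqref{cond3b} are only $\delta\e$-small) to reduce everything directly to Theorem~\ref{thm:main:1} plus a single estimate on $(\Id_{\HS}-J'\e J\e)R$; that estimate is precisely the content of condition \eqref{cond3a}, which the paper also proves. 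Your three operator identities are all correct, and they yield slightly \emph{sharper} bounds than the statement ($4\delta\e$, $5\delta\e$, $4\delta\e$ rather than $6\delta\e$, $9\delta\e$, $13\delta\e$), because you incur no loss from $J\e^*\ne J'\e$ or $J\e J'\e\ne\Id$. The trade-off is generality: your argument would not survive a change of identification operators for which these exact identities fail, whereas the abstract corollary would. A minor point: your inline re-derivation of \eqref{cond3a} via a rescaled Sobolev/Hölder argument works for $n\geq 3$ but the $n=2$ ``logarithmic Sobolev-type inequality'' is left vague; you could simply invoke the paper's proof of \eqref{cond3a} (based on Lemma~\ref{lemma2}), which handles all $n\geq 2$ uniformly, rather than redoing it.
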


One important applications of the norm resolvent convergence is the uniform convergence of semi-groups generated by $\A\e$ and $\A$. Namely, we can approximate $\exp(-\A\e t)$ in terms of simpler operators $\exp(-\A t)$, $J\e$ and $J'\e$:

\begin{theorem}\label{thm:main:3}
One has for each $t>0$:
\begin{equation*}\left\| {\exp(-\A\e t)} - J\e{\exp(-\A t)}J\e'\right\|_{\mathcal{L}(\HS\e,\HS\e)}\leq c_t\delta\e,\end{equation*} where $\delta\e$ is defined in~\eqref{delta}, and the constant $c_t$ depends only on $t$.
\end{theorem}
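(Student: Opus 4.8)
The plan is to deduce Theorem~\ref{thm:main:3} from the norm-resolvent estimates of Theorems~\ref{thm:main:1} and~\ref{thm:main:2} via the Dunford--Riesz (Cauchy) integral representation of the heat semigroups. Since $q\geq 0$, both $\A\e$ and $\A$ are non-negative self-adjoint operators, so their spectra lie in $[0,\infty)$. Fix the ``C-shaped'' contour
\[
  \gamma := \{x+i:\ x\geq -1\}\ \cup\ \{-1+iy:\ -1\leq y\leq 1\}\ \cup\ \{x-i:\ x\geq -1\},
\]
traversed from $+\infty-i$ to $+\infty+i$; then $\dist(z,[0,\infty))\geq 1$ for every $z\in\gamma$, and for any non-negative self-adjoint $A$ one has the norm-convergent representation $\euler^{-tA}=\tfrac1{2\pi i}\int_\gamma\euler^{-tz}(z-A)^{-1}\dd z$ for $t>0$ (the integral converges because $\|(z-A)^{-1}\|\leq\dist(z,[0,\infty))^{-1}\leq 1$ on $\gamma$ and $|\euler^{-tz}|=\euler^{-t\Re z}$ decays on the two rays). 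Applying this to $A=\A\e$ and to $A=\A$, and sandwiching the latter between $J\e$ and $J\e'$, one obtains
\[
  \euler^{-\A\e t}-J\e\,\euler^{-\A t}\,J\e'=\frac1{2\pi i}\int_\gamma\euler^{-tz}\,\Delta\e(z)\dd z,
  \qquad \Delta\e(z):=(z-\A\e)^{-1}-J\e(z-\A)^{-1}J\e',
\]
so everything reduces to a pointwise-in-$z$ bound on $\|\Delta\e(z)\|$ with at most polynomial growth in $|z|$.

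To control $\|\Delta\e(z)\|$ I would reduce it to the resolvents at the base point $-1$, which is exactly what Theorems~\ref{thm:main:1} and~\ref{thm:main:2} estimate. Write $R\e:=(\A\e+\Id)^{-1}$, $R:=(\A+\Id)^{-1}$, and, for $z\notin[0,\infty)$, $\Phi(z,A):=(z+1)(z-A)^{-1}-\Id$. The resolvent identity with base point $-1$ gives the elementary facts $(z-A)^{-1}=\Phi(z,A)(A+\Id)^{-1}$ and $(\Id-(z+1)(A+\Id)^{-1})^{-1}=-\Phi(z,A)$, whence $\|\Phi(z,A)\|\leq 1+|z+1|\,\dist(z,[0,\infty))^{-1}$. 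A short computation with these identities --- insert $J\e R J\e'$ once, then solve the resulting linear relation for the mixed term $(z-\A\e)^{-1}J\e-J\e(z-\A)^{-1}$ --- yields the closed form
\[
  \Delta\e(z)=\Phi(z,\A\e)\bigl(R\e-J\e R J\e'\bigr)-(z+1)\,\Phi(z,\A\e)\bigl(R\e J\e-J\e R\bigr)(z-\A)^{-1}J\e',
\]
and therefore, using $\|R\e-J\e R J\e'\|\leq 13\delta\e$ and $\|R\e J\e-J\e R\|\leq 4\delta\e$ from Theorems~\ref{thm:main:2} and~\ref{thm:main:1} together with $\|J\e\|,\|J\e'\|\leq 1$,
\[
  \|\Delta\e(z)\|\ \leq\ \Bigl(1+\tfrac{|z+1|}{\dist(z,[0,\infty))}\Bigr)\Bigl(13+\tfrac{4|z+1|}{\dist(z,[0,\infty))}\Bigr)\delta\e\ \leq\ C\,(1+|z|)^2\,\delta\e\qquad(z\in\gamma).
\]
Inserting this into the contour integral and using $|\euler^{-tz}|=\euler^{-t\Re z}$, the two rays contribute at most $C\delta\e\int_{-1}^\infty\euler^{-tx}(2+|x|)^2\dd x$, which is finite and depends only on $t$, while the short vertical segment contributes at most $C\euler^{t}\delta\e$; altogether
\[
  \bigl\|\euler^{-\A\e t}-J\e\,\euler^{-\A t}\,J\e'\bigr\|_{\mathcal L(\HS\e,\HS\e)}\ \leq\ c_t\,\delta\e,
\]
with $c_t$ of order $C\euler^{t}(1+t^{-3})$, i.e.\ depending only on $t$ (and on $n$ through the generic constant), as claimed.

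The step I expect to be the main obstacle is getting the closed form for $\Delta\e(z)$ with only $O(|z|^2)$ growth: a careless iteration of resolvent identities produces extra factors $(z+1)$ multiplying resolvents $(z-\A)^{-1}$ that individually blow up as $z$ approaches $[0,\infty)$, so one must organise the algebra so that every occurrence of the two ``defects'' $R\e-J\e R J\e'$ and $R\e J\e-J\e R$ is flanked by a bounded coefficient $\Phi(z,\cdot)$ --- this is precisely what keeps $\|\Delta\e(z)\|$ integrable against $\euler^{-tz}$ over $\gamma$. A related, minor point is that $J\e'J\e\neq\Id$ on $\HS$ (only $J\e J\e'=\Id$ on $\HS\e$), so in the manipulations one must never isolate $\Id-J\e'J\e$ without an adjacent smoothing factor; the estimates of Theorems~\ref{thm:main:1}--\ref{thm:main:2} are tailored exactly so that this never happens. (Alternatively, the whole argument can be subsumed into the abstract functional-calculus transfer results of~\cite{P12} applied to the function $\lambda\mapsto\euler^{-t\lambda}$.)
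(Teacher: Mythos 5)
Your proof is correct. The algebraic identity you derive for $\Delta\e(z)$ checks out: setting $X:=(z-\A\e)^{-1}J\e-J\e(z-\A)^{-1}$ and using $(z-A)^{-1}=\Phi(z,A)(A+\Id)^{-1}$ together with $(\Id-(z+1)(A+\Id)^{-1})^{-1}=-\Phi(z,A)$, one solves $X=\Phi(z,\A\e)(R\e J\e-J\e R)+(z+1)XR$ to obtain $X=-\Phi(z,\A\e)(R\e J\e-J\e R)\Phi(z,\A)$, which indeed gives your closed form, and on the contour $\gamma$ (where $\dist(z,[0,\infty))\geq1$) the resulting bound $\|\Delta\e(z)\|\leq C(1+|z|)^2\delta\e$ is integrable against $\euler^{-t\Re z}$. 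The convergence of the Dunford--Riesz representation for a non-negative self-adjoint generator and $\psi(z)=\euler^{-tz}$ is standard. One small inaccuracy in a side remark: the constant $c_t$ you obtain depends on nothing but $t$ (the factors $4$ and $13$ and the bound $\|\Phi(z,\cdot)\|\leq1+|z+1|$ are universal), so there is no hidden dependence on the dimension $n$.

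However, the route is different from the paper's. The paper does not reprove the contour-integral transfer at all: having verified that $(\HS,\a)$ and $(\HS\e,\a\e)$ are $\delta\e$-close of order $2$ in the sense of Definition~\ref{def:deltaclo}, it simply invokes the abstract Theorem~\ref{thm-holom} (quoted from \cite[Th.~3.7]{MNP13}) with $\psi(\lambda)=\euler^{-t\lambda}$, and the footnote there notes that the cited proof uses only the single sandwiched estimate $\|R\e-J\e R J'\e\|\leq13\delta\e$. Your argument instead takes Theorems~\ref{thm:main:1} and~\ref{thm:main:2} as black boxes and carries out the Dunford--Riesz computation directly, using both ``defects'' $R\e J\e-J\e R$ and $R\e-J\e R J'\e$ to organise the algebra so that each is flanked by a bounded factor $\Phi(z,\cdot)$. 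What you gain is a self-contained proof that does not require reaching into \cite{MNP13}; what the paper's route buys is generality -- the abstract theorem applies at once to all holomorphic $\psi$ with power decay $|\psi(z)|=\mathcal O(|z|^{-\mu})$, $\mu>1/2$, not just to $\euler^{-tz}$, and factors the functional-calculus transfer cleanly away from the PDE estimates. Both approaches are ultimately contour-integral arguments built on the same norm-resolvent information, so they are close in spirit even though they are presented very differently.
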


Another important application is the Hausdorff convergence of spectra,
 see~\cite{CDR17}.  
Using Theorem~\ref{thm:main:1} we are able to extend this
result by obtaining an estimate for the difference between the corresponding  eigenvalues. Namely, let the domain $\Omega$ be bounded. We denote by $\{\lambda_{k,\eps}\}_{k\in\mathbb{N}}$ and
$\{\lambda_{k}\}_{k\in\mathbb{N}}$ the sequences of the eigenvalues of $\A\e$ and $\A$, respectively,
arranged in the ascending order and repeated according to their
multiplicities.
 
\begin{theorem}\label{thm:main:4}
For each $k\in\mathbb{N}$ one has 
\begin{equation}\label{spectrum1}
\lim_{\eps\to 0}\lambda_{k,\eps}=\lambda_k, 
\end{equation}
moreover
\begin{equation}
\label{spectrum2}
|\lambda_{k,\eps}-\lambda_k|\leq
  4C\e(\lambda_{k,\eps}+1)(\lambda_k+1)\delta\e,
\end{equation} 
where $\delta\e$ is defined
in~\eqref{delta}, and  $|C\e|\leq C$, $\lim_{\eps\to 0}C\e=1$.
\end{theorem}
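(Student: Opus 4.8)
The plan is to deduce Theorem~\ref{thm:main:4} from the operator estimate in Theorem~\ref{thm:main:1} via a general perturbation argument relating eigenvalues of self-adjoint operators whose inverses are close in norm. Since $\Omega$ is bounded, both $\A\e$ and $\A$ have compact resolvents, so $(\A\e+\Id)^{-1}$ and $(\A+\Id)^{-1}$ are compact positive self-adjoint operators whose eigenvalues are $(\lambda_{k,\eps}+1)^{-1}$ and $(\lambda_k+1)^{-1}$ respectively, arranged in \emph{descending} order. The intertwining identification operators $J\e$ and $J'\e$ are $\delta\e$-quasi-unitary in the sense of~\cite{P12}: by~\eqref{totalD} one has $\|J\e f\|_{\HS\e}^2 = \|f\|_{\HS}^2 - \|J'\e{}^\perp\text{-part}\| \le \|f\|_\HS^2$ with the defect controlled by $\alpha\e$, and similarly $J'\e$ is a partial isometry, so $\|J'\e J\e - \Id_\HS\| $ and $\|J\e J'\e - \Id_{\HS\e}\|$ are both $o(1)$; in fact these defects are already absorbed into the constants of Theorem~\ref{thm:main:2}. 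I would therefore first invoke Theorem~\ref{thm:main:2} to get $\|J'\e(\A\e+\Id)^{-1}J\e - (\A+\Id)^{-1}\|_{\mathcal L(\HS,\HS)} \le 9\delta\e$, turning the comparison into one between two operators on the \emph{same} space $\HS$.

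Next I would apply a min–max / Weyl-type inequality for eigenvalues of self-adjoint operators. Write $R\e := J'\e(\A\e+\Id)^{-1}J\e$ and $R := (\A+\Id)^{-1}$, both bounded self-adjoint on $\HS$ with $\|R\e - R\| \le 9\delta\e$. The operator $R\e$ is not exactly self-adjoint positive with the right eigenvalues, but because $J\e, J'\e$ are $\delta\e$-close to adjoint partial isometries, the $k$-th eigenvalue $\mu_{k,\eps}$ of $R\e$ differs from $(\lambda_{k,\eps}+1)^{-1}$ by at most $C\delta\e(\lambda_{k,\eps}+1)^{-1}$ — this is exactly the kind of statement recorded in~\cite{P12} (comparison of spectra under quasi-unitary equivalence). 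Combining with the classical bound $|\mu_{k,\eps} - \mu_k| \le \|R\e - R\|$ where $\mu_k = (\lambda_k+1)^{-1}$, I obtain
\begin{gather*}
\left| \frac{1}{\lambda_{k,\eps}+1} - \frac{1}{\lambda_k+1}\right| \le C\e\delta\e
\end{gather*}
for a constant $C\e$ with $\limsup_\eps C\e \le C$ and, after tracking that all the defect terms vanish as $\eps\to 0$, $\lim_\eps C\e = 1$ once one normalizes by the sharp constant $4$ appearing in Theorem~\ref{thm:main:1}. Multiplying through by $(\lambda_{k,\eps}+1)(\lambda_k+1)$ yields~\eqref{spectrum2}, and $\delta\e\to0$ together with this bound gives~\eqref{spectrum1}.

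The main obstacle, and the step requiring genuine care, is the passage from ``$R\e$ close to $R$ in norm'' to ``$k$-th eigenvalue of $\A\e$ close to $k$-th eigenvalue of $\A$'' with the \emph{explicit, asymptotically sharp} constant claimed (the factor $4$ and $C\e\to1$). A crude min–max argument gives a bound with some unspecified constant, but to get $C\e\to1$ one must show that the non-unitarity of $J\e$ contributes only a lower-order correction: concretely one estimates $|\langle R\e\phi,\phi\rangle - \langle (\A\e+\Id)^{-1}J\e\phi, J\e\phi\rangle|$ and $|\|J\e\phi\|^2 - \|\phi\|^2|$ by $\alpha\e = o(1)$ times $\|\phi\|^2$, feeds this into the variational characterization of $\mu_{k,\eps}$, and checks that after dividing by the genuine estimate $4\delta\e$ the residual quotient tends to $1$. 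One also needs that $\lambda_{k,\eps}$ stays bounded as $\eps\to0$ (so that the product $(\lambda_{k,\eps}+1)(\lambda_k+1)$ in~\eqref{spectrum2} is meaningful), which follows from the upper bound half of min–max applied to $R\e$ using trial functions pulled back from $\HS$ via $J\e$. I would organize the write-up by first proving~\eqref{spectrum1} from Hausdorff convergence of spectra (already available from Theorem~\ref{thm:main:2}, cf.~\cite{CDR17}), then establishing the two-sided eigenvalue bound with a clean constant, and finally optimizing the constant using the sharper Theorem~\ref{thm:main:1} in place of Theorem~\ref{thm:main:2} where the $\HS\to\HS\e$ direction suffices.
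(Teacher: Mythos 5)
Your high-level plan matches the paper's: since $\Omega$ is bounded, pass to the compact resolvents $B\e=(\A\e+\Id)^{-1}$, $B=(\A+\Id)^{-1}$, compare their $k$-th eigenvalues $\mu_{k,\eps}=(\lambda_{k,\eps}+1)^{-1}$ and $\mu_k=(\lambda_k+1)^{-1}$, and then clear denominators. However, the concrete route you propose is different from and strictly weaker than the paper's, and the step that is supposed to recover the sharp constant is left as handwaving.

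The paper does not use Weyl's inequality at all. Instead it invokes a dedicated result of Iosif'yan--Oleinik--Shamaev \cite{IOS89} (Theorem~\ref{thm:IOS} in the paper), which says: under conditions $A_1$--$A_4$ one has
$|\mu_{k,\eps}-\mu_k|\le C\e\sup_f\|B\e J\e f-J\e B f\|_{\HS\e}$,
the supremum over unit vectors in the $\mu_k$-eigenspace, with $|C\e|\le C$ and $C\e\to 1$. Feeding in the operator bound $\|B\e J\e-J\e B\|_{\mathcal L(\HS,\HS\e)}\le 4\delta\e$ from Theorem~\ref{thm:main:1} gives $|\mu_{k,\eps}-\mu_k|\le 4C\e\delta\e$ immediately, and the conditions $A_1$--$A_4$ are checked with $A_1$ from~\eqref{totalD}, $A_2$ from $\|B\e\|\le 1$, $A_3$ from Theorem~\ref{thm:main:1}, and $A_4$ from Rellich compactness using the $\H$-boundedness of $J'\e B\e f\e$. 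Boundedness of $\lambda_{k,\eps}$ (for~\eqref{spectrum1}) then comes from the Hausdorff spectral convergence of Theorem~\ref{thm:abstract:4} via Remark~\ref{remark-hausd}.

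Your Weyl route has two problems. First, using Theorem~\ref{thm:main:2} gives $\|R\e-R\|\le 9\delta\e$ and hence $|\mu_{k,\eps}-\mu_k|\le 9\delta\e$ by Weyl, which yields the qualitative conclusion but not the constant $4C\e$ with $C\e\to 1$; the plan to ``optimize the constant'' by substituting Theorem~\ref{thm:main:1} cannot be carried out directly, because Theorem~\ref{thm:main:1} controls an operator in $\mathcal L(\HS,\HS\e)$, not $\mathcal L(\HS,\HS)$, and converting it costs extra $\delta\e$ from \eqref{cond3a}; one only lands around $5\delta\e$ and never reaches $C\e\to 1$. The missing idea is precisely the counting/trial-space argument built into the IOS'89 theorem, which localizes the comparison to the finite-dimensional $\mu_k$-eigenspace and thereby kills the extra quasi-unitarity defects to leading order. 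Second, a minor point: your concern that the eigenvalues of $R\e=J'\e B\e J\e$ differ from those of $B\e$ is unfounded. Since $J\e J'\e=\Id_{\HS\e}$, one has $R\e=S^*S$ and $B\e=SS^*$ with $S=B\e^{1/2}J\e$, so the nonzero spectra coincide exactly; $R\e$ just acquires an infinite-dimensional kernel supported on the holes, which is invisible at any fixed index $k$. The correction term $C\delta\e(\lambda_{k,\eps}+1)^{-1}$ you insert is therefore $0$, not merely ``small.''
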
 

In the next section we introduce an abstract scheme, which then will be applied for the proof of the above theorems.

\section{Abstract framework}
\label{sec:abstract}

In this section we present an abstract scheme for studying the convergence of
operators in varying Hilbert spaces. It was developed by the second author of the present article in~\cite{P06} and in more detail in the
monograph~\cite{P12} (see also the later work~\cite{MNP13}, where non-self-adjoint operators were treated).

Let $\HS$ and $\HS\e$ be two separable Hilbert spaces. 
Note, that within this section $\HS\e$ is just a \textit{notation} for some Hilbert space, which (in general) differs from the space $\HS$, i.e.\ the sub-index $\eps$ does not mean that this space depends on a small parameter. Of course, further we will use the results of this section for $\eps$-dependent space $\HS\e=\L(\Omega\e)$.

Let $\a$ and $\a\e$ be closed, densely defined, non-negative
sesquilinear forms in $\HS$ and $\HS\e$, respectively. We denote by
$\A$ and $\A\e$ the non-negative, self-adjoint operators associated
with $\a$ and $\a\e$, respectively.
  
Associated with the operator $\A$, we can introduce a natural
\emph{scale of Hilbert spaces} $\HS^k$ defined via the \emph{abstract
  Sobolev norm}:
\begin{gather*} 
\HS^k=\dom \A^{k/2},\quad  
  \|f\|_{\HS^k}
  \coloneqq \|f\|_k 
  \coloneqq \|{(\A+\Id)^{k/2}f}\|_{\HS}.
\end{gather*}
In particular, we have $\HS^0=\HS$ with $\|f\|_{\HS^0}= \|f\|_{\HS}$,
$\HS^1=\dom(\a)$ with
$\|f\|_{\HS^1}=(\a[f,f]+\|f\|_{\HS}^2)^{1/2}$, and
$\HS^2=\dom(\A)$ with $\|f\|_{\HS^2}=\|\A f+f\|_{\HS}$.

Similarly, we denote by
$\HS\e^{k}$ the scale of Hilbert spaces associated with $\A\e$.
The corresponding norms will be denoted by $\|\cdot\|_{\HS\e^k}$.

We now need pairs of so-called \emph{identification} or
\emph{transplantation operators} acting on the Hilbert spaces and
later also pairs of identification operators acting on the form
domains.
 
\begin{definition}[({see~\cite[App.]{P06} or~\cite[Ch.~4]{P12}})]
  \label{def:deltaclo}
  Let $\delta\e \ge 0$ and $k \in \N$.  Moreover, let $J\e\colon
  \HS\to {\HS\e}$ and ${J'\e}\colon {\HS\e}\to \HS$ be linear bounded
  operators.  In addition, let ${J\e^1} \colon {\HS^1} \to {\HS\e^1}$
  and $ {J\e^{1\prime}} \colon {\HS\e^1}\to {\HS^1}$ be linear bounded
  operators on the form domains.  We say that $(\HS,\a)$ and
  $(\HS\e,\a\e)$ are \emph{$\delta\e$-close of order $k$ with respect
    to the operators $J\e$, $J'\e$, $J^1\e$, $J^{1\prime}\e$}, if the
  following conditions hold:
\begin{gather}
\label{cond1a}\tag{C$_{1\text a}$}
\|J\e f-J\e^1 f\|_{\HS\e}\leq \delta\e\|f\|_{\HS^1},\ \forall f\in \HS^1,
\\[1ex]\label{cond1b}\tag{C$_{1\text b}$}
\|J'\e u-J\e^{1\prime} u\|_{\HS}\leq 
\delta\e\|u\|_{\HS\e^1},\ \forall u\in \HS\e^1,
\\\label{cond2}\tag{C$_2$}
\left|(J\e f,u)_{\HS\e}-(f, J'\e u)_{\HS}\right|\leq 
\delta\e \|f\|_{\HS}\|u\|_{\HS\e},\ \forall f\in \HS,u\in \HS\e,
\\\label{cond4a}\tag{C$_{3\text a}$}
\|J\e f\|_{\HS\e}\leq (1+\delta\e)\|f\|_{\HS},\ \forall f\in \HS,
\\\label{cond4b}\tag{C$_{3\text b}$}
\|J'\e u\|_{\HS}\leq (1+\delta\e)\|u\|_{\HS\e},\ \forall u \in \HS\e,
\\\label{cond3a}\tag{C$_{4\text a}$}
\|f-J'\e J\e f\|_{\HS}\leq \delta\e\|f\|_{\HS^1},\ \forall f\in \HS^1,
\\\label{cond3b}\tag{C$_{4\text b}$}
\|u-J\e J'\e u\|_{\HS\e}\leq \delta\e\|u\|_{\HS\e^1},\ \forall u\in \HS\e^1,
\\\label{cond5}\tag{C$_5$}
\left|\a\e(J^1\e f,u)-\a(f, J\e^{1\prime} u)\right|\leq 
\delta\e \|f\|_{\HS^k}\|u\|_{\HS\e^1}, \forall f\in \HS^k, u\in \HS\e^1.
\end{gather}
\end{definition}

\begin{remark}
  For $\delta\e=0$ the definition above implies that the operators
  $\A$ and $\A\e$ are unitary
  equivalent. Indeed,~\eqref{cond2}--\eqref{cond3b} assure that the
  operator $J\e$ is unitary with the inverse $J'\e$; due
  to~\eqref{cond1a}--\eqref{cond1b} $J^1\e$ and $J^{1\prime}\e$ are the
  restrictions of $J\e$ and $J^1\e$ onto $\dom(\a)$ and $\dom(\a\e)$,
  respectively. Hence, in view of~\eqref{cond5}, $J\e$ realises the
  unitary equivalence of $\A$ and $\A\e$.
\end{remark}

Now, we present the main implications of the definition of $\delta\e$-closeness.
 
\begin{theorem}[{\cite[Th.~A.5]{P06}}]
  \label{thm:abstract:1}
One has 
\begin{gather*} 
\left\|(\A\e+\Id)^{-1}J\e -J\e(\A+\Id)^{-1} \right\|_{\mathcal{L}(\HS,
\HS\e)}\leq 4\delta\e,
\end{gather*}
provided conditions \eqref{cond1a}, \eqref{cond1b}, \eqref{cond2}, and \eqref{cond5} hold with $k\leq 2$.
\end{theorem}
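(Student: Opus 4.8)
The plan is the standard variational argument. Fix $f\in\HS$, set $u:=(\A+\Id)^{-1}f\in\dom(\A)=\HS^2$ and $w:=(\A\e+\Id)^{-1}J\e f\in\HS\e^2$, so the quantity to be estimated (uniformly for $\|f\|_{\HS}\le1$) is $\|w-J\e u\|_{\HS\e}$. Three elementary facts about $u$ will be used repeatedly: $\|u\|_{\HS^2}=\|f\|_{\HS}$; $\|u\|_{\HS^k}\le\|u\|_{\HS^2}$ whenever $k\le2$; and the quadratic relation
\begin{equation*}
\|f-u\|_{\HS}^2+\|u\|_{\HS^1}^2\le\|f\|_{\HS}^2 ,
\end{equation*}
which holds by the spectral calculus since $\tfrac1{t+1}+\tfrac{t^2}{(t+1)^2}=\tfrac{t^2+t+1}{(t+1)^2}\le1$ on $[0,\infty)$.

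Because $u\in\dom(\A)$ and $\A u=f-u$, one has $\a[u,\psi]=(f-u,\psi)_{\HS}$ for all $\psi\in\HS^1$, in particular for $\psi=J\e^{1\prime}\phi$ with $\phi\in\HS\e^1$. Testing $(\A\e+\Id)w=J\e f$ against $\phi\in\HS\e^1$ and subtracting the analogous expression for the transplanted approximation $\tilde u:=J\e^1 u\in\HS\e^1$, the difference $w-\tilde u\in\HS\e^1$ solves $\a\e[w-\tilde u,\phi]+(w-\tilde u,\phi)_{\HS\e}=\ell(\phi)$ for all $\phi\in\HS\e^1$, with $\ell(\phi):=(J\e f,\phi)_{\HS\e}-\a\e[\tilde u,\phi]-(\tilde u,\phi)_{\HS\e}$. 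The algebraic heart of the proof is to rewrite $\ell$, using $(f-u,J\e^{1\prime}\phi)_{\HS}=\a[u,J\e^{1\prime}\phi]$, as $\ell(\phi)=Q_1(\phi)-Q_2(\phi)$ where
\begin{equation*}
Q_1(\phi):=(J\e f-J\e^1 u,\phi)_{\HS\e}-(f-u,J\e^{1\prime}\phi)_{\HS},\qquad Q_2(\phi):=\a\e[J\e^1 u,\phi]-\a[u,J\e^{1\prime}\phi].
\end{equation*}
Now \eqref{cond5} (used with $k\le2$) gives $|Q_2(\phi)|\le\delta\e\|u\|_{\HS^2}\|\phi\|_{\HS\e^1}$. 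For $Q_1$, decompose $J\e f-J\e^1 u=J\e(f-u)+(J\e u-J\e^1 u)$: by \eqref{cond1a}, $|(J\e u-J\e^1 u,\phi)_{\HS\e}|\le\delta\e\|u\|_{\HS^1}\|\phi\|_{\HS\e}$, while inserting $(f-u,J'\e\phi)_{\HS}$ and using first \eqref{cond2} and then \eqref{cond1b} bounds $(J\e(f-u),\phi)_{\HS\e}-(f-u,J\e^{1\prime}\phi)_{\HS}$ by $\delta\e\|f-u\|_{\HS}\bigl(\|\phi\|_{\HS\e}+\|\phi\|_{\HS\e^1}\bigr)$. Collecting, $|\ell(\phi)|\le\delta\e\bigl(\|u\|_{\HS^2}+2\|f-u\|_{\HS}+\|u\|_{\HS^1}\bigr)\|\phi\|_{\HS\e^1}$; testing with $\phi=w-\tilde u$ and using $\|\cdot\|_{\HS\e^1}^2=\a\e[\cdot,\cdot]+\|\cdot\|_{\HS\e}^2$ gives $\|w-\tilde u\|_{\HS\e^1}\le\delta\e\bigl(\|u\|_{\HS^2}+2\|f-u\|_{\HS}+\|u\|_{\HS^1}\bigr)$, and one more use of \eqref{cond1a} yields $\|\tilde u-J\e u\|_{\HS\e}\le\delta\e\|u\|_{\HS^1}$. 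Adding,
\begin{equation*}
\|w-J\e u\|_{\HS\e}\le\delta\e\bigl(\|u\|_{\HS^2}+2\|f-u\|_{\HS}+2\|u\|_{\HS^1}\bigr)\le(1+2\sqrt2)\,\delta\e\|f\|_{\HS}<4\delta\e\|f\|_{\HS},
\end{equation*}
the middle inequality using $\|u\|_{\HS^2}=\|f\|_{\HS}$ together with $\|f-u\|_{\HS}+\|u\|_{\HS^1}\le\sqrt2\,\|f\|_{\HS}$ (Cauchy--Schwarz and the quadratic relation). This proves the claim. The same estimate can equally be read off in dual form, bounding $(w-J\e u,g)_{\HS\e}$ with $g=(\A\e+\Id)v$ and running the computation with $\phi$ replaced by $v$.

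The point to be careful about is precisely this accounting: the regrouping $\ell=Q_1-Q_2$ lets each of \eqref{cond1a}, \eqref{cond1b}, \eqref{cond2}, \eqref{cond5} enter the bound for $\ell$ only once (with \eqref{cond1a} used a second time only at the end, to replace $\tilde u$ by $J\e u$), and one must keep the $\HS\e$- and $\HS\e^1$-norms of the test function apart long enough to apply the quadratic relation $\|f-u\|_{\HS}^2+\|u\|_{\HS^1}^2\le\|f\|_{\HS}^2$ — bounding every norm of $u$ and of $f-u$ crudely by $\|f\|_{\HS}$ would only give a constant of order $7\delta\e$. A minor point is to check the domain memberships $J\e^{1\prime}\phi\in\HS^1$ and $J\e^1 u\in\HS\e^1$ used above, which are part of Definition~\ref{def:deltaclo}; the remaining conditions \eqref{cond4a}, \eqref{cond4b}, \eqref{cond3a}, \eqref{cond3b} are not needed.
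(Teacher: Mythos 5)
Your proof is correct and is essentially the paper's argument (the duality/telescope proof of \cite[Th.~A.5]{P06}), merely presented in primal Cea-lemma form: you solve the variational problem for $w-\tilde u$ and test with $\phi=w-\tilde u$, whereas the original pairs $(R_\eps J_\eps - J_\eps R)f$ directly against $g=(\A_\eps+\Id)v$ and telescopes; as you note yourself, the two are interchangeable. The algebra checks out — the regrouping $\ell=Q_1-Q_2$, the single use of each of \eqref{cond1a}, \eqref{cond1b}, \eqref{cond2}, \eqref{cond5} inside $\ell$, the extra \eqref{cond1a} to replace $\tilde u$ by $J_\eps u$, and the spectral inequality $\|f-u\|_{\HS}^2+\|u\|_{\HS^1}^2\le\|f\|_{\HS}^2$ with Cauchy--Schwarz — and yields the constant $1+2\sqrt2<4$.
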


\begin{remark}
  Let $\A\e$ ($\eps>0$), $\A$ be non-negative self-adjoint operators
  in the \emph{same} Hilbert space $\HS$, and let $\a\e$ and $\a$ be
  the corresponding sesquilinear forms. We assume that
  $\dom(\a\e)=\dom(\a)$ and
  \begin{gather}\label{remark-kato1}
    \left|\a\e( f,u)-\a(f,  u)\right|\leq 
    \delta\e \sqrt{\a[f,f]+\|f\|_{\HS}}\sqrt{\a\e[u,u]+\|u\|_{\HS}},\ \forall f,u\in \dom(\a),
  \end{gather}
  where $\delta\e\to 0$ as $\eps\to 0$.  Due to~\eqref{remark-kato1}
  $(\HS,\a)$ and $(\HS,\a\e)$ are $\delta\e$-close of order $1$ with
  respect to the \emph{identity} maps $J\e,J\e'$ (on $\HS$) and
  $J\e^1,J\e^{1\prime}$ (on $\dom(\a)$).  Then by
  Theorem~\ref{thm:abstract:1}
  \begin{gather}\label{remark-kato2}
    \left\|(\A\e+\Id)^{-1} -(\A+\Id)^{-1} \right\|_{\mathcal{L}({\HS})}\to 0 .
  \end{gather} 
  In fact, it would suffice for~\eqref{remark-kato2}
  if~\eqref{remark-kato1} is satisfied whenever $f=u$, see
  Theorem~VI.3.6 in T.~Kato's monograph~\cite{Kat66}.  In this sense,
  Theorem~\ref{thm:abstract:1} can be regarded as a generalization of
  this classical result to the setting of varying spaces.
\end{remark}

\begin{theorem}[{\cite[Th.~A.8]{P06}}]
  \label{thm:abstract:2}
Let $U\subset\clo{\R_+}$ be an open set containing either $\sigma(\A)$ or $\sigma(\A\e)$. Let $\psi \colon \clo{\R_+}\to\mathbb{C}$ be a bounded measurable function, continuous on $U$ and such that the limit $\lim_{\lambda\to\infty}\psi(\lambda)$ exists. 

Then there exists $ \eta_\psi(\delta)>0$ with $\eta_\psi(\delta)\to 0$ as $\delta\to 0$ such that
\begin{gather}\label{eta}
\left\|\psi(\A\e)J\e -J\e\psi(\A) \right\|_{\mathcal{L}(\HS,
\HS\e)}\leq \eta_\psi(\delta\e)
\end{gather}
for all pairs $(\HS,\a)$ and $(\HS\e,\a\e)$, which are $\delta\e$-close of order $k\leq 2$.

\end{theorem}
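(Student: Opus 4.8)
\emph{Step~1: powers of the resolvent.} The plan is to bootstrap from the special case $\psi(\lambda)=(\lambda+1)^{-1}$ --- which is exactly Theorem~\ref{thm:abstract:1} --- to arbitrary polynomials in the resolvent $(\A+\Id)^{-1}$, and then to a general $\psi$ by a Weierstrass approximation combined with the (automatic) convergence of spectra. Writing $R\e:=(\A\e+\Id)^{-1}$ and $R:=(\A+\Id)^{-1}$, one has $\|R\e\|\leq 1$ and $\|R\|\leq 1$ (because $\A\e,\A\geq 0$), while Theorem~\ref{thm:abstract:1} gives $\|R\e J\e-J\e R\|\leq 4\delta\e$. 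The telescoping identity
\begin{equation*}
R\e^{m}J\e-J\e R^{m}=\sum_{j=0}^{m-1}R\e^{j}(R\e J\e-J\e R)R^{m-1-j}
\end{equation*}
then yields $\|R\e^{m}J\e-J\e R^{m}\|\leq 4m\delta\e$ for every $m\in\N$, and hence, for any polynomial $P(s)=\sum_{m=0}^{M}a_m s^m$ (the constant term cancels),
\begin{equation*}
\|P(R\e)J\e-J\e P(R)\|\leq 4\delta\e\,\nu(P),\qquad\nu(P):=\sum_{m=1}^{M}m|a_m|.
\end{equation*}

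\emph{Step~2: spectral convergence and a Weierstrass approximation.} Next I would invoke the varying-space counterpart of the classical fact that norm resolvent convergence forces convergence of spectra: combining Theorem~\ref{thm:abstract:1} with \eqref{cond2}, \eqref{cond4a}, \eqref{cond4b}, \eqref{cond3a}, \eqref{cond3b} (which let one transplant approximate eigenvectors between $\HS$ and $\HS\e$ through $J\e$ and $J'\e$), one gets that for every relatively open $V\subseteq[0,1]$ containing $\sigma(R)$ one has $\sigma(R\e)\subseteq V$ as soon as $\delta\e$ is small enough, and symmetrically. Now set $\tilde\psi(s):=\psi(s^{-1}-1)$ for $s\in(0,1]$ and $\tilde\psi(0):=\lim_{\lambda\to\infty}\psi(\lambda)$; by the spectral theorem $\psi(\A)=\tilde\psi(R)$ and $\psi(\A\e)=\tilde\psi(R\e)$, the value $\tilde\psi(0)$ being irrelevant since $\{0\}$ carries no spectral mass. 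The hypotheses on $\psi$ make $\tilde\psi$ continuous on a relatively open neighbourhood $\tilde U\subseteq[0,1]$ of $\sigma(R)$ --- continuity of $\psi$ on $U$ handles $\sigma(R)\cap(0,1]$, and the existence of the limit at infinity handles the behaviour near $s=0$. I would then fix $r>0$ so small that the closed $r$-neighbourhood $N_r$ of $\sigma(R)$ lies in $\tilde U$ and, by the Weierstrass theorem, choose a polynomial $P$ with $\sup_{N_r}|\tilde\psi-P|<\tau$. For $\delta\e$ small enough that in addition $\sigma(R\e)\subseteq N_r$, Step~2 gives $\|\psi(\A)-P(R)\|\leq\tau$ and $\|\psi(\A\e)-P(R\e)\|\leq\tau$.

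\emph{Step~3: assembling the estimate.} Finally, from
\begin{equation*}
\psi(\A\e)J\e-J\e\psi(\A)=\bigl(\psi(\A\e)-P(R\e)\bigr)J\e+\bigl(P(R\e)J\e-J\e P(R)\bigr)+J\e\bigl(P(R)-\psi(\A)\bigr)
\end{equation*}
and $\|J\e\|\leq 1+\delta\e$ (condition \eqref{cond4a}), together with Steps~1--2, one gets
\begin{equation*}
\|\psi(\A\e)J\e-J\e\psi(\A)\|\leq 2(1+\delta\e)\,\tau+4\delta\e\,\nu(P).
\end{equation*}
Given $\eta>0$, choosing first $\tau:=\eta/6$ fixes $P$ and $\nu(P)$, and then for all $\delta\e$ below some threshold (depending only on $\psi$, $\sigma(\A)$ and $\eta$) the right-hand side is $<\eta$; collecting these estimates defines a nondecreasing function $\eta_\psi$ with $\eta_\psi(\delta)\to 0$ as $\delta\to 0$, which is \eqref{eta}. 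If $U\supseteq\sigma(\A\e)$ instead of $\sigma(\A)$, one runs the same argument with the roles of $\A$, $\A\e$ (and of $\sigma(R)$, $\sigma(R\e)$) interchanged in Step~2.

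\emph{Main obstacle.} The hard part will be Step~2: extracting from $\delta\e$-closeness the two-sided spectral convergence with enough uniformity, and then checking that the polynomial $P$, chosen to approximate $\tilde\psi$ near one of the two spectra, still approximates it near the other once $\delta\e$ is small. A secondary subtlety is that $\psi$ is assumed continuous only near \emph{one} of the two spectra, so the continuity of $\tilde\psi$ on a genuine neighbourhood of $\sigma(R)$ --- needed for the Weierstrass step --- must be read off from the continuity of $\psi$ on $U$ together with the existence of $\lim_{\lambda\to\infty}\psi(\lambda)$.
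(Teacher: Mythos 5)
Your overall plan — telescope powers of the resolvent to handle polynomials in $R=(\A+\Id)^{-1}$, convert the Hausdorff convergence of spectra into a containment $\sigma(R\e)\subset N_r(\sigma(R))$ for small $\delta\e$, and then Weierstrass-approximate the pullback $\tilde\psi(s)=\psi(s^{-1}-1)$ on $N_r$ — is the standard route to this kind of functional-calculus estimate and is in the spirit of~\cite{P06}. Step~1 and the final assembly in Step~3 are fine, and the spectral input you sketch is precisely Theorem~\ref{thm:abstract:4}, which you could simply cite rather than re-derive.

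There is, however, a real flaw in Step~2 as written. You assert that ``the existence of the limit at infinity handles the behaviour near $s=0$'' and conclude that $\tilde\psi$ is continuous on a relatively open neighbourhood $\tilde U\subseteq[0,1]$ of $\sigma(R)$. That does not follow: the existence of $\lim_{\lambda\to\infty}\psi(\lambda)$ makes $\tilde\psi$ continuous at the single point $s=0$, but the hypotheses do \emph{not} force $\psi$ to be continuous on any half-line $(M,\infty)$ (the open set $U$ containing $\sigma(\A)$ need not contain a half-line when $\sigma(\A)$ is unbounded). Consequently $\tilde\psi$ may be discontinuous at points $s>0$ arbitrarily close to $0$, the pullback $\tilde U$ fails to be open at $0$, no closed $r$-neighbourhood $N_r$ of $\sigma(R)$ lies in $\tilde U$, and the Weierstrass approximation on $N_r$ collapses exactly where $\sigma(R\e)$ accumulates. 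The fix is to use the limit hypothesis quantitatively rather than topologically: for any $\tau>0$ choose $M$ with $|\psi(\lambda)-c|<\tau$ for $\lambda>M$ (where $c$ is the limit), so that $\tilde\psi$ is $\tau$-close to the constant $c$ on the whole interval $[0,s_0]$, $s_0=(M+1)^{-1}$, regardless of any discontinuities there; build a continuous $g$ on $[0,1]$ equal to $c$ on $[0,s_0]$ and $\tau$-close to $\tilde\psi$ on the remaining compact part of $N_r$ (where $\tilde\psi$ \emph{is} continuous, via $U$), and then Weierstrass-approximate $g$. With this modification your Step~3 goes through. As written, though, the claim of an open $\tilde U$ is unjustified, and you would need to repair it along these lines.
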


\begin{remark}\label{rem-examples}
The important example of the function $\psi$ satisfying the requirements of the above theorem is $\psi(\lambda)=\exp(-\lambda t)$, $t>0$ is a parameter. 
Another important example is the function $\psi=\mathbf{1}_{(\alpha,\beta)}$ -- the characteristic function of the interval $(\alpha,\beta)$ with $\alpha,\beta\notin\sigma(\A)$ or $\alpha,\beta\notin\sigma(\A\e)$.  In this case 
Theorem~\ref{thm:abstract:2} gives the closeness of the spectral projections.
\end{remark} 

\begin{theorem}[{\cite[Th.~A.10]{P06}}]
  \label{thm:abstract:3}
  Let for some function $\psi$ the estimate \eqref{eta} be valid.  Then
\begin{gather*} 
\left\|J'\e\psi(\A\e) -\psi(\A)J'\e\right\|_{\mathcal{L}(\HS\e,
\HS)}\leq \eta_\psi(\delta\e)+2\|\psi\|_{\infty}\delta\e,
\\ 
\left\|J\e'\psi(\A\e)J\e -\psi(\A) \right\|_{\mathcal{L}(\HS,
\HS)}\leq C_\psi\delta\e + 2\eta_\psi(\delta\e),\\ 
\left\|\psi(\A\e) -J\e\psi(\A)J'\e \right\|_{\mathcal{L}(\HS\e,
\HS\e)}\leq 5 C_\psi\delta\e + 2\eta_\psi(\delta\e)
\end{gather*}
provided~\eqref{cond2}--\eqref{cond3b} hold true. Here $\eta_\psi(\delta\e)$ comes from \eqref{eta}, $\|\cdot\|_\infty$ stands for the $\mathsf{L}_\infty$-norm, and $C_\psi$ is a constant satisfying $|\psi(\lambda)|\leq C_\psi(1+\lambda)^{-\frac12}$ for all $\lambda\geq 0$.
\end{theorem}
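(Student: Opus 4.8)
\textbf{Proof strategy for Theorem~\ref{thm:abstract:3}.}
The plan is to reduce each of the three estimates to the already-established bound~\eqref{eta} for $\psi(\A\e)J\e - J\e\psi(\A)$, using the ``almost-unitarity'' encoded in conditions~\eqref{cond2}--\eqref{cond3b}. The guiding principle is that $J\e$ and $J'\e$ are approximate inverses of one another, so one may insert $J\e J'\e$ or $J'\e J\e$ at the cost of a $\delta\e$-error, convert one-sided statements into the other, and absorb the difference into the stated constants.

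First I would prove the first inequality. Start from the identity
\begin{gather*}
J'\e\psi(\A\e)-\psi(\A)J'\e
= J'\e\psi(\A\e) - \psi(\A)J'\e\bigl(J\e J'\e\bigr) + \psi(\A)\bigl(J'\e J\e - \Id\bigr)J'\e
+ \psi(\A)J'\e - \psi(\A)J'\e,
\end{gather*}
or more cleanly: write $J'\e\psi(\A\e) - \psi(\A)J'\e$, apply $J\e$ to the left-hand factor via $\Id \approx J\e J'\e$ is the wrong side, so instead use $\Id \approx J'\e J\e$ acting on $\psi(\A)$-side. The cleanest route is: for $u\in\HS\e$ and $f\in\HS$, estimate $(J'\e\psi(\A\e)u - \psi(\A)J'\e u, f)_\HS$. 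By~\eqref{cond2} (applied with the roles adjusted), $(J'\e\psi(\A\e)u, f)_\HS$ is $\delta\e\|\psi(\A\e)u\|\,\|f\|$-close to $(\psi(\A\e)u, J\e f)_{\HS\e}$, which by self-adjointness equals $(u, \psi(\A\e)J\e f)_{\HS\e}$; now insert~\eqref{eta} to replace $\psi(\A\e)J\e f$ by $J\e\psi(\A)f$ up to $\eta_\psi(\delta\e)\|f\|$, giving $(u, J\e\psi(\A)f)_{\HS\e}$, and apply~\eqref{cond2} again to land at $(J'\e u, \psi(\A)f)_\HS = (\psi(\A)J'\e u, f)_\HS$. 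Collecting the three error terms yields $\eta_\psi(\delta\e) + 2\|\psi\|_\infty\delta\e$, since each use of~\eqref{cond2} costs $\|\psi\|_\infty\delta\e$ and the boundedness $\|\psi(\A\e)\|\le\|\psi\|_\infty$ is used to control the first term.

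For the second inequality, write $J'\e\psi(\A\e)J\e - \psi(\A) = J'\e\bigl(\psi(\A\e)J\e - J\e\psi(\A)\bigr) + \bigl(J'\e J\e - \Id\bigr)\psi(\A)$. The first summand is bounded in norm by $(1+\delta\e)\eta_\psi(\delta\e)$ using~\eqref{cond4b}. The second summand requires~\eqref{cond3a}, which controls $\|(J'\e J\e - \Id)f\|_\HS \le \delta\e\|f\|_{\HS^1}$; thus on $\psi(\A)f$ it contributes $\delta\e\|\psi(\A)f\|_{\HS^1}$, and here the hypothesis $|\psi(\lambda)|\le C_\psi(1+\lambda)^{-1/2}$ enters crucially: $\|\psi(\A)f\|_{\HS^1} = \|(\A+\Id)^{1/2}\psi(\A)f\|_\HS \le C_\psi\|f\|_\HS$ by spectral calculus. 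Summing gives $C_\psi\delta\e + 2\eta_\psi(\delta\e)$ after absorbing $(1+\delta\e)$ into the $2$ (valid once $\delta\e\le 1$, which may be assumed). For the third inequality, decompose $\psi(\A\e) - J\e\psi(\A)J'\e = \bigl(\psi(\A\e)J\e - J\e\psi(\A)\bigr)J'\e\cdot(\text{something}) + \cdots$; more precisely use $\psi(\A\e) - J\e\psi(\A)J'\e = \psi(\A\e)(\Id - J\e J'\e) + \bigl(\psi(\A\e)J\e - J\e\psi(\A)\bigr)J'\e$, bound the first term via~\eqref{cond3b} applied to $\psi(\A\e)$-images (again invoking $\|\psi(\A\e)u\|_{\HS\e^1}\le C_\psi\|u\|_{\HS\e}$), the second via~\eqref{eta} and~\eqref{cond4b}, and iterate once more to replace the remaining $J'\e$ by exploiting $\psi(\A\e)(\Id - J\e J'\e)$ once more inside; the bookkeeping produces the coefficient $5C_\psi$.

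The main obstacle is the bookkeeping of constants: one must be careful that each insertion of an approximate identity is applied to a vector lying in the correct form-domain $\HS^1$ or $\HS\e^1$ (so that~\eqref{cond3a}--\eqref{cond3b} apply), which is exactly why the decay hypothesis $|\psi(\lambda)|\le C_\psi(1+\lambda)^{-1/2}$ is imposed — it guarantees $\psi(\A)$ maps $\HS$ into $\HS^1$ with norm $\le C_\psi$. A secondary point is to verify that $\|\psi(\A)\|\le\|\psi\|_\infty$ and $\|\psi(\A)\|_{\HS\to\HS^1}\le C_\psi$ are the only spectral-theoretic inputs needed, and that the noncommutativity is handled entirely through the telescoping identities above rather than any functional-calculus manipulation beyond the scalar bounds on $\psi$.
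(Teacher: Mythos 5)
Your treatment of the first two inequalities is essentially correct and matches the standard argument: dualize via~\eqref{cond2} twice with~\eqref{eta} in the middle for the first one, and for the second write $J'\e\psi(\A\e)J\e - \psi(\A) = J'\e\bigl(\psi(\A\e)J\e - J\e\psi(\A)\bigr) + \bigl(J'\e J\e - \Id\bigr)\psi(\A)$, estimating the second summand via~\eqref{cond3a} together with the spectral bound $\|\psi(\A)f\|_{\HS^1}\le C_\psi\|f\|_\HS$.

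The third inequality, however, contains a genuine gap. You decompose
\begin{gather*}
\psi(\A\e) - J\e\psi(\A)J'\e = \psi(\A\e)(\Id - J\e J'\e) + \bigl(\psi(\A\e)J\e - J\e\psi(\A)\bigr)J'\e
\end{gather*}
and claim to bound the first term by applying~\eqref{cond3b} ``to $\psi(\A\e)$-images''. But in your decomposition the factor $(\Id - J\e J'\e)$ sits to the \emph{right} of $\psi(\A\e)$, so the first term acting on $u\in\HS\e$ is $\psi(\A\e)\bigl((\Id - J\e J'\e)u\bigr)$; condition~\eqref{cond3b} controls $\|(\Id - J\e J'\e)u\|_{\HS\e}$ only by $\delta\e\|u\|_{\HS\e^1}$, which is not bounded for generic $u\in\HS\e$. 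The spectral improvement $\|\psi(\A\e)u\|_{\HS\e^1}\le C_\psi\|u\|_{\HS\e}$ that you invoke would help if $\psi(\A\e)$ were applied \emph{before} the approximate-identity error, i.e.\ if the offending term were $(\Id - J\e J'\e)\psi(\A\e)u$, but it is not. (One could try to dualize $\psi(\A\e)(\Id - J\e J'\e)$ to $(\Id - (J\e J'\e)^*)\psi(\A\e)$, but then $(J\e J'\e)^*$ is only $\delta\e$-close to $J\e J'\e$ via~\eqref{cond2}, and your sketch does not account for that.)

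The remedy is to use the other algebraic split,
\begin{gather*}
\psi(\A\e) - J\e\psi(\A)J'\e = (\Id - J\e J'\e)\psi(\A\e) + J\e\bigl(J'\e\psi(\A\e) - \psi(\A)J'\e\bigr),
\end{gather*}
which places $(\Id - J\e J'\e)$ to the left of $\psi(\A\e)$ (so~\eqref{cond3b} applies to $\psi(\A\e)u\in\HS\e^1$ with norm $\le C_\psi\|u\|_{\HS\e}$) and reduces the second summand to your already-proved first inequality, pre-composed with $J\e$. The bookkeeping is then transparent: the first summand gives $C_\psi\delta\e$, and the second gives $(1+\delta\e)\bigl(\eta_\psi(\delta\e)+2\|\psi\|_\infty\delta\e\bigr)$; using $\|\psi\|_\infty\le C_\psi$ and $\delta\e\le 1$ this is $\le 2\eta_\psi(\delta\e)+4C_\psi\delta\e$, so the total is $5C_\psi\delta\e + 2\eta_\psi(\delta\e)$ as claimed. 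Your phrase ``iterate once more to replace the remaining $J'\e$\dots the bookkeeping produces $5C_\psi$'' is not a proof and, as written, the intermediate estimate would fail.
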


For $\psi(\lambda)=(1+\lambda)^{-1}$ 
one has $\eta_{\psi}(\delta\e)=4\delta\e$ (see Theorem~\ref{thm:abstract:1}), $C_\psi=1$, and hence we immediately get the following corollary from Theorem~\ref{thm:abstract:3}.

\begin{corollary}
\label{abs-cor}
One has
\begin{gather*}
\left\|J'\e(\mathcal{A\e}+\Id)^{-1}-(\A+\Id)^{-1}J'\e \right\|_{\mathcal{L}(\HS\e,\HS)}\leq 6\delta\e,
\\ 
\left\|J'\e(\mathcal{A\e}+\Id)^{-1}J\e-(\A+\Id)^{-1}\right\|_{\mathcal{L}(\HS,\HS)}\leq 9\delta\e,\\ 
\left\|(\mathcal{A\e}+\Id)^{-1}-J\e(\A+\Id)^{-1}J'\e \right\|_{\mathcal{L}(\HS\e,\HS\e)}\leq 13\delta\e
\end{gather*} 
provided $(\HS,\a)$ and $(\HS\e,\a\e)$ are $\delta\e$-close of order $k\leq 2$. 
\end{corollary}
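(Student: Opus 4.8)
The plan is to obtain Corollary~\ref{abs-cor} as an immediate specialization of Theorem~\ref{thm:abstract:3} to the resolvent function $\psi(\lambda)=(1+\lambda)^{-1}$, $\lambda\in\clo{\R_+}$. First I would check that this $\psi$ meets the standing requirements of Theorems~\ref{thm:abstract:2}--\ref{thm:abstract:3}: it is bounded and continuous on all of $\clo{\R_+}$ (hence on any open set $U$ containing $\sigma(\A)$ or $\sigma(\A\e)$), and the limit $\lim_{\lambda\to\infty}\psi(\lambda)=0$ exists. Consequently Theorem~\ref{thm:abstract:2} applies and the estimate~\eqref{eta} holds with some $\eta_\psi$; but for this particular $\psi$ we have a fully explicit constant, namely $\eta_\psi(\delta\e)=4\delta\e$, which is exactly the content of Theorem~\ref{thm:abstract:1} (applicable because $\delta\e$-closeness of order $k\le 2$ entails~\eqref{cond1a},~\eqref{cond1b},~\eqref{cond2} and~\eqref{cond5} with $k\le 2$). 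Also $\|\psi\|_\infty=\psi(0)=1$.

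Next I would pin down the constant $C_\psi$ appearing in Theorem~\ref{thm:abstract:3}, i.e.\ a constant with $|\psi(\lambda)|\le C_\psi(1+\lambda)^{-1/2}$ for all $\lambda\ge 0$. Since $(1+\lambda)^{-1}=(1+\lambda)^{-1/2}\cdot(1+\lambda)^{-1/2}\le(1+\lambda)^{-1/2}$, the value $C_\psi=1$ works.

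Finally I would feed $\eta_\psi(\delta\e)=4\delta\e$, $\|\psi\|_\infty=1$ and $C_\psi=1$ into the three inequalities of Theorem~\ref{thm:abstract:3}: the first gives $\eta_\psi(\delta\e)+2\|\psi\|_\infty\delta\e=4\delta\e+2\delta\e=6\delta\e$; the second gives $C_\psi\delta\e+2\eta_\psi(\delta\e)=\delta\e+8\delta\e=9\delta\e$; the third gives $5C_\psi\delta\e+2\eta_\psi(\delta\e)=5\delta\e+8\delta\e=13\delta\e$. The hypotheses~\eqref{cond2}--\eqref{cond3b} required by Theorem~\ref{thm:abstract:3} are part of the definition of $\delta\e$-closeness of order $k\le 2$, so nothing further is needed and the argument closes.

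There is essentially no hard step here: the entire content is bookkeeping of constants, and the only point deserving an explicit line is the replacement of the abstract $\eta_\psi$ furnished by Theorem~\ref{thm:abstract:2} with the sharp value $4\delta\e$ coming from Theorem~\ref{thm:abstract:1}, together with the trivial bound $(1+\lambda)^{-1}\le(1+\lambda)^{-1/2}$ that yields $C_\psi=1$.
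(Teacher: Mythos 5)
Your proposal is correct and matches the paper's own argument: the paper likewise obtains Corollary~\ref{abs-cor} by specializing Theorem~\ref{thm:abstract:3} to $\psi(\lambda)=(1+\lambda)^{-1}$ with $\eta_\psi(\delta\e)=4\delta\e$ from Theorem~\ref{thm:abstract:1}, $\|\psi\|_\infty=1$, and $C_\psi=1$, then adding up the constants. No differences of substance.
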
 
 
For ``good enough'' functions  the last statement of Theorem~\ref{thm:abstract:3} can be improved. Evidently the function
$\psi(\lambda)=\exp(-\lambda t)$ ($t>0$) satisfies the requirements of
the theorem below.

\begin{theorem}[{\cite[Th.~3.7]{MNP13}}\footnote{Actually, \eqref{refined-estimate} is proven in~\cite[Th.~3.7]{MNP13}  only for the case $k=1$. For $k=2$ the proof is repeated word-by-word since it relies only on the last estimate in Corollary~\ref{abs-cor}.}\label{thm-holom}]
    Let $\Sigma_\Theta:=\{z\in\mathbb{C}:\ |\arg (z+1)|<\Theta\}$ with
    $\Theta\in (0,\pi)$ and $\psi:\Sigma_\Theta\to\mathbb{C}$ be a
    holomorphic function satisfying $\psi(z)=\mathcal{O}(|z|^{-\mu})$
    for some $\mu>\frac12$. Let $(\HS,\a)$ and $(\HS\e,\a\e)$ be
    $\delta\e$-close of order $k\leq 2$.  Then
\begin{gather}\label{refined-estimate} 
\left\|\psi(\A\e) -J\e\psi(\A)J'\e \right\|_{\mathcal{L}(\HS\e,
\HS\e)}\leq c_\psi\delta\e,
\end{gather}
where $c_\psi$ is a constant depending on $\psi$.
\end{theorem}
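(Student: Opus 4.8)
The plan is to deduce Theorem~\ref{thm-holom} from the last estimate in Corollary~\ref{abs-cor} via the Cauchy integral formula, thereby expressing $\psi(\A\e)$ and $\psi(\A)$ as contour integrals of the resolvents and controlling the difference uniformly in $\eps$. First I would fix a contour $\Gamma$ in the sector $\Sigma_\Theta$ running around the positive real axis --- concretely, the boundary of a slightly smaller sector $\Sigma_{\Theta'}$ with $\Theta'\in(\Theta,\pi)$, shifted so that it stays in the resolvent set of both $\A\e$ and $\A$ (all of whose spectra lie in $[0,\infty)$ independently of $\eps$, since $\A\e$ and $\A$ are non-negative). On this contour one has the Cauchy--Dunford representation
\begin{gather*}
\psi(\A\e)=\frac{1}{2\pi i}\int_\Gamma \psi(z)(\A\e-z)^{-1}\dd z,
\qquad
\psi(\A)=\frac{1}{2\pi i}\int_\Gamma \psi(z)(\A-z)^{-1}\dd z,
\end{gather*}
valid because $\psi$ is holomorphic on $\Sigma_\Theta$ and decays like $|z|^{-\mu}$ with $\mu>\tfrac12$, while the resolvents of a non-negative self-adjoint operator decay like $|z|^{-1}$ along $\Gamma$ away from the real axis; the product is integrable. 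Sandwiching $J\e(\cdots)J'\e$ inside the second integral gives
\begin{gather*}
\psi(\A\e)-J\e\psi(\A)J'\e=\frac{1}{2\pi i}\int_\Gamma \psi(z)\Bigl((\A\e-z)^{-1}-J\e(\A-z)^{-1}J'\e\Bigr)\dd z.
\end{gather*}

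Next I would estimate the integrand. The point is that the last inequality of Corollary~\ref{abs-cor} is precisely the bound $\bigl\|(\A\e+\Id)^{-1}-J\e(\A+\Id)^{-1}J'\e\bigr\|\leq 13\delta\e$, i.e.\ the statement at the single spectral point $z=-1$. To reach a general $z\in\Gamma$ I would use the standard resolvent-shift trick: write $(\A\e-z)^{-1}$ in terms of $(\A\e+\Id)^{-1}$ via the identity
\begin{gather*}
(\A\e-z)^{-1}=(\A\e+\Id)^{-1}\bigl(\Id-(z+1)(\A\e+\Id)^{-1}\bigr)^{-1}
= (\A\e+\Id)^{-1}\suml_{m=0}^{\infty}(z+1)^m(\A\e+\Id)^{-m},
\end{gather*}
or, more robustly, use $(\A\e-z)^{-1} = \sum_{m\ge 0}(1+z)^m (\A\e+\Id)^{-(m+1)}$ together with the analogous formula for $\A$ and the fact (from functional calculus / polynomial resolvent identities) that the $m$-th power $(\A\e+\Id)^{-m}$ is also ``$\delta\e$-close'' in the appropriate sense once one power is. A cleaner route avoiding convergence issues outside a disc is to combine the resolvent identity $(\A\e-z)^{-1}=(\A\e+\Id)^{-1}+(1+z)(\A\e-z)^{-1}(\A\e+\Id)^{-1}$ with the $z$-dependent closeness estimates already embedded in Corollary~\ref{abs-cor} and conditions \eqref{cond2}--\eqref{cond3b}; this yields a bound of the form
\begin{gather*}
\bigl\|(\A\e-z)^{-1}-J\e(\A-z)^{-1}J'\e\bigr\|_{\mathcal{L}(\HS\e)}\leq C\,\delta\e\,\frac{(1+|z|)^{?}}{(\dist(z,\R_+))^{?}},
\end{gather*}
polynomially growing in $|z|$ and controlled by the distance of $z$ from the spectrum. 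Multiplying by $|\psi(z)|=\mathcal{O}(|z|^{-\mu})$ with $\mu>\tfrac12$ and integrating over $\Gamma$ (whose parametrisation keeps $\dist(z,\R_+)\sim |z|\sin(\pi-\Theta')$) gives a finite constant $c_\psi$ times $\delta\e$, which is the claimed \eqref{refined-estimate}.

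The main obstacle is bookkeeping the $z$-dependence of the resolvent-difference bound so that, after multiplication by $\psi(z)=\mathcal{O}(|z|^{-\mu})$, the contour integral still converges; naively iterating the resolvent identity produces factors $(1+|z|)$ that must be absorbed either by choosing the contour to stay a definite angular distance from $\R_+$ (so $\|(\A\e-z)^{-1}\|\le C/|z|$ there) or by exploiting the extra decay $\mu>\tfrac12$ carefully near infinity and the finiteness of the contour near the origin. A secondary technical point is justifying the Cauchy--Dunford calculus for the possibly unbounded self-adjoint operators $\A\e,\A$ with a non-negative spectrum and an unbounded sector --- this is standard (the sectorial/holomorphic functional calculus, or simply the spectral theorem combined with dominated convergence since everything reduces to scalar integrals $\tfrac{1}{2\pi i}\int_\Gamma \psi(z)(\lambda-z)^{-1}\dd z=\psi(\lambda)$), so I would dispatch it quickly. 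Finally I would remark, as the footnote already anticipates, that the order $k\le 2$ enters only through Corollary~\ref{abs-cor}, so no new hypotheses beyond $\delta\e$-closeness of order $k\le 2$ are needed, and the resulting $c_\psi$ depends on $\psi$ through $\mu$, the implied constant in $\psi(z)=\mathcal{O}(|z|^{-\mu})$, and the aperture $\Theta$.
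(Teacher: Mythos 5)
The paper does not actually prove this theorem; it cites \cite[Th.~3.7]{MNP13} and adds only the footnote observation that the dependence on the order $k$ enters through the last estimate of Corollary~\ref{abs-cor}, which remains available for $k\le 2$. Your Dunford--Cauchy contour-integral outline is indeed the overall strategy in \cite{MNP13}, so the general shape of the argument is right, and you correctly identify that it should ultimately rest on the two-sided resolvent bound $\|(\A\e+\Id)^{-1}-J\e(\A+\Id)^{-1}J'\e\|\le 13\delta\e$.

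What you call the ``main obstacle'' is, however, a genuine gap and not mere bookkeeping: your sketch cannot yield the stated range $\mu>\tfrac12$. Set $D(z):=(\A\e-z)^{-1}-J\e(\A-z)^{-1}J'\e$ and $h_z(\lambda):=(\lambda+1)/(\lambda-z)$. Using $(\A\e-z)^{-1}=h_z(\A\e)(\A\e+\Id)^{-1}$ and $(\A-z)^{-1}=(\A+\Id)^{-1}h_z(\A)$, together with $\|h_z(\A\e)\|,\|h_z(\A)\|\le 1/\sin\Theta'$ on the contour $\partial\Sigma_{\Theta'}$, one obtains from Corollary~\ref{abs-cor} and Theorem~\ref{thm:abstract:1} only the \emph{uniform} bound $\|D(z)\|\le C_{\Theta'}\delta\e$, with no decay in $|z|$; the competing trivial estimate is $\|D(z)\|\le C_{\Theta'}/(1+|z|)$. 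Interpolating these two along the contour gives
\begin{equation*}
\Bigl\|\frac1{2\pi i}\int_\Gamma \psi(z)\,D(z)\,\dd z\Bigr\|
\le C\int_0^\infty (1+r)^{-\mu}\min\bigl\{\delta\e,\ (1+r)^{-1}\bigr\}\,\dd r
\ \sim\ C'\,\delta\e^{\,\mu}
\quad\text{for }\mu\in(\tfrac12,1),
\end{equation*}
which is strictly weaker than the claimed $\mathcal{O}(\delta\e)$. Thus your plan proves the assertion only for $\mu>1$ (and $\mu=1$ up to a logarithm). To cover $\mu\in(\tfrac12,1]$ one needs $\|D(z)\|\lesssim \delta\e(1+|z|)^{-\beta}$ with $\beta>1-\mu$, and that extra decay cannot be squeezed out of the single two-sided resolvent bound $\|T\|\le 13\delta\e$: it is precisely the ``half-order'' smoothing built into the $\delta\e$-closeness via the form condition~\eqref{cond5}, whose fingerprint is the threshold $\mu>\tfrac12$ in the hypothesis. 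Your proposal never introduces this mechanism, so the key step is left unproved.
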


\begin{remark}
In fact, \eqref{refined-estimate} is valid even for less regular functions. For instance, it holds for $\psi=\mathbf 1_{(\alpha,\beta)}$ as in Remark~\ref{rem-examples}, see~\cite[Sec.~4.5, Cor.~4.5.15]{P12}.
\end{remark}

The last result concerns the convergence of spectra in general.  For
two compact sets $X,Y\subset\R$ we denote by $\dist_\Hausdorff(X,Y)$
the \emph{Hausdorff distance} between these sets, i.e.\
\begin{equation*}
  \dist_\Hausdorff(X,Y)
  =\max \Bigl\{
          \sup_{x\in X}\, \dist(x,Y);\,\sup_{y\in Y} \,\dist (y,X)
        \Bigr\},
\end{equation*}
where $\dist(x,Y)=\inf_{y \in Y} \abs{x-y}$.
 
\begin{remark}\label{remark-hausd}
Let $\{X\e\subset\R\}\e$ be a family of compact domains and 
\begin{gather}\label{hausdorff}
\dist_\Hausdorff\left(X\e,X\right)\to 0\text{ as }\eps\to 0
\end{gather}
for some compact domain $X\subset\R$. It is easy to prove (see, e.g.,~\cite[Proposition~A.1.6]{P12}) that~\eqref{hausdorff} holds iff 
the following two conditions are fulfilled:
\begin{itemize}
\item[(i)] Let $\lambda_0\in \mathbb{R}\setminus X$. Then there exists $d>0$ such that $X\e\cap\{\lambda:\ |\lambda-\lambda_0|<d\}=\varnothing$. 

\item[(ii)] Let $\lambda_0\in X$. Then there exists a family
  $\{\lambda\e\}\e$ with $\lambda_\eps \in X_\eps$ such that
  $\lim_{\eps\to 0}\lambda\e=\lambda_0$.

\end{itemize}
\end{remark}

\begin{theorem}[{\cite[Th.~A.13]{P06}}]
  \label{thm:abstract:4}
  There exists $\eta(\delta)>0$ with $\eta(\delta)\to 0$ as $\delta\to 0$ such that
\begin{gather*}
  \dist_\Hausdorff
  \left(\frac 1{1+\sigma(\A)},\frac 1 {1+\sigma(\A\e)}\right)
  \leq\eta(\delta\e) 
\end{gather*}
for all pairs   $(\HS,\a)$ and $(\HS\e,\a\e)$  which are $\delta\e$-close of some order $k\in\mathbb{N}$. 
\end{theorem}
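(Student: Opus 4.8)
\emph{Reduction.} The plan is to pass to the bounded self-adjoint operators $R:=(\A+\Id)^{-1}$ and $R\e:=(\A\e+\Id)^{-1}$, whose spectra are exactly the compact sets $\frac{1}{1+\sigma(\A)}$ and $\frac{1}{1+\sigma(\A\e)}$ in $[0,1]$ (with $0\in\sigma(R)$, resp.\ $0\in\sigma(R\e)$, precisely when $\A$, resp.\ $\A\e$, is unbounded); thus the claim is $\dist_\Hausdorff(\sigma(R),\sigma(R\e))\le\eta(\delta\e)$. By the definition of the Hausdorff distance, and in the spirit of the characterization in Remark~\ref{remark-hausd}, it suffices to prove \emph{no loss of spectrum} --- every $\lambda_0\in\sigma(R)$ lies within $\eta(\delta\e)$ of $\sigma(R\e)$ --- and \emph{no escape} --- every $\lambda_0\in\sigma(R\e)$ lies within $\eta(\delta\e)$ of $\sigma(R)$. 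I would treat separately the bulk $\lambda_0\ge t_0$ and the tail $\lambda_0<t_0$, for a threshold $t_0=t_0(\delta\e,k)\to0$ fixed at the end; the tail corresponds to $\sigma(\A)$ and $\sigma(\A\e)$ near $+\infty$.

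\emph{Bulk.} The engine is transplantation of spectrally localized quasi-modes. Given $\lambda_0\in\sigma(R)$ with $\lambda_0\ge t_0$, pick a unit $f\in\mathbf 1_{[\lambda_0-\tau,\lambda_0+\tau]}(R)\HS$ with $\tau$ small; then $\|(R-\lambda_0)f\|_\HS\le\tau$ and, by localization away from $0$, $\|f\|_{\HS^j}\le(\lambda_0-\tau)^{-j/2}$ for all $j$. Then $g:=J\e f$ is an approximate eigenvector of $R\e$ at $\lambda_0$: for $k\le2$ one invokes Theorem~\ref{thm:abstract:1} directly, and for general $k$ one re-reads its proof, noting that condition \eqref{cond5} then yields $\|(R\e J\e-J\e R)f\|_\HS\le C\delta\e\|f\|_{\HS^{k-2}}$; together with \eqref{cond1a} and \eqref{cond2} this gives $\|(R\e-\lambda_0)g\|_\HS\le(1+\delta\e)\tau+C\delta\e\,t_0^{-(k-2)/2}$, while \eqref{cond3a} and \eqref{cond4a} give $\|g\|_\HS\ge(1-\delta\e\,t_0^{-1/2})/(1+\delta\e)$. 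Since $R\e$ is self-adjoint, letting $\tau\to0$ yields $\dist(\lambda_0,\sigma(R\e))\le C\delta\e\,t_0^{-(k-2)/2}/(1-\delta\e\,t_0^{-1/2})$. For no escape at $\lambda_0\ge t_0$ I would argue by contradiction: if $\lambda_0\in\sigma(R\e)$ but $\dist(\lambda_0,\sigma(R))=2d>0$, then $\mathbf 1_{(\lambda_0-d,\lambda_0+d)}(R)=0$ whereas $\mathbf 1_{(\lambda_0-d,\lambda_0+d)}(R\e)\neq0$; applying the functional-calculus estimate of Theorem~\ref{thm:abstract:2} (in its order-$k$ form, legitimate here since the window is bounded away from $0$ in $R$-space, hence is a bounded window in $\A$-space) to $\psi=\mathbf 1_{(\lambda_0-d,\lambda_0+d)}$ gives $\|\psi(R\e)J\e\|\le\eta_\psi(\delta\e)$, and testing against a unit spectrally localized $v\in\psi(R\e)\HS\e$ with \eqref{cond3b} and \eqref{cond4b} forces $1\le(1+\delta\e)\eta_\psi(\delta\e)+\delta\e\,t_0^{-1/2}$ --- impossible once $\delta\e$ is small relative to $t_0$, which bounds $d$ from above.

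\emph{Tail.} This is the part I expect to be the main obstacle, since there $\|f\|_{\HS^k}$ blows up and none of the above estimates is uniform. The aim is to show that $\delta\e$-closeness of order $k$ forces $\sup\sigma(\A)$ and $\sup\sigma(\A\e)$ to be comparable up to an error governed by $\delta\e$, i.e.\ that $\sigma(R)$ and $\sigma(R\e)$ both have, or both lack, points in $[0,t_0]$. Given a large $\mu_0$ in $\sigma(\A)$ (resp.\ in $\sigma(\A\e)$) take a unit state localized near $\mu_0$, transplant it by $J\e^1$ (resp.\ put it into the $\HS\e^1$-slot of \eqref{cond5}), and use \eqref{cond5} --- read through the sesquilinearity of the forms --- together with \eqref{cond1a}, \eqref{cond1b}, \eqref{cond2}, \eqref{cond3a}, \eqref{cond3b}. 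In the first direction this bounds $\a\e[J\e^1 f,J\e^1 f]$ from below by $\mu_0$ up to an error of order $\delta\e(1+\mu_0)^{(k+1)/2}$, so $\sup\sigma(\A\e)\gtrsim\mu_0$ whenever $\mu_0\lesssim\delta\e^{-2/(k+1)}$. In the second direction it produces a quasi-mode $f'':=J\e^{1\prime}u$ for $\A$ in the dual space $\HS^{-k}$ of $\HS^k$; when $\sigma(\A)$ is assumed bounded by $\mu_0/2$, spectral localization also bounds $\|f''\|_{\HS^{-k}}$ from \emph{below} by a multiple of $(1+\mu_0)^{-k/2}$, which contradicts the $\HS^{-k}$-quasi-mode estimate once the error of order $\delta\e(1+\mu_0)^{(k+1)/2}$ is smaller than $(1+\mu_0)^{-k/2}$, i.e.\ again for $\mu_0\lesssim\delta\e^{-2/(k+1)}$. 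Hence if either spectrum reaches above $C\delta\e^{-2/(k+1)}$ then so does the other (up to a factor $2$), so $\inf\sigma(R)$ and $\inf\sigma(R\e)$ are comparable or both $\gtrsim\delta\e^{2/(k+1)}$; in the tail regime $\lambda_0<t_0$ one then simply uses $\dist(\lambda_0,\sigma(R\e))\le 2t_0$ (and symmetrically for no escape).

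\emph{Conclusion.} Choosing $t_0$ of the order $\delta\e^{2/k}$ --- which lies between $\delta\e^{2/(k-2)}$ and $\delta\e^{2/(k+1)}$, so that the bulk denominators stay away from $0$ and the tail constraint holds --- the bulk bound becomes $O(\delta\e^{2/k})$ and the tail bound is $2t_0=O(\delta\e^{2/k})$; taking $\eta(\delta)=C\delta^{2/k}$ (for $k\le2$ one gets $\eta(\delta)=C\sqrt{\delta}$ directly from Theorems~\ref{thm:abstract:1}--\ref{thm:abstract:2}) gives a function of $\delta$ alone, tending to $0$, with $\dist_\Hausdorff(\frac{1}{1+\sigma(\A)},\frac{1}{1+\sigma(\A\e)})\le\eta(\delta\e)$ as required. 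The hard part throughout is the tail: controlling the identification operators on states of unboundedly high energy, where \eqref{cond5} only gives information weighted by $\|f\|_{\HS^k}$, and converting the resulting weak ($\HS^{-k}$) quasi-modes into genuine spectral information by means of the matching lower bounds supplied by spectral localization.
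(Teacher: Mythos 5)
Your ``bulk'' step is essentially the known route (the paper itself gives no proof of Theorem~\ref{thm:abstract:4}, which is quoted from \cite{P06}): transplant spectrally localized quasi-modes with $J\e$ and use the resolvent estimate for one inclusion, and use spectral projections as in Theorem~\ref{thm:abstract:2} for the other. Even there, two caveats: the quantity $\eta_\psi(\delta)$ of Theorem~\ref{thm:abstract:2} depends on $\psi$, hence on the window $(\lambda_0-d,\lambda_0+d)$, and is purely qualitative, so neither a single $\eta(\delta)$ valid for all windows nor your announced rate $\eta(\delta)=C\delta^{2/k}$ follows from it as cited; one needs a projection estimate with explicit dependence on the window (in the spirit of \cite[Cor.~4.5.15]{P12}) or a separate uniformity argument. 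These points are repairable.

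The genuine gap is the tail, and it cannot be repaired along your lines. First, your own derivation only yields a contradiction for spectral points $\mu_0\lesssim\delta\e^{-2/(k+1)}$, so the sentence ``if either spectrum reaches above $C\delta\e^{-2/(k+1)}$ then so does the other'' does not follow from it; moreover, with $t_0=\delta\e^{2/k}$ your tail regime consists of energies above $\delta\e^{-2/k}>\delta\e^{-2/(k+1)}$, i.e.\ exactly outside the range where your transfer argument applies. Second, and decisively, the implication ``$\sigma(\A\e)$ reaches high energies $\Rightarrow$ $\sigma(\A)$ does'' is false under Definition~\ref{def:deltaclo}: take $\HS=\C$, $\a=0$, $\HS\e=\C^2$, $\a\e[u,v]=Mu_2\bar v_2$ with $M\geq\delta\e^{-2}$, and $J\e f=(f,0)$, $J'\e u=u_1$, $J^1\e=J\e$, $J\e^{1\prime}=J'\e$. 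All conditions \eqref{cond1a}--\eqref{cond5} hold with error $\delta\e$ for every order $k$ (the only nontrivial one, \eqref{cond3b}, holds because its right-hand side carries the weight $\|u\|_{\HS\e^1}\geq\sqrt{1+M}\,|u_2|$), yet $\frac1{1+\sigma(\A)}=\{1\}$ while $\frac1{1+\sigma(\A\e)}=\{1,\tfrac1{1+M}\}$, so the Hausdorff distance is $\geq 1-\delta\e^2$. This shows that spectrum of $\A\e$ above the scale $\delta\e^{-2}$ is invisible to the identification operators, and in particular that the lower bound you hope for on $\|J\e^{1\prime}u\|$ for high-energy $u$ (``matching lower bounds supplied by spectral localization'') cannot exist: \eqref{cond1b} and \eqref{cond3b} allow $J'\e$ and $J\e^{1\prime}$ to annihilate such $u$ outright. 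Consequently the statement has to be read as in the source, with $0$ adjoined to both sets (equivalently, for their closures; in the present application this is immaterial since $\sigma(\A)$ and $\sigma(\A\e)$ are unbounded). With that reading the tail is trivial --- spectral points above a threshold $T$ chosen between the bulk-controllable range and $\delta\e^{-2}$ map into a $(1+T)^{-1}$-neighbourhood of $0$, which now belongs to both sets --- no comparability of $\sup\sigma(\A)$ and $\sup\sigma(\A\e)$ is needed or available, and the entire content of the proof is your bulk step made uniform in the window. So your instinct that the tail is the hard part is inverted: the tail is where the literal statement fails and must be bypassed by convention, not proved.
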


\section{Proof of the main results}
\label{sec:proofs}

For an open subset $M\subset\R^n$ ($M \neq \emptyset$) we denote
by $\langle f\rangle_M$ 
the \emph{mean value} of $f$ over $M$, i.e.\
\begin{equation*}
  \langle f\rangle_M:= \ds\frac 1{|M|} \intl_M f(x) \dd x.
\end{equation*}

Recall that $\HS\e$ and $\HS$ stand for the spaces $\L(\Omega\e)$ and
$\L(\Omega)$, respectively; $\a\e$ and $\a$ are the sesqulilinear
forms associated with the operators $\A\e$ and $\A$.  Also, recall
that $\HS^1\e$ (respectively, $\HS^1$) is a Hilbert space of functions
from $\dom(\a\e)$ (respectively, $\dom(\a)$) equipped with the scalar
product $(u,v)_{\HS^1\e}=\a\e[u,v]+(u,v)_{\HS\e}$ (respectively,
$(u,v)_{\HS^1}=\a[u,v]+(u,v)_{\HS}$).

Our goal is to show that $(\HS,\a)$ and $(\HS\e,\a\e)$ are
$\delta\e$-close of order $k=2$ with respect to the operators $J\e
\colon \HS\to\HS\e$ defined in~\eqref{J}, $J'\e \colon \HS\e\to\HS$
defined in~\eqref{J'} and suitable operators $J^1\e \colon
\HS^1\to\HS^1\e$, $J'\e \colon \HS^1\e\to\HS^1$.  Then
Theorem~\ref{thm:main:1} follows immediately from
Theorem~\ref{thm:abstract:1}, Theorem~\ref{thm:main:2} follows from
Corollary~\ref{abs-cor}, and Theorem~\ref{thm:main:3} follows from
Theorem~\ref{thm-holom}.  The proof of Theorem~\ref{thm:main:4} needs
an additional step. For convenience, we postpone it to the end of this
section.

We define the operator $J\e^\pp$ being equal to $J\e'$ on $\HS^1_\eps$.
Thus the only non-obvious definition is the one of $J^1\e$
as we have to assure that $J^1\e f \restr{\cup_{i\in\I}D_{i\eps}}=0$.

We define
\begin{equation*}
  J^1\e f
 := f - \suml_{i\in\I}P\ie f - \suml_{i\in\I}Q\ie f
\end{equation*}
with
\begin{equation*}
  (P\ie f)(x)
  := \bigl(f(x)-f\ie\bigr) \chi\ie(x)
\text{\qquad and\qquad }  
  (Q\e f)(x)
  := f\ie H\ie(x) \widehat \chi\ie(x)
\end{equation*}
\begin{figure}[h]
  \setlength{\unitlength}{1mm}
  \begin{picture}(60,80)
    \includegraphics{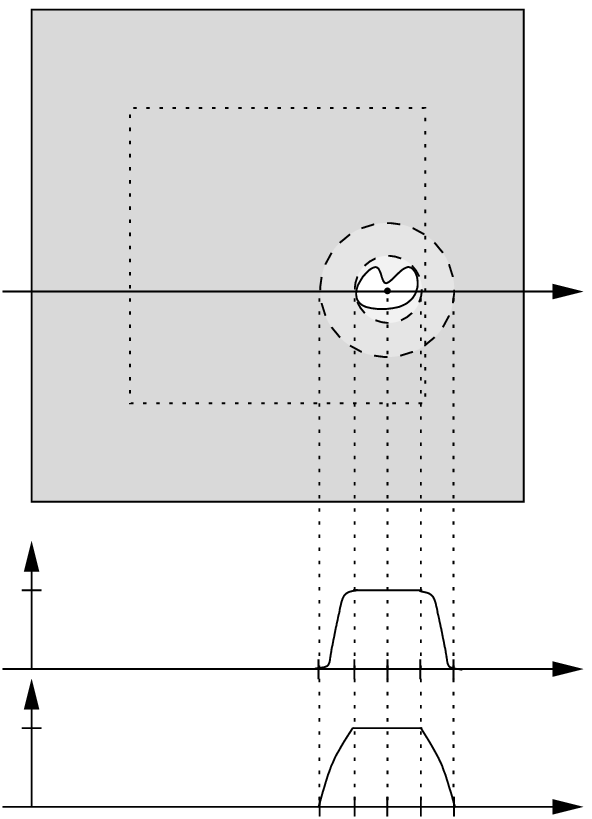}
    \put(-5,32){$\square\ie$}
    \put(-55,34){$Y\ie$}
    \put(-25,62){$D\ie$}
    \put(-22,61){\vector(1,-2){3.5}}
    \put(-33,58){$x\ie$}
    \put(-28.5,58){\vector(2,-1){8}}
    \put(-41,49){$B(D\ie)$}
    \put(-29,50){\vector(2,1){5}}
    \put(1,53){$x$}
    \put(1,14.7){$x$}
    \put(-55,26){$\chi\ie(x)$ ($n\ge3$)}
    \put(-60,22){$1$}
    \put(1,0.5){$x$}
    \put(-55,11.5){$\chi\ie(x)$ ($n=2$)}
    \put(-60,7.5){$1$}
    \put(-28,27){\vector(1,0){4}}
    \put(-16,27){\vector(-1,0){4}}
    \put(-23.5,28){$d_\eps$}
    \put(-23.5,18){\vector(1,0){3.5}}
    \put(-23.5,18){\vector(-1,0){3.5}}
    \put(-25,19.5){$2d_\eps$}
    \put(-23.5,3){\vector(1,0){3.5}}
    \put(-23.5,3){\vector(-1,0){3.5}}
    \put(-23.5,4){$\eps^2$}
  \end{picture}
  \qquad
  \begin{picture}(60,80)
    \includegraphics{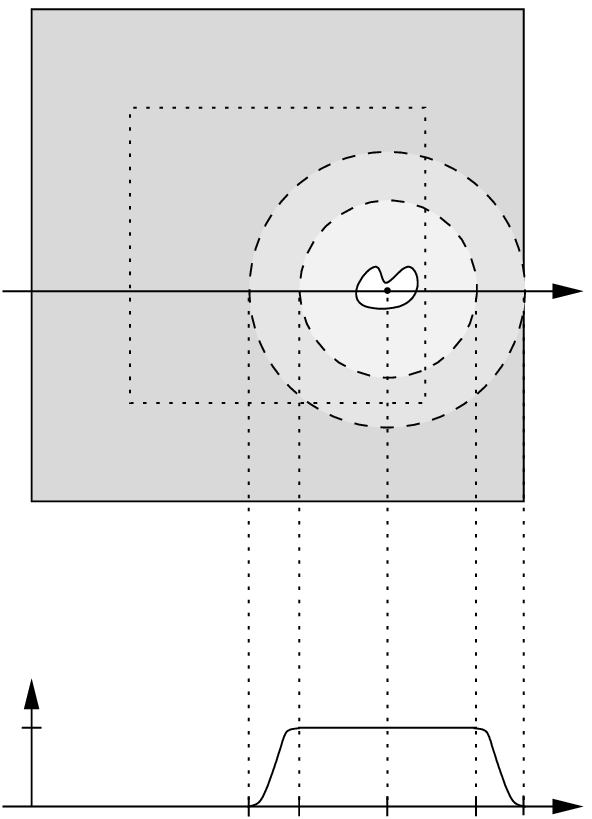}
    \put(-5,32){$\square\ie$}
    \put(-55,34){$Y\ie$}
    \put(-23,59){$D\ie$}
    \put(-19,58){\vector(0,-1){3.5}}
    \put(-40,58.7){$x\ie$}
    \put(-35,58.7){\vector(3,-1){14}}
    \put(1,53){$x$}
    \put(-55,11.5){$\widehat \chi\ie(x)$}
    \put(-60,7.5){$1$}
    \put(1,0.5){$x$}
    \put(-24.5,25){\vector(1,0){4.5}}
    \put(-24.5,25){\vector(-1,0){4.5}}
    \put(-29,27){$\frac{\kappa\eps}2+d_\eps$}
    \put(-24.5,15){\vector(1,0){4.5}}
    \put(-24.5,15){\vector(-1,0){9.5}}
    \put(-28.5,17){$\kappa \eps + d_\eps$}
  \end{picture}
  \caption{The two cut-off functions $\chi\ie$ and $\widehat \chi\ie$
    with decay on the scale $d_\eps$ and $\eps$, respectively.  On the
    left, there is the cut-off function $\chi\ie$, which is $1$ inside
    the small ball $B(D\ie)$ (light gray) with radius $d_\eps$, and
    $0$ outside the larger ball around $x\ie$ with radius $2d_\eps$
    ($n\ge 3$) resp.\ $\eps^2$ ($n=2$).  \newline On the right, there
    is the cut-off function $\widehat \chi\ie$, which is $1$ inside
    the light gray ball of radius $\kappa\eps/2+d_\eps$, and $0$ on
    the dark gray area outside the larger ball of radius $\kappa
    \eps+d_\eps$.  Both cut-off functions have support in
    $\square\ie$.}
  \label{fig3}
\end{figure}
Here (see also Figure~\ref{fig3})
\begin{itemize}
\item 
$f\ie := \langle f\rangle_{\square\ie}$,

\item
  $x\ie$ denotes the center of the smallest ball $B(D\ie)$
  containing the set $D\ie$ (recall that this ball has radius $d\e$),
\item for $n\geq 3$:
  \begin{equation*}
    \chi\ie(x)=\chi\left(\dfrac{|x-x\ie|}{d\e}\right),
  \end{equation*}
  where $\chi\in\CC(\R)$ is a smooth cut-off function such that
  $|\chi(t)|\leq 1$ and \begin{equation*}\chi(t)=1\text{ as }t<1\text{
      and }\chi(t)=0\text{ as }t>2,\end{equation*}

\item for $n=2$:
\begin{equation*}\chi\ie(x)=\
\begin{cases}
1&\text{as }\, |x-x\ie|\leq d\e,\\
\dfrac{\ln|x-x\ie|-2\ln \eps}{\ln d\e - 2\ln \eps}&
   \text{as }\, |x-x\ie|\in (d\e,\eps^2),\\
0&\text{as }\, |x-x\ie|\geq\eps^2,
\end{cases}\end{equation*}

\item $\widehat\chi\ie(x):= \chi\left(\dfrac{(2/\kappa)\cdot (\abs{x-x\ie}-d\e)}{\eps}\right)$,

\item for $n\geq 3$: $H\ie$ is the solution to the problem
  \begin{gather*}
    \begin{cases}
      \Delta H\ie(x)=0,&x\in\R^n\setminus\overline{D\ie},\\
      H\ie(x)=1,&x\in\partial D\ie,\\
      H\ie(x)\to 0,&|x|\to\infty
    \end{cases}
  \end{gather*}

\item for $n=2$:
  $H\ie$ is the solution to the problem
  \begin{gather*}
    \begin{cases}
      \Delta H\ie(x)=0,&x\in B_{1}(x\ie)\setminus\overline{D\ie},\\
      H\ie(x)=1,&x\in\partial D\ie,\\
      H\ie(x)=0,&x\in\partial B_{1}(x\ie),
    \end{cases}\\[1ex]\notag
    B_{1}(x\ie)\text{ is the unit ball centered at }x\ie,
  \end{gather*} 
  extended by $0$ to $\R^n\setminus \overline{B_{1}(x\ie)}$.
\end{itemize} 
Note, that the function $H\ie$ is defined on $\R^n\setminus {D\ie}$
(resp.\ $B_1(x\ie) \setminus D\ie$ if $n=2$).  We extend it onto
${D\ie}$ by $1$ (and onto $\R^n \setminus B_1(x\ie)$ by $0$ if $n=2$),
keeping the same notation $H\ie$.  Note also that $\|\nabla
H\ie\|_{\L(\R^n)}^2 = \capty(D\ie)$ by the definition of capacity
in~\eqref{capacity} and~\eqref{capacity2}.

We set $Y\ie:=\square\ie\setminus\overline{D\ie}$.
It is easy to see that 
\begin{gather*}
  P\ie f \restr {D\ie}
  = f \restr{D\ie} - f\ie,\quad 
  Q\ie f \restr {D\ie} = f\ie,
  \quad
\supp (P\ie f)\subset \square\ie,\quad \supp (Q\ie f)\subset \square \ie
\end{gather*}
(the inclusions are valid for $d\e\leq \kappa\eps$, which holds true
for small enough $\eps$ in view of \eqref{d}). Consequently $J\e^1
f\in \H_0(\Omega\e)$.

Now, we are in position to start the proof of~\eqref{cond1a}--\eqref{cond5}.

At first, we note that
conditions~\eqref{cond1b},~\eqref{cond2},~\eqref{cond3b} hold with
$\delta\e=0$, following from the definitions of the operators $J\e$,
$J'\e$ and $J^{1\prime}\e$.  Also, obviously, for each $f\in
\L(\Omega)$ and $u\in \L(\Omega\e)$ we have
\begin{equation*}
  \|J\e f\|_{\L(\Omega\e)}\leq \|f\|_{\L(\Omega)},\quad 
  \|J\e' u\|_{\L(\Omega)}= \|u\|_{\L(\Omega\e)},
\end{equation*} 
and therefore conditions~\eqref{cond4a}--\eqref{cond4b} are valid as well with $\delta_\eps=0$.
Thus, it remains to check the non-trivial conditions~\eqref{cond1a},~\eqref{cond3a} and~\eqref{cond5}.

The following Friedrichs- and Poincare-type inequalities will be frequently used  further.

\begin{lemma} 
One has
\begin{gather}\label{Friedrichs}
\forall v\in \H_0  (\square\ie):\quad 
\|v\|^2_{\L(\square\ie)}\leq C\eps^2 \|\nabla v\|^2_{\L(\square\ie)},\\\label{Poincare}
\forall v\in \H  (\square\ie):\quad 
\|v-\langle v\rangle_{\square\ie} \|^2_{\L(\square\ie)}\leq C\eps^2\|\nabla v\|^2_{\L(\square\ie)}.
\end{gather}

\end{lemma}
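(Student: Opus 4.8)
The statement to prove is the pair of elementary inequalities \eqref{Friedrichs} and \eqref{Poincare} on the rescaled cell $\square\ie = \eps(\square+i)$. Both are classical; the only point worth spelling out is the precise scaling in $\eps$, and the fact that the constant $C$ depends only on the dimension $n$ (not on $i$, not on $\eps$). The plan is to reduce everything to the fixed unit cube $\square=(-1/2,1/2)^n$ by an affine change of variables, quote (or prove in two lines) the inequalities on $\square$, and then track how the norms rescale.

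First I would establish the two inequalities on the \emph{fixed} unit cube. For the Friedrichs inequality: every $w\in\H_0(\square)$ extends by zero to a function in $\H^1(\R^n)$ supported in $\square$, so $w$ vanishes on a set of positive measure (indeed on all of $\R^n\setminus\square$), and the standard Poincaré–Friedrichs inequality on a bounded Lipschitz domain gives $\|w\|_{\L(\square)}^2\le c_n\|\nabla w\|_{\L(\square)}^2$; alternatively one can give the one-line proof via the fundamental theorem of calculus in the first coordinate direction, $w(x)=\int_{-1/2}^{x_1}\partial_1 w(t,x')\,dt$, followed by Cauchy–Schwarz and integration. For the Poincaré inequality on $\square$: for $w\in\H^1(\square)$ one has $\|w-\langle w\rangle_\square\|_{\L(\square)}^2\le c_n\|\nabla w\|_{\L(\square)}^2$, which is the classical Poincaré–Wirtinger inequality on a convex domain (constant $\mathrm{diam}(\square)/\pi$, but any finite $c_n$ suffices here). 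Both constants depend only on $n$.

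Next I would perform the scaling. Given $v$ defined on $\square\ie$, set $w(y):=v\big(\eps(y+i)\big)$ for $y\in\square$; then $w\in\H_0(\square)$ whenever $v\in\H_0(\square\ie)$, and $w\in\H^1(\square)$ whenever $v\in\H^1(\square\ie)$, with $\langle w\rangle_\square=\langle v\rangle_{\square\ie}$. The change of variables gives $\|v\|_{\L(\square\ie)}^2=\eps^n\|w\|_{\L(\square)}^2$ and $\|\nabla v\|_{\L(\square\ie)}^2=\eps^{n-2}\|\nabla w\|_{\L(\square)}^2$ (and likewise $\|v-\langle v\rangle_{\square\ie}\|_{\L(\square\ie)}^2=\eps^n\|w-\langle w\rangle_\square\|_{\L(\square)}^2$). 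Inserting the unit-cube inequalities and cancelling the common factor $\eps^n$ yields \eqref{Friedrichs} and \eqref{Poincare} with $C=c_n$, i.e.\ a constant depending only on $n$, as claimed.

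There is no real obstacle here — the "hard part", if any, is purely bookkeeping: making sure the Jacobian factors $\eps^n$ and $\eps^{n-2}$ are matched correctly so that exactly one factor of $\eps^2$ survives on the right-hand side, and noting that $\H_0(\square\ie)$ is genuinely translation- and dilation-covariant (clear from the definition of the space as the $\H^1$-closure of $\CC_0(\square\ie)$). Since this is a standard lemma used only as a tool, I would keep the write-up to the scaling argument above and not belabor the unit-cube estimates.
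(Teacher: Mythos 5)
Your proof is correct. It takes a genuinely different (though closely related) route from the paper. You first establish the Friedrichs and Poincar\'e--Wirtinger inequalities on the fixed unit cube $\square$ (quoting them as classical, or sketching elementary proofs), then transfer them to $\square\ie$ by the affine change of variables $y\mapsto\eps(y+i)$ and track the Jacobian factors $\eps^n$ and $\eps^{n-2}$. The paper instead works directly on $\square\ie$ and invokes the variational (min-max) characterization: \eqref{Friedrichs} is the statement that the Rayleigh quotient for the Dirichlet Laplacian on $\square\ie$ is bounded below by its first eigenvalue $\lambda_\eps^\Dir=n(\pi/\eps)^2$, and \eqref{Poincare} is the analogous bound by the second Neumann eigenvalue $\lambda_\eps^\Neu=(\pi/\eps)^2$, both computed explicitly by separation of variables on the cube. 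The two arguments buy slightly different things: the paper's approach yields the sharp constants (the exact reciprocal eigenvalues), whereas your scaling argument is cleaner to state and makes the $\eps$-dependence transparent without needing to know the eigenvalues of the cube. Since the lemma is only used with a generic dimension-dependent constant $C$, either approach is perfectly adequate; they are also secretly the same, as the optimal constant in the unit-cube Poincar\'e inequality is the reciprocal of the corresponding eigenvalue on $\square$, and the scaling you perform is exactly how that eigenvalue rescales.
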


\begin{proof}
By the min-max principle 
\begin{gather*}
  \Lambda\e^\Dir
  =\min \left\{\frac{\|\nabla u\|^2_{\L(\square\ie)}}{\|u\|^2_{\L(\square\ie)}}:\ 
    u\in\H_0(\square\ie)\setminus\{0\}\right\},\\
  \Lambda\e^\Neu
  =\min\left\{\frac{\|\nabla u\|^2_{\L(\square\ie)}}{\|u\|^2_{\L(\square\ie)}}:\ 
    u\in\H(\square\ie)\setminus\{0\},\ \intl_{\square\ie}u(x)\dd x=0\right\},
\end{gather*}
where $\Lambda\e^\Dir$ (respectively, $\Lambda\e^\Neu$) is the first (respectively, the second)
eigenvalue of the Dirichlet (respectively, the Neumann) Laplacian on $\square\ie$.
Straightforward calculations gives 
\begin{gather*}
  \lambda\e^\Dir=n\left(\frac \pi \eps \right)^2,\qquad
  \lambda\e^\Neu= \left(\frac \pi \eps \right)^2,
\end{gather*} 
hence we easily get the required
inequalities~\eqref{Friedrichs}--\eqref{Poincare}.
\end{proof}

\subsection{Proof of~\eqref{cond1a}} Let  $f\in \L(\Omega)$. We have
\begin{gather}\label{1.1}
\|J\e f - J\e^1 f\|^2_{\HS\e}\leq
2\suml_{i\in\I}\left(\|P\ie f\|^2_{\L(\square\ie)} + \|Q\ie f\|^2_{\L(\square\ie)}\right).
\end{gather}

Using~\eqref{Poincare} and taking into account that $|\chi\ie(x)|\leq 1$, we obtain:
\begin{gather}\label{P-est}
\suml_{i\in\I}\|P\ie f\|^2_{\L(\square\ie)}\leq  \suml_{i\in\I}
\|f-f\ie\|^2_{\L(\square\ie)}\leq C\eps^2 \suml_{i\in\I}
\|\nabla f\|^2_{\L(\square\ie)}\leq C\eps^2\|f\|^2_{\HS^1}.
\end{gather}

Using~\eqref{Friedrichs} and taking into account that   
$|\widehat\chi\ie|\leq 1$, $|\widehat\chi\ie'|\leq C\kappa^{-1}\eps^{-1}$ and $\widehat\chi\ie\restr{D\ie}=H\ie\restr{D\ie}=1$ we obtain: 
\begin{multline}\label{1.2}
\suml_{i\in\I}\|Q\ie f\|^2_{\L(\square\ie)}\leq
 \suml_{i\in\I}|f\ie|^2 \|\widehat\chi\ie H\ie\|^2_{\L(\square\ie)}\leq
C\eps^2\suml_{i\in\I}
|f\ie|^2 \|\nabla(\widehat\chi\ie H\ie)\|^2_{\L(Y\ie)}\\
\leq
C_1\eps^2\suml_{i\in\I}|f\ie|^2\left(  \|\nabla H\ie\|^2_{\L(Y\ie)}+
 \kappa^{-2}\eps^{-2} \|H\ie\|_{\L(\supp(\nabla\widehat\chi\ie))}^2\right) .
\end{multline}
From~\eqref{q} and the definition of $H\ie$ we obtain the estimate  
\begin{gather}
  \label{1.3}
  \|\nabla H\ie\|^2_{\L(Y\ie)}
  \leq \|\nabla H\ie\|^2_{\L(\R^n)}
  = \capty(D\ie)
  \leq   C\eps^n.
\end{gather}
Using Lemma~\ref{lemma-Hest}, we obtain
\begin{equation*}
  |H\ie(x)|\leq C\tau_n(d\e)/ \tau_n(\eps),
  \quad\text{where}\quad
  \tau_n(r):=
  \begin{cases}
    r^{n-2},      &n\geq 3,\\
    |\ln r|^{-1}, &n=2,
  \end{cases}
\end{equation*} 
for $x\in \supp(\nabla\widehat\chi\ie)\subset\left\{x\in\R^n:\
  d\e+\kappa\eps/2 \leq |x-x\ie| \leq d\e+\kappa\eps\right\}$.  Hence,
taking into account that $\tau_n(d\e)\leq C\eps^n$ (see~\eqref{d}), we
deduce the asymptotics
\begin{gather}\label{1.4}
\|H\ie\|^2_{\L(\supp(\nabla\chi\ie))}=o(\eps^{n+2}).
\end{gather} 
Finally, applying the Cauchy-Schwarz inequality, one gets
\begin{gather}\label{1.5}
|f\ie|^2\leq \eps^{-n}\|f\|^2_{\L(\square\ie)}.
\end{gather} 
Combining~\eqref{1.2}--\eqref{1.5} we arrive at 
\begin{gather}\label{Q-est}
\suml_{i\in\I}\|Q\ie f\|^2_{\L(\square\ie)}\leq C_\kappa\eps^2\|f\|^2_{\HS}.
\end{gather}
Here and in what follows by $C_\kappa$ we denote a generic constant depending   on $\kappa$ and $n$.

From~\eqref{1.1},~\eqref{P-est},~\eqref{Q-est} we obtain
\begin{gather*}
\|J\e f - J\e^1 f\|_{\HS\e}\leq C_\kappa\eps\|f\|_{\HS^1}\leq \delta\e\|f\|_{\HS^1},
\end{gather*}
where $\delta\e$ is defined in~\eqref{delta}.  Therefore, we have
checked Condition~\eqref{cond1a}.
  
\subsection{Proof of~\eqref{cond3a}}

We need the following lemma, which was proven in~\cite[Lem.~4.9 and
Rem.~4.2)]{MK06}.

\begin{lemma} 
  \label{lemma2}
  Let $D\subset\R^n$ be a bounded convex domain, and let
  $D_1,D_2\subset D$ be measurable subsets with $|D_2|\not= 0$.  Then
  \begin{equation*}
    \|v\|^2_{\L(D_1)}
    \leq \frac {2|D_1|}{|D_2|}\|v\|^2_{\L(D_2)} +
    C \frac {(\diam(D))^{n+1}|D_1|^{1/n}}{|D_2|}\|\nabla v\|^2_{\L(D)}
  \end{equation*}
  for all $v\in \H(D)$, where $C$ depends only on the dimension $n$.
\end{lemma}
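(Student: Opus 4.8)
My plan is to peel off the $D_2$-average of $v$, dispose of the constant part by Cauchy--Schwarz, and reduce the remaining term to a single triple integral of $|\nabla v|^2$ along the segments joining points of $D_1$ to points of $D_2$ --- convexity of $D$ entering precisely to keep those segments inside $D$. Concretely, writing $v=\langle v\rangle_{D_2}+(v-\langle v\rangle_{D_2})$ on $D_1$ gives
\begin{gather*}
\|v\|^2_{\L(D_1)}\leq 2|D_1|\,|\langle v\rangle_{D_2}|^2+2\,\|v-\langle v\rangle_{D_2}\|^2_{\L(D_1)},
\end{gather*}
and $|\langle v\rangle_{D_2}|^2\leq |D_2|^{-1}\|v\|^2_{\L(D_2)}$ (Cauchy--Schwarz) produces exactly the first term $\tfrac{2|D_1|}{|D_2|}\|v\|^2_{\L(D_2)}$ of the asserted bound. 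Since both sides depend continuously on $v$ in the $\H(D)$-norm, it suffices to prove the estimate for $v\in\CC(\clo D)$, which is dense in $\H(D)$ because a bounded convex domain is Lipschitz. For such $v$, and $x\in D_1$, $y\in D_2$, the segment $[x,y]$ lies in $\clo D$, so $v(x)-v(y)=-\intl_0^1\nabla v((1-t)x+ty)\cdot(y-x)\dd t$; averaging over $y\in D_2$, using $|y-x|\leq\diam D$ and Cauchy--Schwarz on $D_2\times(0,1)$, and integrating over $x\in D_1$ yields
\begin{gather*}
\|v-\langle v\rangle_{D_2}\|^2_{\L(D_1)}\leq\frac{(\diam D)^2}{|D_2|}\,I,\qquad I:=\intl_{D_1}\intl_{D_2}\intl_0^1|\nabla v((1-t)x+ty)|^2\dd t\,\dd y\,\dd x.
\end{gather*}

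The heart of the matter is the bound $I\leq C\,(\diam D)^{n-1}|D_1|^{1/n}\,\|\nabla v\|^2_{\L(D)}$, which I would get by splitting $I$ according to $t\leq\tfrac12$ and $t\geq\tfrac12$. For $t\geq\tfrac12$ the point $z=(1-t)x+ty$ is ``near $y$'': fixing $x,t$ and substituting $y\mapsto z$ ($\dd y=t^{-n}\dd z$ with $t^{-n}\le2^n$, and $z\in D$ by convexity) gives $\intl_{D_2}|\nabla v((1-t)x+ty)|^2\dd y\leq 2^n\|\nabla v\|^2_{\L(D)}$, so this part of $I$ is $\leq 2^{n-1}|D_1|\,\|\nabla v\|^2_{\L(D)}$. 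For $t\leq\tfrac12$ the point $z$ is ``near $x$'': the same substitution maps $D_2$ onto $(1-t)x+tD_2\subset B(x,t\diam D)\cap D$, so
\begin{gather*}
\intl_{D_2}|\nabla v((1-t)x+ty)|^2\dd y\leq t^{-n}\intl_{B(x,t\diam D)\cap D}|\nabla v(z)|^2\dd z;
\end{gather*}
integrating over $x\in D_1$ and using Fubini, the resulting $x$-measure is at most $\min\{|D_1|,\,\omega_n(t\diam D)^n\}$ (with $\omega_n:=|\{x\in\R^n:|x|<1\}|$), so this part of $I$ is $\leq\|\nabla v\|^2_{\L(D)}\intl_0^{1/2}\min\{t^{-n}|D_1|,\,\omega_n(\diam D)^n\}\dd t$. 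Splitting this last integral at the crossover value $t_*=(\diam D)^{-1}(|D_1|/\omega_n)^{1/n}$ (and handling separately the trivial case $t_*>\tfrac12$), and using $n\geq 2$ for convergence of $\intl t^{-n}\dd t$, yields the bound $C_n(\diam D)^{n-1}|D_1|^{1/n}\|\nabla v\|^2_{\L(D)}$. Finally, $D_1\subset D$ is contained in a ball of radius $\diam D$, so $|D_1|\leq\omega_n(\diam D)^n$ and hence $|D_1|\leq\omega_n^{(n-1)/n}(\diam D)^{n-1}|D_1|^{1/n}$; thus the $t\geq\tfrac12$ contribution is dominated by the same quantity. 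Adding up proves the bound on $I$, and substituting into the inequality for $\|v-\langle v\rangle_{D_2}\|^2_{\L(D_1)}$ and combining with the first step completes the proof, with $C$ depending only on $n$.

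The one genuinely delicate point is obtaining the exponent $|D_1|^{1/n}$ rather than the cruder $|D_1|$: this is forced by retaining, in the small-$t$ regime, the fact that $(1-t)x+ty$ lies in the shrinking ball $B(x,t\diam D)$, which produces the truncated integrand $\min\{t^{-n}|D_1|,\omega_n(\diam D)^n\}$ whose $t$-integral is finite precisely because $n\geq2$ and which spits out $|D_1|^{1/n}$. Everything else --- the density reduction to smooth $v$, the use of convexity to keep segments inside $D$, and the bookkeeping of the case $t_*>\tfrac12$ --- is routine.
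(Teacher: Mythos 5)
The paper does not actually supply a proof of this lemma: it is quoted verbatim from Marchenko--Khruslov \cite[Lem.~4.9, Rem.~4.2]{MK06}, so there is no in-paper argument to compare against. Your proof, however, is correct and self-contained. Splitting off $\langle v\rangle_{D_2}$ by Cauchy--Schwarz gives exactly the first term; the reduction to smooth $v$ is licit because a bounded convex open set is Lipschitz and both sides of the inequality are continuous in the $\H(D)$-norm; and the segment representation $v(x)-v(y)=-\int_0^1\nabla v((1-t)x+ty)\cdot(y-x)\dd t$ with Cauchy--Schwarz on $D_2\times(0,1)$ produces the triple integral $I$ with the correct prefactor $(\diam D)^2/|D_2|$. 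The core estimate $I\le C_n(\diam D)^{n-1}|D_1|^{1/n}\|\nabla v\|^2_{\L(D)}$ is where the content lies, and your handling of it is exactly right: the change of variables $z=(1-t)x+ty$ for $t\ge\tfrac12$ lands in $D$ by convexity with Jacobian bounded by $2^n$, giving a contribution $\lesssim|D_1|\|\nabla v\|^2\lesssim(\diam D)^{n-1}|D_1|^{1/n}\|\nabla v\|^2$ since $|D_1|\le\omega_n(\diam D)^n$; and for $t\le\tfrac12$, keeping track of the constraint $z\in B(x,t\diam D)$, applying Fubini, and truncating the resulting $x$-measure by $\min\{|D_1|,\omega_n(t\diam D)^n\}$ yields $\int_0^{1/2}\min\{t^{-n}|D_1|,\omega_n(\diam D)^n\}\dd t\le C_n(\diam D)^{n-1}|D_1|^{1/n}$, the split at $t_*=(\diam D)^{-1}(|D_1|/\omega_n)^{1/n}$ (and the degenerate case $t_*>\tfrac12$) being bookkeeping. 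This is precisely the mechanism that trades a factor of $|D_1|$ for $|D_1|^{1/n}(\diam D)^{n-1}$, and it is the argument used in \cite{MK06}; you have reconstructed it faithfully.
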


Let $f\in \L(\Omega)$. Applying Lemma~\ref{lemma2} with $v:=f$ and
$D:=\square\ie$, $D_1:=D\ie$, $D_2:=\square\ie$ we obtain
\begin{multline*}
\|f-J'\e J\e f\|^2_{\HS}=\suml_{i\in\I}\|f\|^2_{\L(D\ie)}
\leq 
C\suml_{i\in\I}\left(\left(\frac {d\e} \eps \right)^n\|f\|^2_{\L(\square\ie)}+ 
\frac{\eps^{n+1}d\e}{\eps^n}\|\nabla f\|^2_{\L(\square\ie)}\right)\\
\leq C\left(\left(\frac{d\e}\eps\right)^n+\eps d\e\right)\|f\|^2_{\HS^1}.
\end{multline*}
It is straightforward to show, using~\eqref{d} and the definition of
$\delta\e$ in~\eqref{delta}, that
\begin{equation*}
  \left(\frac{d\e}\eps\right)^n+\eps d\e
  \leq C\eps^2\leq C(\delta\e)^2
\end{equation*} 
and thus condition~\eqref{cond3a} is also valid.

\subsection{Proof of the form estimate~\eqref{cond5}} 

We will show that~\eqref{cond5} holds with $k=2$.  

Recall that $\HS^2$ is a Hilbert space consisting of functions from
$\dom(\A)=\mathsf{H}^2(\Omega)\cap \H_0(\Omega)$ with scalar product
$(f,g)_{\HS^2}:=(\A f+f ,\A g+g)_{\HS}$.

Since $\partial\Omega$ is $\mathsf{C}^2$-smooth, we can apply standard
elliptic regularity theory (see, e.g.,~\cite{GT01}): namely, the
$\HS^2$-norm is equivalent to the $\mathsf{H}^2(\Omega)$-norm, i.e.\
there is $C_\Omega>0$ such that for each $f\in\dom(\A)$ we have
\begin{gather}\label{h2h2}
\|f\|_{\mathsf{H}^2(\Omega)}\leq C_\Omega \|f\|_{\HS^2}
\end{gather}
(the fulfilment of \eqref{h2h2} for noncompact $\partial\Omega$ is due to the uniform regularity of $\Omega$). 

Note, that this is the only estimate in our proof in which the constant depends
on the domain $\Omega$. This results to $\Omega$-dependence of the constant standing in the definition of $\delta\e$.

Let $f\in \dom(\A)$, $u\in \H_0(\Omega\e)$. One has:
\begin{multline*} 
\left|\a\e[u,J\e^1 f]-\a[J\e^\pp u,f]\right|\\\leq
\left|\suml_{i\in\I}(\nabla u,\nabla P\ie f)_{\L(Y\ie)}\right|+ 
\left|\suml_{i\in\I}(\nabla u,\nabla Q\ie f)_{\L(Y\ie)}+q( u,f)_{\L(\Omega\e)}\right|=:
\mathcal{J}_{\eps,1}+\mathcal{J}_{\eps,2}.
\end{multline*}

\subsubsection{Estimates for $\mathcal{J}_{\eps,1}$}
 
One has, taking into account that $|\chi\ie(x)|\leq 1$:
\begin{multline} \label{J1.1}
\mathcal{J}_{\eps,1}\leq \Bigl|\suml_{i\in\I}(\nabla u, \chi\ie\nabla f)_{\L(\supp(\chi\ie))}\Bigr|+
\Bigl|\suml_{i\in\I}(\nabla u,(f-f\ie)\nabla \chi\ie)_{\L(\supp(\chi\ie))}\Bigr|\\
\leq \|\nabla u\|_{\L(\Omega\e)}\left(\Bigl(\suml_{i\in \I}\|\nabla f\|_{\L(\supp(\chi\ie))}^2\Bigr)^\frac12+ \Bigl(\suml_{i\in \I}\|(f-f\ie)\nabla \chi\ie\|_{\L(Y\ie)}^2\Bigr)^\frac12\right)
\end{multline}

Applying Lemma~\ref{lemma2} with $v:=\partial_j f$,
$D:=\square\ie$, $D_1:=\supp(\chi\ie)$ and $D_2:=\square\ie$, and taking
into account~\eqref{d} and~\eqref{h2h2} we obtain the estimates:
\begin{gather}\label{J1-est1}
\begin{array}{ll}
  n\geq 2:&
  \ds\suml_{i\in \I}\|\nabla f\|^2_{\L(\supp(\chi\ie))}
  \leq C\left(\left(\frac{d\e}\eps\right)^n
    +\eps \cdot d\e\right)\|f\|^2_{\mathsf{H}^2(\Omega)}
  \leq C\eps^2\|f\|^2_{\HS^2}, \\[1ex]
  n= 2:&\ds\suml_{i\in \I}\|\nabla f\|^2_{\L(\supp(\chi\ie))}
  \leq C\left(\left(\frac{\eps^2}\eps\right)^2
    +\eps\cdot \eps^2\right)\|f\|^2_{\mathsf{H}^2(\Omega)}
  \leq C\eps^2\|f\|^2_{\HS^2}.
\end{array}
\end{gather}
  
Now, we estimate the second term in~\eqref{J1.1}. 
One has the following H\"older-type inequality:
\begin{gather}\label{Hoelder}
  \forall p,q\in[2,\infty],\ 
  \frac 1p + \frac 1q
  =\frac12:\,\,\|(f-f\ie)\nabla \chi\ie\|_{\L(Y\ie)}\leq 
\|f-f\ie\|_{\mathsf{L}_p(Y\ie)} \|\nabla \chi\ie\|_{\mathsf{L}_q(Y\ie)}
\end{gather}
(for $p=\infty$ we use a convention $p^{-1}=0$).
Indeed, the classical H\"older inequality states that 
$\|FG\|_{\mathsf{L}_1(Y\ie)}\leq \|F\|_{\mathsf{L}_\mathbf{p}(Y\ie)}\|G\|_{\mathsf{L}_\mathbf{q}(Y\ie)}$ provided $\mathbf{p},\mathbf{q}\in[1,\infty]$, $\mathbf{p}^{-1}+\mathbf{q}^{-1}=1$.
Setting $\mathbf{p}:=p/2$, $\mathbf{q}:=q/2$, $F:=|f-f\ie|^2$, $G:=|\nabla \chi\ie|^2$ we easily arrive at~\eqref{Hoelder}.

One needs the following re-scaled Sobolev inequality.

\begin{lemma}\label{lemma-sobolev}
For each $f\in \mathsf{H}^2(\square\ie)$ 
\begin{gather}\label{sobolev}
\|f-f\ie\|_{\mathsf{L}_p(\square\ie)}\leq C_p\eps^{n/p+(2-n)/2}\|f\|_{\mathsf{H}^2(\square\ie)}
\end{gather}
provided $p$ satisfies 
\begin{gather}
  \label{p}
  2\leq p\leq \frac{2n}{n-4}\text{\; as\; }n\geq 5,\quad
2\leq p<\infty\text{\; as\; }n=4,\quad
p= \infty\text{\; as\; }n=2,3.
\end{gather}
The constant $C_p$ depends only on $p$.
\end{lemma}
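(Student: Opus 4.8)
The plan is to reduce the inequality to a fixed reference cube by rescaling, where it becomes a standard Sobolev embedding. Write $\square=(-1/2,1/2)^n$, so that $\square\ie=\eps(\square+i)$, and for $f\in\mathsf{H}^2(\square\ie)$ put $g(y):=f(\eps(y+i))$, which lies in $\mathsf{H}^2(\square)$. A change of variables together with the chain rule for first and second derivatives gives $\langle g\rangle_\square=f\ie$ and
\begin{gather*}
\|f-f\ie\|_{\mathsf{L}_p(\square\ie)}=\eps^{n/p}\|g-\langle g\rangle_\square\|_{\mathsf{L}_p(\square)},\qquad
\|\nabla f\|_{\L(\square\ie)}=\eps^{n/2-1}\|\nabla g\|_{\L(\square)},\qquad
\|\nabla^2 f\|_{\L(\square\ie)}=\eps^{n/2-2}\|\nabla^2 g\|_{\L(\square)}.
\end{gather*}
Thus the lemma will follow once we prove the $\eps$-independent estimate on the unit cube
\begin{gather}\label{sob-ref}
\|g-\langle g\rangle_\square\|_{\mathsf{L}_p(\square)}\leq C_p\bigl(\|\nabla g\|_{\L(\square)}+\|\nabla^2 g\|_{\L(\square)}\bigr),\qquad g\in\mathsf{H}^2(\square),
\end{gather}
for the exponents $p$ listed in~\eqref{p}.

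To obtain~\eqref{sob-ref} I would combine two classical facts about the bounded Lipschitz domain $\square$. First, the Sobolev embedding theorem, which gives $\mathsf{H}^2(\square)\hookrightarrow\mathsf{L}_p(\square)$ continuously precisely for the range in~\eqref{p} --- with $p=2n/(n-4)$ admissible for $n\geq5$, every finite $p$ for $n=4$, and $p=\infty$ for $n\leq3$ (since then $\mathsf{H}^2(\square)\hookrightarrow\mathsf{C}(\clo{\square})$) --- so that $\|h\|_{\mathsf{L}_p(\square)}\leq C_p\|h\|_{\mathsf{H}^2(\square)}$. Second, the Poincar\'e--Wirtinger inequality $\|h-\langle h\rangle_\square\|_{\L(\square)}\leq C\|\nabla h\|_{\L(\square)}$ on $\square$. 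Applying the embedding to $h:=g-\langle g\rangle_\square$, which has the same gradient and Hessian as $g$, and then bounding $\|h\|_{\mathsf{H}^2(\square)}^2=\|h\|_{\L(\square)}^2+\|\nabla g\|_{\L(\square)}^2+\|\nabla^2 g\|_{\L(\square)}^2$ by means of the Poincar\'e--Wirtinger inequality for the first term, yields~\eqref{sob-ref}.

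Finally, inserting the scaling relations into~\eqref{sob-ref} and using $\eps\leq1$ (so that $\eps^{2-n/2}\leq\eps^{1-n/2}$) we obtain
\begin{gather*}
\|f-f\ie\|_{\mathsf{L}_p(\square\ie)}=\eps^{n/p}\|g-\langle g\rangle_\square\|_{\mathsf{L}_p(\square)}\leq C_p\eps^{n/p}\bigl(\eps^{1-n/2}\|\nabla f\|_{\L(\square\ie)}+\eps^{2-n/2}\|\nabla^2 f\|_{\L(\square\ie)}\bigr)\leq C_p\eps^{n/p+(2-n)/2}\|f\|_{\mathsf{H}^2(\square\ie)},
\end{gather*}
which is the claimed estimate~\eqref{sobolev}. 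The argument is essentially routine; the one step that must not be overlooked is that the right-hand side of~\eqref{sob-ref} involves only the first- and second-order derivative seminorms and not the full $\mathsf{H}^2(\square)$-norm. This extra factor $\eps^{1-n/2}=\eps^{(2-n)/2}$, gained by subtracting the mean and applying the Poincar\'e--Wirtinger inequality, is exactly what produces the exponent $n/p+(2-n)/2$ in~\eqref{sobolev}; one should also keep in mind that the embedding constant must depend on $p$ when $n=4$, while the endpoint $p=2n/(n-4)$ is admissible when $n\geq5$.
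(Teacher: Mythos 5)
Your proof is correct and follows essentially the same route as the paper: rescale to the reference cube $\square$, invoke the Sobolev embedding $\mathsf{H}^2(\square)\hookrightarrow\mathsf{L}_p(\square)$, and use a Poincar\'e--Wirtinger inequality to replace the $\L$-term by the gradient term so that only first- and second-order seminorms remain. The paper simply performs these steps in a slightly different order (applying the rescaled embedding first on $\square\ie$ and then using the $\eps$-scaled Poincar\'e inequality~\eqref{Poincare} there), but the two arguments are mathematically identical.
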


\begin{proof}

Recall that $\square=(-1/2,1/2)^n$. Since $\mathsf{H}^2(\square)\hookrightarrow\mathsf{L}_p(\square)$ provided~\eqref{p} holds (see, e.g.,~\cite[Theorem~5.4]{A75})
one has for each $g\in \mathsf{H}^2(\square)$:
\begin{gather}\label{sob-square}
\|g\|_{\mathsf{L}_p(\square)}\leq C_p \|g\|_{\mathsf{H}^2(\square)}.
\end{gather}
Now, making the change of variables 
$\square\ni x= y\eps^{-1}-i\text{ with }y\in\square\ie$ in~\eqref{sob-square}, we infer from~\eqref{sob-square}:
for each $g\in\mathsf{H}^2(\square\ie)$
\begin{gather}\label{sob-square-ie}
  \eps^{-n/p}\|g\|_{\mathsf{L}_p(\square)}
  \leq C_p \Bigl(\eps^{-n}\|g\|^2_{\L(\square\ie)}
    +\eps^{2-n}\|\nabla g\|^2_{\L(\square\ie)}
    +\eps^{4-n}\suml_{k,l=1}^n\|\partial_{kl}^2g\|^2_{\L(\square\ie)}
  \Bigr)^{1/2}.
\end{gather}
Finally, we set $g:=f-f\ie$. Then, due to~\eqref{Poincare}, the estimate~\eqref{sob-square-ie} becomes
\begin{align*} 
 \|f-f\ie\|_{\mathsf{L}_p(\square)}
 &\leq C_p \eps^{n/p} \Bigl(\eps^{2-n}\|\nabla f\|^2_{\L(\square\ie)}
   +\eps^{4-n}\suml_{k,l=1}^n\|\partial_{kl}^2f\|^2_{\L(\square\ie)}
 \Bigr)^{1/2}\\
 &\leq C_p \eps^{n/p+(2-n)/2}\|f\|_{\mathsf{H}^2(\square\ie)}.
 \qedhere
\end{align*}
\end{proof}

We also need the estimate for $\chi\ie$, which is proved via a straightforward calculations. 
\begin{lemma}
  One has
  \begin{gather}
    \label{chi-est}
    \begin{array}{ll}
      \|\nabla\chi\ie\|_{\mathsf{L}_q(Y\ie)}\leq C (d\e)^{n/q-1},&n \ge 3,\ q\in [2,\infty],\\[1ex]
      \|\nabla\chi\ie\|_{\mathsf{L}_2(Y\ie)}\leq C |\ln d\e|^{-1/2},& n=2.
    \end{array} 
  \end{gather}
\end{lemma}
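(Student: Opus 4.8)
The plan is to prove the two estimates in~\eqref{chi-est} by a direct computation, exploiting the explicit form of the cut-off functions $\chi\ie$ in the two regimes $n\geq 3$ and $n=2$. The key point is that both cut-off functions are radial around $x\ie$ and supported in a small annulus whose size is controlled by $d\e$ (resp. $\eps^2$ in the planar case); everything reduces to a one-dimensional integral in the radial variable weighted by $r^{n-1}$.

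For $n\geq 3$, recall $\chi\ie(x)=\chi(|x-x\ie|/d\e)$ with $\chi$ a fixed smooth function equal to $1$ for $t<1$ and $0$ for $t>2$, so $\nabla\chi\ie(x)=(d\e)^{-1}\chi'(|x-x\ie|/d\e)\,\tfrac{x-x\ie}{|x-x\ie|}$. Hence $|\nabla\chi\ie(x)|\leq C(d\e)^{-1}$ and $\nabla\chi\ie$ is supported in the annulus $\{d\e\leq|x-x\ie|\leq 2d\e\}\subset Y\ie$ (the inclusion holds for $\eps$ small by~\eqref{d} and~\eqref{eq:dist.obst}). First I would pass to polar coordinates to obtain, for $q<\infty$,
\begin{equation*}
\|\nabla\chi\ie\|_{\mathsf L_q(Y\ie)}^q
\leq C(d\e)^{-q}\intl_{d\e}^{2d\e} r^{n-1}\dd r
\leq C(d\e)^{-q}(d\e)^n,
\end{equation*}
so $\|\nabla\chi\ie\|_{\mathsf L_q(Y\ie)}\leq C(d\e)^{n/q-1}$, which is exactly the claimed bound; for $q=\infty$ it reads $\|\nabla\chi\ie\|_{\mathsf L_\infty}\leq C(d\e)^{-1}$, consistent with the convention $1/\infty=0$. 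Since $d\e=o(\eps)$ this is the small quantity we shall feed into the H\"older estimate~\eqref{Hoelder}.

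For $n=2$ the cut-off is logarithmic: on $d\e<|x-x\ie|<\eps^2$ one has $\chi\ie(x)=\dfrac{\ln|x-x\ie|-2\ln\eps}{\ln d\e-2\ln\eps}$, so $|\nabla\chi\ie(x)|=\dfrac{1}{|\ln d\e-2\ln\eps|\,|x-x\ie|}$ on that annulus and $0$ elsewhere. Then
\begin{equation*}
\|\nabla\chi\ie\|_{\mathsf L_2(Y\ie)}^2
=\frac{2\pi}{|\ln d\e-2\ln\eps|^2}\intl_{d\e}^{\eps^2}\frac{\dd r}{r}
=\frac{2\pi(\ln\eps^2-\ln d\e)}{|\ln d\e-2\ln\eps|^2}
=\frac{2\pi}{|2\ln\eps-\ln d\e|}.
\end{equation*}
Since by~\eqref{d} we have $|\ln d\e|^{-1}\leq C\eps^2$, i.e. $|\ln d\e|$ is much larger than $|\ln\eps|$, it follows that $|2\ln\eps-\ln d\e|\geq \tfrac12|\ln d\e|$ for small $\eps$, whence $\|\nabla\chi\ie\|_{\mathsf L_2(Y\ie)}^2\leq C|\ln d\e|^{-1}$ and the second line of~\eqref{chi-est} follows. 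I do not anticipate a genuine obstacle here: the only point requiring a little care is checking that the support of $\nabla\chi\ie$ really lies inside $Y\ie=\square\ie\setminus\overline{D\ie}$ — this needs $2d\e\leq\kappa\eps$ (resp. $\eps^2\leq\kappa\eps$) and $D\ie\subset B(D\ie)=B_{d\e}(x\ie)$, both guaranteed for small $\eps$ by~\eqref{d} and~\eqref{eq:dist.obst}; and in the planar case the sign bookkeeping $\ln\eps^2-\ln d\e>0$, which holds because $\eps^2>d\e$ by~\eqref{d}.
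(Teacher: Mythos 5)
Correct. The paper gives no proof, stating only that the estimates follow ``via a straightforward calculations''; your radial-coordinate computation in the two regimes $n\ge 3$ and $n=2$ is exactly that calculation, and the support and sign checks you flag at the end are the only points requiring any care.
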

 
Now, we choose the largest $p$ for which~\eqref{p} holds:
\begin{gather}\label{pmax}
p:=\frac {2n}{n-4} \text{\, as }n\geq 5,\quad
p:=4\beta^{-1}\text{ with }\beta\in(0, 2)\text{\, as }n=4,\quad
p=\infty\text{\, as }n=2,3.
\end{gather}
As before
\begin{gather}\label{qmax}
q= \Bigl(\frac 12 -\frac 1p \Bigr)^{-1}.
\end{gather}

Plugging the estimates~\eqref{sobolev} and~\eqref{chi-est} into~\eqref{Hoelder} and taking into account~\eqref{d}, \eqref{pmax}--\eqref{qmax}  we arrive easily at 
\begin{gather}\label{J1-est2}
\begin{array}{ll}
  n\geq 5:&
  \ds\Bigl(\suml_{i\in\I}\|(f-f\ie)\nabla \chi\ie\|_{\L(Y\ie)}^2\Bigr)^{1/2}
  \leq C \cdot \frac{d\e} \eps\|f\|_{\mathsf{H}^2(\Omega)},
\\[2ex]
n= 4:&
\ds\Bigl(\suml_{i\in\I}\|(f-f\ie)\nabla \chi\ie\|_{\L(Y\ie)}^2\Bigr)^{1/2}\leq 
C_\beta\cdot \Bigl(\frac{d\e}\eps\Bigr)^{1-\beta}\|f\|_{\mathsf{H}^2(\Omega)}\leq C_\beta\eps^{1-\beta}\|f\|_{\mathsf{H}^2(\Omega)},
\\[2ex]
n= 3:&
\ds\Bigl(\suml_{i\in\I}\|(f-f\ie)\nabla \chi\ie\|_{\L(Y\ie)}^2\Bigr)^{1/2}\leq 
C\cdot \Bigl(\frac{d\e} \eps\Bigr)^{1/2}\|f\|_{\mathsf{H}^2(\Omega)}\leq C\eps \|f\|_{\mathsf{H}^2(\Omega)},
\\[2ex]
n= 2:&
\ds\Bigl(\suml_{i\in\I}\|(f-f\ie)\nabla \chi\ie\|_{\L(Y\ie)}^2\Bigr)^{1/2}\leq 
C|\ln d\e|^{-\frac12}\|f\|_{\mathsf{H}^2(\Omega)}\leq C\eps \|f\|_{\mathsf{H}^2(\Omega)}.
\end{array}
\end{gather}  


Combining~\eqref{J1-est1} and~\eqref{J1-est2} and taking into account
\eqref{h2h2} and the definition of $\delta\e$, we get the estimate
\begin{gather}\label{J1}
\mathcal{J}_{\eps,1}\leq \delta\e\|u\|_{\HS^1}\|f\|_{\HS^2}.
\end{gather}
 
\subsubsection{Estimates for $\mathcal{J}_{\eps,2}$}

One has
\begin{gather}\label{J2.0}
\mathcal{J}_{\eps,2}= \left|\suml_{i\in\I}f\ie(\nabla u, \nabla(H\ie \widehat \chi\ie))_{\L(Y\ie)}+q(u,f)_{\L(\Omega\e)}\right|.
\end{gather}

Besides~\eqref{capacity} (or~\eqref{capacity2} for $n=2$) there is another equivalent 
characterization of the capacity.

\begin{lemma}
  Let $D\subset \R^n$, and let $H$ be the solution of
  either~\eqref{cap-problem} if $n\geq 3$ or~\eqref{cap-problem2} if
  $n=2$.  Then
  \begin{gather}
    \label{cap-nu}
    \capty(D)
    =-\intl_{\partial D} \frac {\partial H}{\partial \nu}\dd s,
  \end{gather}
  where $\nu$ is the outward-pointing unit normal to $\partial D$,
  $\dd s$ is the area measure on $\partial D$.
\end{lemma}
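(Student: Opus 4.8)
The plan is to derive \eqref{cap-nu} from the variational definition of the capacity by integrating by parts via Green's first identity. I first recall that $H$ is harmonic in $\R^n\setminus\overline{D}$ (resp.\ in $B_1\setminus\overline{D}$ when $n=2$) and that the extension of $H$ by the constant $1$ inside $D$ has vanishing gradient there; hence $\capty(D)=\int_{\R^n\setminus\overline{D}}|\nabla H|^2\dd x$ (resp.\ $\capty(D)=\int_{B_1\setminus\overline{D}}|\nabla H|^2\dd x$).

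\textbf{Case $n\geq 3$.} I would fix a large radius $R$ with $\overline{D}\subset B_R$ and apply Green's first identity on the bounded Lipschitz domain $B_R\setminus\overline{D}$. Using $\Delta H=0$ there, that the outward unit normal of $B_R\setminus\overline{D}$ along $\partial D$ equals $-\nu$, and that $H\equiv1$ on $\partial D$, this gives
\[
\int_{B_R\setminus\overline{D}}|\nabla H|^2\dd x
=\intl_{\partial B_R}H\,\frac{\partial H}{\partial r}\dd s-\intl_{\partial D}\frac{\partial H}{\partial\nu}\dd s .
\]
It remains to show that the boundary term over $\partial B_R$ tends to $0$ as $R\to\infty$. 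For this I would invoke the pointwise bounds of Lemma~\ref{lemma-Hest} with $d$ fixed, which give $\abs{H(x)}\leq C R^{2-n}$ and $\abs{\nabla H(x)}\leq C R^{1-n}$ on $\partial B_R$; since $\abs{\partial B_R}=C R^{n-1}$, the integral is $O(R^{2-n})\to 0$. Passing to the limit yields \eqref{cap-nu}.

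\textbf{Case $n=2$.} Here the domain $B_1\setminus\overline{D}$ is already bounded, so Green's first identity applies directly:
\[
\int_{B_1\setminus\overline{D}}|\nabla H|^2\dd x
=\intl_{\partial B_1}H\,\frac{\partial H}{\partial r}\dd s-\intl_{\partial D}H\,\frac{\partial H}{\partial\nu}\dd s .
\]
Now $H\equiv 0$ on $\partial B_1$ annihilates the first integral and $H\equiv 1$ on $\partial D$ turns the second into $-\intl_{\partial D}\frac{\partial H}{\partial\nu}\dd s$, which is \eqref{cap-nu}.

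The only delicate point is the regularity required to legitimize these manipulations: since $\partial D$ is merely Lipschitz and $H$ belongs to $\mathsf{H}^1$ up to $\partial D$, the normal derivative $\partial H/\partial\nu$ and the boundary integrals over $\partial D$ must be read in the weak sense (as the normal-trace duality pairing), or else one first proves the identity for smooth $D$ and then exhausts a general $D$ by smooth domains from outside. I expect this interpretation of the boundary term --- together with, for $n\geq 3$, the bookkeeping needed to discard the flux through $\partial B_R$ via the decay estimates --- to be the only, and rather minor, obstacle.
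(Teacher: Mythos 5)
Your proof is correct and follows essentially the same route as the paper: Green's first identity on $B_R\setminus\overline{D}$ (resp.\ $B_1\setminus\overline{D}$ for $n=2$), using $H\equiv 1$ on $\partial D$ and harmonicity, and then for $n\geq 3$ killing the flux through $\partial B_R$ via the decay bounds of Lemma~\ref{lemma-Hest}, yielding a boundary term of size $O(R^{2-n})$. Your extra remark about interpreting $\partial H/\partial\nu$ weakly for Lipschitz $\partial D$ is a reasonable caveat that the paper does not spell out, but it does not change the argument.
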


\begin{proof}
For $n=2$ the result follows from
\begin{equation*}
  \intl_{\partial D} \frac {\partial H} {\partial \nu}\dd s
  =\intl_{\partial D}\frac {\partial H} {\partial \nu} H\dd s
  =-\intl_{B_1\setminus \overline{D}}|\nabla H|^2\dd x.
\end{equation*}
Here the first equality is due to $H\restr{\partial D}=1$, while
the second one is the Green formula, in which the surface integral over $\partial B_1$ vanishes since $H\restr{\partial B_1}=0$, and the second volume integral vanishes since $\Delta H=0$.

For $n\geq 3$ we proceed as follows. Let $B_R$ be the ball of radius $R>1$ being concentric with the smallest ball containing $D$. One has:
\begin{gather}
  \label{green}
  \intl_{\partial D} \frac {\partial H}{\partial \nu}\dd s
  =\intl_{\partial D}\frac {\partial H}{\partial \nu}H\dd s
  =-\intl_{B_R\setminus \overline{D}}|\nabla H|^2\dd x
  -\intl_{\partial B_R}\frac {\partial H}{\partial \nu}H\dd s
\end{gather}
(in the last integral $\nu$ is the inward-pointing unit normal to $\partial B_R$).
Lemma~\ref{lemma-Hest} implies the estimate 
\begin{gather}
  \label{green+}
  \left|\intl_{\partial B_R}\frac{\partial H}{\partial \nu}H\dd s\right|
  \leq \frac C{R^{n-2}}.
\end{gather}
Passing to the limit $R\to\infty$ in~\eqref{green} and taking into account~\eqref{capacity} and \eqref{green+} we arrive at the required equality~\eqref{cap-nu}.
The lemma is proved.
\end{proof}

Denote $u\ie:=\langle J\e' u\rangle_{\square\ie}$.
Integrating by parts and using~\eqref{cap-nu} we get
\begin{multline}\label{J2.1}
\suml_{i\in\I}f\ie(\nabla u, \nabla(H\ie \widehat \chi\ie))_{\L(Y\ie)}
\\=-\suml_{i\in\I}f\ie( u, \Delta(H\ie \widehat \chi\ie))_{\L(Y\ie)}=
-\suml_{i\in\I}f\ie u\ie( 1, \Delta(H\ie \widehat \chi\ie))_{\L(Y\ie)}+
\zeta\e\\=
\suml_{i\in\I}f\ie u\ie\intl_{\partial D\ie} \frac{\partial H\ie}{\partial\nu} \dd s 
+\zeta\e
= -\suml_{i\in\I}f\ie u\ie \capty(D\e)+\zeta\e,
\end{multline}
where $\zeta\e=\sum_{i\in\I}f\ie (u\ie-u, \Delta(H\ie \widehat
\chi\ie))_{\L(Y\ie)}$. Here we have used the facts that
$\widehat\chi\ie$ vanishes on $\left\{x\in Y\ie:\
  |x-x\ie|\geq d\e+\eps\kappa \right\}$ with all its derivatives,
$u\restr{\partial D\ie}=0$, and $\widehat\chi\ie(x)=1$ in a neighbourhood
of $D\ie$.

The remainder term $\zeta\e$ is small; namely the following estimate holds:
\begin{lemma}
One has:
\begin{gather}\label{zeta}
|\zeta\e|\leq 
C_\kappa\cdot\begin{cases}
\eps\|\nabla u\|_{\L(\Omega\e)}\|f\|_{\L(\Omega)},&n\geq 3,\\[1ex]
 \eps|\ln\eps|\|\nabla u\|_{\L(\Omega\e)}\|f\|_{\L(\Omega)},&n= 2.
\end{cases}
\end{gather}
\end{lemma}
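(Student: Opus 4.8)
The plan is to estimate $\zeta\e = \sum_{i\in\I} f\ie\, (u\ie - u, \Delta(H\ie\widehat\chi\ie))_{\L(Y\ie)}$ cell by cell and then sum using Cauchy--Schwarz. First I would apply the Poincar\'e inequality~\eqref{Poincare} on each cell to control $\|u - u\ie\|_{\L(\square\ie)} \le C\eps\|\nabla u\|_{\L(\square\ie)}$ (recall $u\ie = \langle J\e' u\rangle_{\square\ie}$ and $J\e' u$ extends $u$ by zero, so this mean value makes sense over the full cube). Thus on a single cell, by Cauchy--Schwarz,
\begin{gather*}
\bigl|f\ie\,(u\ie - u, \Delta(H\ie\widehat\chi\ie))_{\L(Y\ie)}\bigr|
\le |f\ie|\cdot C\eps\|\nabla u\|_{\L(\square\ie)}\cdot\|\Delta(H\ie\widehat\chi\ie)\|_{\L(Y\ie)}.
\end{gather*}
The next step is to bound $\|\Delta(H\ie\widehat\chi\ie)\|_{\L(Y\ie)}$. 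Since $\Delta H\ie = 0$ on $Y\ie$ (away from $D\ie$, where $\widehat\chi\ie\equiv 1$ anyway), the Leibniz rule gives $\Delta(H\ie\widehat\chi\ie) = 2\nabla H\ie\cdot\nabla\widehat\chi\ie + H\ie\Delta\widehat\chi\ie$, supported in $\supp(\nabla\widehat\chi\ie)\subset\{d\e+\kappa\eps/2 \le |x-x\ie|\le d\e+\kappa\eps\}$. On this annulus I would use the pointwise bounds of Lemma~\ref{lemma-Hest}: $|H\ie|\le C\tau_n(d\e)/\tau_n(\eps)$ and $|\nabla H\ie|\le C\tau_n(d\e)\eps^{-(n-1)}$ for $n\ge 3$ (with the logarithmic analogue for $n=2$), together with $|\nabla\widehat\chi\ie|\le C\kappa^{-1}\eps^{-1}$ and $|\Delta\widehat\chi\ie|\le C\kappa^{-2}\eps^{-2}$. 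Since the annulus has volume $\le C(\kappa\eps)\cdot\eps^{n-1} = C_\kappa\eps^n$, and since $\tau_n(d\e)\le C\eps^n$ by~\eqref{d}, a short computation yields $\|\Delta(H\ie\widehat\chi\ie)\|_{\L(Y\ie)}^2 \le C_\kappa\eps^{n}\cdot(\eps^{-n})\cdot o(\eps^2)$-type bounds; carefully tracking exponents one finds $\|\Delta(H\ie\widehat\chi\ie)\|_{\L(Y\ie)} = o(\eps^{n/2})$ for $n\ge 3$, and the extra $|\ln\eps|$ factor appears for $n=2$ because $\tau_2(d\e)/\tau_2(\eps) = |\ln d\e|^{-1}|\ln\eps|$ together with the $\capty$ smallness forces a logarithmic loss.

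Then I would combine with the Cauchy--Schwarz bound $|f\ie|\le\eps^{-n/2}\|f\|_{\L(\square\ie)}$ (this is~\eqref{1.5}). On each cell we get a factor $\eps^{-n/2}\|f\|_{\L(\square\ie)}\cdot\eps\|\nabla u\|_{\L(\square\ie)}\cdot o(\eps^{n/2})$, and the powers of $\eps$ balance to leave $o(\eps)$ (resp.\ $O(\eps|\ln\eps|)$ for $n=2$) times $\|f\|_{\L(\square\ie)}\|\nabla u\|_{\L(\square\ie)}$. Summing over $i\in\I$ and applying discrete Cauchy--Schwarz,
\begin{gather*}
|\zeta\e| \le C_\kappa\,\gamma_n(\eps)\sum_{i\in\I}\|f\|_{\L(\square\ie)}\|\nabla u\|_{\L(\square\ie)}
\le C_\kappa\,\gamma_n(\eps)\,\|f\|_{\L(\Omega)}\,\|\nabla u\|_{\L(\Omega\e)},
\end{gather*}
with $\gamma_n(\eps) = \eps$ for $n\ge 3$ and $\gamma_n(\eps) = \eps|\ln\eps|$ for $n=2$, which is exactly~\eqref{zeta}. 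Here I use that the cubes $\square\ie$ are disjoint and contained in $\Omega$, so $\sum_i\|f\|_{\L(\square\ie)}^2\le\|f\|_{\L(\Omega)}^2$ and likewise for $\nabla u$ on $\Omega\e$ (noting $Y\ie\subset\Omega\e$).

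The main obstacle I anticipate is the careful bookkeeping of $\eps$-powers in the $n=2$ case, where one must use the capacity normalization~\eqref{q}--\eqref{O} (i.e.\ $|\ln d\e|^{-1} = \capty(B\e)/(2\pi) \le C\eps^2$) rather than just~\eqref{d} to extract enough smallness, and where the ratio $|\ln d\e|^{-1}|\ln\eps|$ in the pointwise bound for $H\ie$ is what produces the $|\ln\eps|$ in the final estimate. For $n\ge 3$ there is a genuine subtlety too: naively $\|H\ie\Delta\widehat\chi\ie\|_{\L}$ and $\|\nabla H\ie\cdot\nabla\widehat\chi\ie\|_{\L}$ each look borderline, and one must use the decay $\tau_n(d\e)\le C\eps^n$ (which is strictly stronger than what is needed for $\capty(D\e)\eps^{-n}=O(1)$) to gain the little-$o$; this is precisely why $\gamma\e = o(1)$ rather than $O(1)$ and why the constant depends on $\kappa$ through the width of the annulus where $\widehat\chi\ie$ transitions.
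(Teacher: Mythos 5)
Your proposal follows essentially the same route as the paper: expand $\Delta(H\ie\widehat\chi\ie)=2\nabla H\ie\cdot\nabla\widehat\chi\ie+H\ie\Delta\widehat\chi\ie$ on the annulus where $\nabla\widehat\chi\ie\neq 0$, invoke the pointwise bounds of Lemma~\ref{lemma-Hest} together with $|\nabla\widehat\chi\ie|\lesssim\kappa^{-1}\eps^{-1}$, $|\Delta\widehat\chi\ie|\lesssim\kappa^{-2}\eps^{-2}$ and $\tau_n(d\e)\leq C\eps^n$, then control $\|u-u\ie\|_{\L(\square\ie)}$ via Poincar\'e, $|f\ie|$ via Cauchy--Schwarz, and sum with discrete Cauchy--Schwarz. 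The only small inaccuracy is that the paper's pointwise bound $|\Delta(H\ie\widehat\chi\ie)|\leq C_\kappa$ (resp.\ $C_\kappa|\ln\eps|$) yields $\|\Delta(H\ie\widehat\chi\ie)\|_{\L(Y\ie)}=O(\eps^{n/2})$, not $o(\eps^{n/2})$ as you claim, so the final bound is $O(\eps)$ rather than $o(\eps)$ for $n\geq 3$ --- but this is exactly what the lemma asserts, so the conclusion is unaffected.
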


\begin{proof}
At first we consider the case $n\geq 3$.
Since $\Delta H\ie=0$ we have  
\begin{gather*}\Delta(H\ie \widehat \chi\ie)=2\nabla H\ie\cdot\nabla \widehat \chi\ie+H\ie \Delta\widehat \chi\ie,\\ \supp(\Delta(H\ie \widehat \chi\ie))\subset\left\{x\in Y\ie:\ d\e+{\kappa\eps}/2  \leq |x-x\ie| \leq  d\e+ \kappa\eps   \right\},
\end{gather*}
hence, due to Lemma~\ref{lemma-Hest},~\eqref{d} and $|\nabla\widehat \chi\ie(x)|\leq C\kappa^{-1}\eps^{-1}$, $|\Delta\widehat \chi\ie(x)|\leq C\kappa^{-2}\eps^{-2}$, we get
\begin{gather}\label{Delta-est}
  |(\Delta(H\ie \widehat \chi\ie))(x)|
  \leq C\left(\frac {(d\e)^{n-2}}{\eps^{n-1}}\cdot \frac 1{\kappa\eps}
    +\frac{(d\e)^{n-2}}{\eps^{n-2}} \cdot \frac 1{\kappa^2\eps^2}\right)\leq C_\kappa,
\end{gather}
Then, using  the Cauchy-Schwarz inequality,~\eqref{Poincare} and~\eqref{Delta-est}, we obtain
\begin{gather*}
|\zeta\e|\leq C_\kappa\left(\suml_{i\in\I}\|J\e'u-u\ie\|_{\L(\square\ie)}\right)^{1/2}\left(\suml_{i\in\I}|f\ie|^2\eps^n\right)^{1/2}\leq C_\kappa\eps\|\nabla u\|_{\L(\Omega\e)}\|f\|_{\L(\Omega)}.
\end{gather*}

In the case $n=2$ Lemma~\ref{lemma-Hest} gives 
\begin{gather*}
|(\Delta(H\ie \widehat \chi\ie))(x)|\leq C_\kappa|\ln\eps|,
\end{gather*}
and, via the same arguments as in the case $n\geq 3$, we obtain
\begin{gather*}
|\zeta\e|\leq C_\kappa\eps|\ln\eps|\|\nabla u\|_{\L(\Omega\e)}\|f\|_{\L(\Omega)}.
\end{gather*}
The lemma is proved.
\end{proof}

We need also the estimate for $\H_0(\Omega)$-functions in a neighbourhood of $\partial\Omega$.
\begin{lemma}\label{lemma-te} 
Let $T\e=\Omega\e\setminus\overline{\cup_{i\in\I}Y\ie}$.
One has for each $f\in \H_0(\Omega)$:
\begin{gather}\label{te}
\|f\|_{\L(T\e)}\leq C\eps\|\nabla f\|_{\L(\Omega)}.
\end{gather}
\end{lemma}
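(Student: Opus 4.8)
The plan is to show that $T\e$ is a boundary strip of $\Omega$ of width $\mathcal O(\eps)$ and then to invoke a one-dimensional Friedrichs inequality in collar coordinates along $\partial\Omega$. First I would observe that, unwinding the definitions (and using that each $D\ie$ sits well inside its cell), $T\e=\Omega\setminus\bigcup_{i\in\I}\overline{\square\ie}$: a point of $\Omega\e$ not covered by the closed cells over $\I$ lies in no $\overline{D\ie}$ either, so it is a point $x\in\Omega$ contained in the closure of some cell $\square_{j\eps}$ with $j\notin\I$, i.e.\ with $\square_{j\eps}\not\subset\Omega$; since $\square_{j\eps}$ meets $\Omega$ (near $x$) but is not contained in it, it meets $\partial\Omega$, whence $\dist(x,\partial\Omega)\le\diam(\square_{j\eps})=\sqrt n\,\eps$. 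Consequently $T\e\subset U\e:=\{x\in\Omega:\dist(x,\partial\Omega)\le\sqrt n\,\eps\}$, and it suffices to prove \eqref{te} with $T\e$ replaced by $U\e$.

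Next, for $\eps$ small enough that $\sqrt n\,\eps<\theta_\Omega$, I would set up collar coordinates via the map $\Phi(y,t):=y+t\nu(y)$ from~\eqref{tau}. By the injectivity assumption and the fact that the reach of $\partial\Omega$ is at least $\theta_\Omega$, $\Phi$ is a bijection of $\partial\Omega\times[0,\sqrt n\,\eps]$ onto $\overline{U\e}$ (every point of $\overline{U\e}$ has a unique nearest point on $\partial\Omega$), and by the $\mathsf C^2$-smoothness together with the uniform regularity of $\partial\Omega$ in the sense of Browder, $\Phi$ is a $\mathsf C^1$-diffeomorphism whose Jacobian is comparable to $1$ with constants depending only on $\Omega$, uniformly in $(y,t)$ and in $\eps$.

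Then I would run the one-dimensional argument. By density it is enough to treat $f\in\mathsf C_c^\infty(\Omega)$. Writing $\tilde f:=f\circ\Phi$, the vanishing of $f$ on $\partial\Omega$ gives $\tilde f(y,0)=0$, so by the fundamental theorem of calculus and the Cauchy--Schwarz inequality
\begin{equation*}
|\tilde f(y,t)|^2=\Bigl|\intl_0^t\partial_s\tilde f(y,s)\dd s\Bigr|^2\le \sqrt n\,\eps\intl_0^{\sqrt n\,\eps}|\partial_s\tilde f(y,s)|^2\dd s .
\end{equation*}
Since the derivative of $s\mapsto\Phi(y,s)$ is the unit vector $\nu(y)$, we have $|\partial_s\tilde f(y,s)|=|(\nabla f)(\Phi(y,s))\cdot\nu(y)|\le|(\nabla f)(\Phi(y,s))|$. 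Integrating the displayed estimate over $t\in[0,\sqrt n\,\eps]$ and over $y\in\partial\Omega$, and using that the Jacobian of $\Phi$ is comparable to $1$ on both sides, yields
\begin{equation*}
\|f\|^2_{\L(U\e)}\le C\,n\,\eps^2\,\|\nabla f\|^2_{\L(U\e)}\le C\,\eps^2\,\|\nabla f\|^2_{\L(\Omega)},
\end{equation*}
with $C$ depending only on $\Omega$ and $n$; this is \eqref{te} (the finitely many $\eps$ with $\sqrt n\,\eps\ge\theta_\Omega$ are irrelevant for $\eps\to0$ and absorbed into $C$).

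The one genuinely non-routine ingredient is the middle step: knowing that the normal map $\Phi$ is a diffeomorphism onto the boundary strip with Jacobian (and metric distortion) controlled \emph{uniformly in $\eps$}. This is exactly where the injectivity hypothesis~\eqref{tau} and the $\mathsf C^2$/uniform-regularity assumptions on $\partial\Omega$ enter; the rest is the elementary one-dimensional Friedrichs inequality. An alternative would be to argue cell-by-cell, applying a Friedrichs-type inequality on each $\square\ie\cap\Omega$ with the Dirichlet condition imposed only on $\square\ie\cap\partial\Omega$, but then keeping the Friedrichs constant uniform in $i$ and $\eps$ becomes the crux, and the collar picture settles it cleanly.
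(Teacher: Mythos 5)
Your proof is correct, and it takes a genuinely different route from the paper's. After the shared geometric reduction (both proofs observe $T_\eps$ lies in the one-sided tubular neighbourhood $\widehat\Omega_\eps$ of $\partial\Omega$ of width $\sqrt n\,\eps$), the paper cites a spectral asymptotics result of Krej\v{c}i\v{r}\'{\i}k~\cite{Kr14} for the first eigenvalue of the Laplacian on $\widehat\Omega_\eps$ with Dirichlet condition on $\partial\Omega$ and Neumann on the inner boundary, namely $\inf\sigma(-\Delta_{\widehat\Omega_\eps})=\bigl(\pi/(2\sqrt n\,\eps)\bigr)^2+O(\eps^{-1})$, and then invokes the min-max principle to get $\|f\|^2_{\L(\widehat\Omega_\eps)}\le C\eps^2\|\nabla f\|^2_{\L(\widehat\Omega_\eps)}$. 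You instead run the elementary one-dimensional Friedrichs argument in collar coordinates, integrating $\partial_s\tilde f$ from the zero trace on $\partial\Omega$ and controlling the change of variables by the uniform Jacobian bound. Your argument is self-contained and proves exactly the order-of-magnitude bound that is needed, without relying on the sharp two-term eigenvalue asymptotics; conversely, the paper's citation-based route is shorter to write and delegates the uniform-in-$\eps$ control of the collar geometry to~\cite{Kr14}. Both hinge on the same hypotheses on $\partial\Omega$ (injectivity of~\eqref{tau}, $\mathsf C^2$-regularity, and Browder uniform regularity to make the constants $\eps$-independent for a possibly non-compact boundary), and your explicit identification $T_\eps=\Omega\setminus\bigcup_{i\in\I}\overline{\square_{i\eps}}\subset U_\eps$ matches the inclusion $T_\eps\subset\widehat\Omega_\eps$ used in the paper.
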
 

\begin{proof}
We denote   
\begin{equation*}
\widehat{\Omega}\e=\{y\in\R^n:\ y= x + t \nu(x),\ x\in\partial\Omega,\ t\in[0,\sqrt{n}\eps)\}
\end{equation*}
(recall that $\nu \colon \partial\Omega\to \mathbb{S}^{n-1}$ is a unit inward-pointing normal vector field on $\partial\Omega$).
Note, that $\sqrt{n}$ is the length of the diagonal of the cube $\square$. Taking this into account one can easily deduce from the definition of the set $\I$ that $T\e\subset \widehat{\Omega}\e$.
  
Let $\Delta_{\widehat{\Omega}\e}$ be the Laplace operator on $\widehat{\Omega}\e$ subject to the Dirichlet conditions on $\partial\Omega$ and the Neumann conditions on $\partial \widehat{\Omega}\e\setminus\partial\Omega$. 
One has the following asymptotic equality (see ~\cite{Kr14}):
\begin{equation*}
  \inf \sigma(-\Delta_{\widehat{\Omega}\e})
  =\left(\frac \pi {2 \sqrt{n}\eps}\right)^2+\frac{C_\Omega} \eps +\mathcal{O}(1),\end{equation*}
where the constant $C_\Omega$ depends on the principal curvatures of $\partial\Omega$.
Note, that the result of~\cite{Kr14} is obtained under the assumption that the map~\eqref{tau}  is injective on $\partial\Omega \times [0, \sqrt{n}\eps]$, that indeed holds true provided $\eps$ is small enough, namely  
$\eps< \theta_\Omega/\sqrt{n}$.

Hence, using the minimax principle, we get the inequality
\begin{gather}\label{te+}
\|f\|^2_{\L(\widehat{\Omega}\e)}\leq C\eps^2\|\nabla f\|^2_{\L(\widehat{\Omega}\e)},
\end{gather}
which holds for each $f\in \H(\widehat{\Omega}\e)$ with $f\restr{\partial\Omega}=0$. 
Obviously,~\eqref{te} follows from~\eqref{te+}.
\end{proof}
 
Using~\eqref{Poincare},~\eqref{J2.1},~\eqref{te} we obtain from
\eqref{J2.0}:
\begin{multline*}
\mathcal{J}_{\eps,2}\leq |\zeta\e|+
\left|\suml_{i\in\I}f\ie u\ie\capty(D\e)-q\suml_{i\in\I}(f,u)_{\L(Y\ie)}\right|+q\left|(f,u)_{\L(T\e)}\right|\\\leq
|\zeta\e|+
\left|(\capty(D\e)\eps^{-n}-q)\suml_{i\in\I}\eps^n f\ie u\ie\right|+q\left|\suml_{i\in\I}(f\ie-f,u)_{\L(Y\ie)} \right|+q\left|(f,u)_{\L(T\e)}\right|\\\leq
|\zeta\e| +|\capty(D\e)\eps^{-n}-q|\|f\|_{\L(\Omega\e)}\|u\|_{\L(\Omega\e)}+
q\left(\suml_{i\in\I}\|f-f\ie\|_{\L(\square\ie)}\right)^{1/2}\|u\|_{\L(\Omega\e)}\\+q\|f\|_{\L(T\e)}\|u\|_{\L(T\e)}\leq 
|\zeta\e| +|\capty(D\e)\eps^{-n}-q|\|f\|_{\L(\Omega\e)}\|u\|_{\L(\Omega\e)}\\+
C\eps\|\nabla f\|_{\L(\Omega)}\|u\|_{\L(\Omega\e)}+q\|f\|_{\L(T\e)}\|u\|_{\L(T\e)},
\end{multline*}
hence, taking into account~\eqref{zeta}, we  get
\begin{gather}\label{J2}
\mathcal{J}_{\eps,2}\leq \delta\e\|u\|_{\HS^1}\|f\|_{\HS^1}.
\end{gather}

Combining estimates~\eqref{J1} and~\eqref{J2} we obtain~\eqref{cond5} with $k=2$.

Thus, we have checked the fulfilment of conditions~\eqref{cond1a}--\eqref{cond5}, hence we immediately get  Theorems~\ref{thm:main:1}--\ref{thm:main:3}.

\begin{remark}\label{remark-smooth}

It is evident from the proof that the assumptions on $\partial\Omega$ can be 
weakened. We use them twice: to guarantee the fulfilment of~\eqref{h2h2} (elliptic regularity) and to prove estimate~\eqref{te+}, where we utilize the result from~\cite{Kr14}. 
It is well-known, that the elliptic regularity is still valid under less restrictive assumptions, for example, if $\partial\Omega$ is compact and belongs to $\mathsf{C}^{1,1}$ class or $\Omega$ is a convex domain with Lipschitz boundary  (see, e.g.,~\cite[Theorems~2.2.2.3 and 3.2.1.2]{Gr85}). Apparently,  inequality~\eqref{te+} can be proved for Lipschitz domains under additional restrictions on  principal curvatures.
\end{remark}

\subsection{Proof of Theorem~\ref{thm:main:4}}

We will use the results of~\cite{IOS89}. 
Let $\HS\e$ and
$\HS$ be separable Hilbert spaces, and
$B\e \colon \HS\e\to \HS\e,\
B \colon \HS \to \HS$ be linear
compact self-adjoint positive operators.
We denote by
 $\{\mu_{k,\eps}\}_{k\in\N}$ and
$\left\{\mu_k\right\}_{k\in\N}$ the eigenvalues of the
operators $B\e$ and $B$, respectively,
being
renumbered in the descending order and with account of their
multiplicity. 

\begin{theorem}[\cite{IOS89}]
  \label{thm:IOS}
  Assume that the following conditions $A_1-A_4$ hold:

{$A_1.$} The linear bounded operator $J\e \colon \HS\to
\HS\e$ exists such that for each $f\in \HS$
\begin{gather*}
\|J\e
f\|_{\HS\e} \to
\|f\|_{\HS}\text{ as }\eps\to 0.
\end{gather*}

{$A_2.$} The norms
$\|B\e\|_{\mathcal{L}(\HS\e)}$ are bounded
uniformly in $\eps$.

{$A_3.$} For any $f\in\HS$: $\|B\e J\e
f-J\e B f\|_{\HS\e}\to 0 \text{ as }\eps\to 0$.

{$A_4.$} For any family $\{f\e\in \HS\e\}_\eps$ with $\sup_{\eps}
\|f\e\|_{\HS\e}<\infty$ there exist a sequence $(\eps_m)_m$ and $w\in
\HS$ such that $ \|B_{\eps_m} f_{\eps_m}-J_{\eps_m}
w\|_{\HS_{\eps_m}}\to 0$ and $ \eps_m\to 0$ as $m \to \infty$.

Then for any $k\in\mathbb{N}$ we have
\begin{equation*}
  |\mu_{k,\eps}-\mu_k|\leq 
  C\e \sup\limits_f\|B\e J\e f-J\e B f\|_{\HS\e},
\end{equation*}
where $|C\e|\leq C$, $\lim_{\eps\to 0}C\e=1$, the supremum is taken over all
$f\in\HS$ belonging to the eigenspace associated with $\mu_k$ and
satisfying $\|f\|_\HS=1$.
\end{theorem}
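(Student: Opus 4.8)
The plan is to recover this statement from the Courant--Fischer (max--min) characterisation of the eigenvalues of a positive compact operator, in two stages: a \emph{qualitative} stage establishing $\mu_{j,\eps}\to\mu_j$ for every $j$, and a \emph{quantitative} stage localising the eigenvalues that cluster near $\mu_k$. Fix $k$; since $B$ is compact, positive and $\mu_k>0$ (automatic if $B$ is injective), the value $\mu_k$ has finite multiplicity $m$, and I write $\{k_0,\dots,k_0+m-1\}$ for the block of indices $j$ with $\mu_j=\mu_k$, so that $\mu_{k_0-1}>\mu_k>\mu_{k_0+m}$ (the first inequality being vacuous if $k_0=1$), and $N:=\ker(B-\mu_k)$, $\dim N=m$. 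I abbreviate
\[
\rho\e:=\sup\{\|B\e J\e f-J\e Bf\|_{\HS\e}\ :\ f\in N,\ \|f\|_{\HS}=1\},
\]
which is precisely the quantity on the right-hand side of the asserted estimate. Two preliminary observations, both routine: $\sup_\eps\|J\e\|<\infty$ (by $A_1$ and the uniform boundedness principle), and $\rho\e\to0$ (by $A_3$ and $A_2$, since $N$ is finite-dimensional and $f\mapsto\|B\e J\e f-J\e Bf\|_{\HS\e}$ is Lipschitz on the unit sphere of $N$ with an $\eps$-independent constant).

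For the qualitative stage I would first prove $\liminf_{\eps\to0}\mu_{j,\eps}\ge\mu_j$ by using, in the max--min principle for $B\e$, the $j$-dimensional test space $J\e(L_j)$ where $L_j=\mathrm{span}\{f_1,\dots,f_j\}$ and $Bf_a=\mu_af_a$: by $A_1$ and polarisation the Gram matrix $\big(\langle J\e f_a,J\e f_b\rangle_{\HS\e}\big)_{a,b}$ converges to $\mathrm{I}$, and $B\e J\e f_a=\mu_aJ\e f_a+o(1)$ by $A_3$, so $(B\e u,u)_{\HS\e}\ge(\mu_j-o(1))\|u\|_{\HS\e}^2$ for every $u\in J\e(L_j)$. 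Next I would prove $\limsup_{\eps\to0}\mu_{j,\eps}\le\mu_j$ by contradiction: if along a subsequence $B\e$ had $j$ eigenvalues $\ge\mu_j+\delta$ for some $\delta>0$, I would take the corresponding orthonormal eigenvectors $g_{1,\eps},\dots,g_{j,\eps}$ and apply $A_4$ along a diagonal subsequence (dividing each by its eigenvalue, which lies in $[\mu_j+\delta,\sup_\eps\|B\e\|]$, hence is bounded away from $0$) to obtain in the limit orthonormal vectors $v_1,\dots,v_j\in\HS$ which, by passing the eigenvalue equation to the limit with the help of $A_1$--$A_3$, are eigenvectors of $B$ with eigenvalues $\ge\mu_j+\delta$ --- impossible, as $B$ has at most $j-1$ eigenvalues exceeding $\mu_j$. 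Together these give $\mu_{j,\eps}\to\mu_j$ for all $j$; fixing $c>0$ with $[\mu_k-2c,\mu_k+2c]\cap\sigma(B)=\{\mu_k\}$, it follows that for all small $\eps$ the operator $B\e$ has exactly $m$ eigenvalues in $(\mu_k-c,\mu_k+c)$, namely $\mu_{k_0,\eps},\dots,\mu_{k_0+m-1,\eps}$.

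For the quantitative stage, note that for $f\in N$ one has $\|B\e J\e f-\mu_kJ\e f\|_{\HS\e}=\|B\e J\e f-J\e Bf\|_{\HS\e}\le\rho\e\|f\|_{\HS}$. Writing $Q\e^\eta:=\mathbf{1}_{(\mu_k-\eta,\mu_k+\eta)}(B\e)$ for the spectral projection of $B\e$, the spectral theorem gives $\|(\mathrm{I}-Q\e^\eta)J\e f\|_{\HS\e}\le\eta^{-1}\|(B\e-\mu_k)J\e f\|_{\HS\e}\le\eta^{-1}\rho\e\|f\|_{\HS}$, while $A_1$ (with $\dim N<\infty$, again via the Gram matrix) gives $\|J\e f\|_{\HS\e}\ge(1-s\e)\|f\|_{\HS}$ for some $s\e\to0$. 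Hence, once $\eta>\rho\e/(1-s\e)$, the linear map $Q\e^\eta\circ J\e\colon N\to\mathrm{ran}\,Q\e^\eta$ is injective, so $B\e$ has at least $m$ eigenvalues in $(\mu_k-\eta,\mu_k+\eta)$. For small $\eps$ one may choose $\rho\e/(1-s\e)<\eta<c$; comparing with the qualitative stage, the $m$ eigenvalues of $B\e$ in $(\mu_k-c,\mu_k+c)$ --- that is, the whole cluster $\mu_{k_0,\eps},\dots,\mu_{k_0+m-1,\eps}$ --- must lie in $(\mu_k-\eta,\mu_k+\eta)$. Letting $\eta\downarrow\rho\e/(1-s\e)$ yields $|\mu_{k,\eps}-\mu_k|\le(1-s\e)^{-1}\rho\e$ for every $k$ in the block, which is exactly the claim with $C\e:=(1-s\e)^{-1}$: indeed $C\e\to1$ and $|C\e|\le C$ for small $\eps$.

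The main obstacle is the compactness hypothesis $A_4$: it is the sole non-soft ingredient, needed (only) for the $\limsup$ bound, and without it eigenvalues of $B\e$ could simply run away from $\sigma(B)$. The one point in the quantitative stage that needs care is the realisation that it suffices to know $Q\e^\eta\circ J\e$ is \emph{injective} on the $m$-dimensional space $N$ --- a dimension count, not a quantitative stability estimate --- to force at least $m$ eigenvalues into the window of half-width $(1+o(1))\rho\e$; the promotion of ``at least $m$'' to ``the whole cluster'' then comes for free from the already-established fact that only $m$ eigenvalues of $B\e$ are anywhere near $\mu_k$. The remaining points (uniform boundedness of $\|J\e\|$, $\rho\e\to0$, convergence of the Gram matrices to $\mathrm{I}$, and the legitimacy of the diagonal-subsequence extraction in the use of $A_4$) are routine.
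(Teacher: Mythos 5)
This theorem is quoted in the paper from \cite{IOS89} and no proof is given there, so the only meaningful comparison is with the classical argument of Iosif'yan--Oleinik--Shamaev; your proposal is a correct, self-contained reconstruction of essentially that argument. The two-stage structure is exactly the standard one: first the qualitative convergence $\mu_{j,\eps}\to\mu_j$ (lower bound via max--min on the transplanted test space $J_\eps(L_j)$ using the convergence of the Gram matrices from $A_1$ and the almost-intertwining from $A_3$; upper bound via the compactness hypothesis $A_4$, which, as you correctly identify, is the sole ingredient preventing eigenvalues of $B_\eps$ from escaping upwards), and then the quantitative localisation of the $m$-fold cluster near $\mu_k$ using the almost-eigenvector estimate $\|B_\eps J_\eps f-\mu_k J_\eps f\|_{\HS_\eps}\le\rho_\eps\|f\|_{\HS}$ for $f$ in the eigenspace $N$, combined with the spectral-projection bound $\|(\mathrm I-Q_\eps^\eta)J_\eps f\|\le\eta^{-1}\rho_\eps\|f\|$ and the injectivity/dimension count that forces at least $m$ eigenvalues into the window of half-width $(1+o(1))\rho_\eps$. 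All the steps check out, including the passage to the limit in the eigenvalue equation (which needs $A_2$ to control $B_\eps(g_{a,\eps}-J_\eps v_a)$, and $A_1$ again to conclude $Bv_a=\lambda_a v_a$ from $\|J_\eps(Bv_a-\lambda_a v_a)\|\to0$). Two minor caveats, neither of which affects the application in the paper: your argument needs $\mu_k>0$ (so that a spectral gap $[\mu_k-2c,\mu_k+2c]\cap\sigma(B)=\{\mu_k\}$ with $\mu_k-2c>0$ exists and $0\notin(\mu_k-\eta,\mu_k+\eta)$), which you flag and which holds here since $B=(\A+\Id)^{-1}$ is injective; and the final estimate with $C_\eps=(1-s_\eps)^{-1}$ is established only for sufficiently small $\eps$, which is how the statement is meant and used.
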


We apply this theorem with $B\e=(\A\e+I)^{-1}$, $B=(\A+I)^{-1}$.
These operators are positive, self-adjoint and compact (recall that
$\Omega$ is a bounded domain here), moreover
$\|B\e\|_{\mathcal{L}(\HS\e)}\leq 1$. Thus condition~$A_2$ is
fulfilled. We choose the operator $J\e$ by~\eqref{J}; due
to~\eqref{totalD} condition~$A_1$ is valid. By
Theorem~\ref{thm:main:1} condition~$A_3$ holds as well. Finally, since
$\|B_\eps\|_{\mathcal{L}(\HS\e)} \le 1$, the set $\{\|B\e
f\e\|_{\H(\Omega\e)}\}_\eps$ is also bounded.  Then the sequence
$\{J'\e B\e f\e\}_\eps$ is bounded in $\H(\Omega)$ (recall that the
operator $J\e'$ is defined in~\eqref{J'}), and by Rellich's embedding
theorem it is compact in $\L(\Omega)$ provided $\Omega$ is bounded.
Thus there exist $w\in \L(\Omega)$ and a sequence $(\eps_m)_m$ such
that $\|J'_{\eps_m} B_{\eps_m} f_{\eps_m}-w\|_{\L(\Omega)}\to 0$ and
$\eps_m\to 0$ as $m \to \infty$, hence we immediately obtain
Condition~$A_4$.

Combining Theorems~\ref{thm:main:1} and~\ref{thm:IOS} we arrive at the
estimate
\begin{equation}\label{lambda-est1}
\left|\mu_{k,\eps}-\mu_k\right|\leq 4C\e\delta\e,
\end{equation} 
where $|C\e|\leq C$, $\lim_{\eps\to 0}C\e=1$ and $\delta\e$ is given in \eqref{delta}. 
Since $\mu_{k,\eps}=(\lambda_{k,\eps}+1)^{-1}$, and $\mu_k=(\lambda_{k}+1)^{-1}$, \eqref{lambda-est1} is equivalent to \eqref{spectrum2}.

Finally, we observe that for each fixed $k\in\mathbb{N}$
\begin{gather}\label{lambda-est2}
\lambda_{k,\eps}\leq C_k
\end{gather}
that follows  from Theorem~\ref{thm:abstract:4} and
Remark~\ref{remark-hausd} (otherwise, we will easily obtain a
contradiction with Condition~(ii) from this
remark). \eqref{spectrum2}, \eqref{lambda-est2} imply \eqref{spectrum1}. Theorem~\ref{thm:main:4} is proved.



\providecommand{\bysame}{\leavevmode\hbox to3em{\hrulefill}\thinspace}
\providecommand{\MR}{\relax\ifhmode\unskip\space\fi MR }
\providecommand{\MRhref}[2]{%
  \href{http://www.ams.org/mathscinet-getitem?mr=#1}{#2}
}
\providecommand{\href}[2]{#2}

\end{document}